\documentclass[11pt,a4paper]{amsart}

\usepackage[backend=biber,style=alphabetic,maxbibnames=50]{biblatex}
\usepackage{color}
\usepackage{xcolor}

\usepackage[utf8]{inputenc}
\usepackage[T1]{fontenc} 
\usepackage{geometry} 
\usepackage[french, english]{babel}
\usepackage{amsmath}
\usepackage{amsfonts}
\usepackage{amsthm}

\usepackage{amscd}
\usepackage{hyperref}
\usepackage{xcolor}
\usepackage{enumitem}

\usepackage{graphicx}

\usepackage{pstricks,pst-plot}

   \usepackage{amssymb}
   \usepackage{verbatim}
   \newif\ifpdf
   \ifpdf
     \usepackage[pdftex]{graphicx}
     \usepackage[pdftex]{hyperref}
   \else
     \usepackage{graphicx}
   \fi

\usepackage[all]{xy}

\newcommand{\R}{\mathbb{R}}

\newcommand{\N}{\mathbb{N}}

\newcommand{\C}{\mathbb{C}}

\newtheorem{thm}{Theorem}[section]
\newtheorem{prop}[thm]{Proposition}
\newtheorem{coro}[thm]{Corollary}
\newtheorem{defi}[thm]{Definition}
\newtheorem{lem}[thm]{Lemma}

\newtheorem{rem}[thm]{Remark}

\newcommand{\limn}{\lim_{n \rightarrow \infty}}

\newcommand{\bcal}{\mathcal{B}}
\newcommand{\ptwo}{\mathbb{P}^2}

\newcommand{\re}{\mathrm{Re}}
\newcommand{\im}{\mathrm{Im}}

\newcommand{\eps}{\varepsilon}

\author{Matthieu Astorg}
\address{Université d’Orléans, Institut Denis
Poisson, UMR CNRS 7013, 45067 Orléans Cedex 2, France}
\email{matthieu.astorg@univ-orleans.fr}
\author{Fabrizio Bianchi}
\address{Dipartimento di Matematica, Università di Pisa, Largo Bruno Pontecorvo 5, 56127 Pisa, Italy}
 \email{fabrizio.bianchi$@$unipi.it}

\title{Non-autonomous parabolic implosion}

\begin{document}

\maketitle
\selectlanguage{english}

\begin{abstract}
We study parabolic implosion in a 
general
non-autonomous setting. 
Let $f(w)=w+w^2+O(w^3)$ be a holomorphic germ tangent to the identity.
We consider
the iteration of  non-autonomous perturbations  of the form
\[
w_{j+1}=f(w_j)+\varepsilon_{j,n}^2.
\]
We show that, when the 
$\varepsilon_{j,n}^2$'s satisfy a 
Lavaurs-type condition, the element 
$w_n$
 can be described by means of a suitable Lavaurs map
$L_{u_n}$,
whose 
phase $u_n$ 
is an explicit function 
of the perturbation parameters. In particular, whenever $u_n\to u\in \mathbb C$,
the non-autonomous dynamics converges locally uniformly on compact subsets of the parabolic basin
to the corresponding Lavaurs map $L_u$.

Our study
 provides a general description of additive non-autonomous parabolic
implosion and yields several
 deterministic and random convergence results as
corollaries, as well as 
	a unified proof of several previous results.
	As an application, we also
	obtain strong discontinuity results
	for the Julia sets of fibered holomorphic endomorphisms of $\mathbb P^2(\mathbb C)$.
\end{abstract}

\section{Introduction}

\subsection{Parabolic points and implosion}
Parabolic fixed points are among the main sources of bifurcations in one--dimensional holomorphic
dynamics; we refer to \cite{milnor2011dynamics,carleson2013complex}
 for background.  Let
\[
f(w)=w+w^{2}+O(w^{3})
\]
be a holomorphic germ tangent to the identity at the origin.  The local dynamics of $f$ is described
by attracting and repelling Fatou coordinates and the associated horn maps, which also encode the
analytic classification of simple parabolic germs (\'Ecalle--Voronin theory
\cite{Ecalle85-tome3,Voronin81analytic}),
 see e.g.
\cite{DudkoSauzin2014, DudkoSauzin2015}.

In the classical \emph{autonomous} setting, parabolic implosion concerns the limiting dynamics of
suitably rescaled perturbations of $f$.  Following the seminal work of Douady--Hubbard and Lavaurs
\cite{notesorsay1, notesorsay2, lavaurs1989systemes}, one considers additive perturbations
\[
f_{\varepsilon}(w)=f(w)+\varepsilon^{2},
\]
with $\varepsilon\to 0$ along sequences of the form
\[
\varepsilon_n=\frac{\pi}{n}+\frac{\pi \sigma}{n^{2}}+O\Big(\frac{1}{n^{2+\alpha}}\Big),\qquad \alpha>0.
\]
Then the renormalized dynamics after $n$ iterates converges (locally uniformly on compact subsets of
the parabolic basin $\mathcal B_f$) to a \emph{Lavaurs map} $L_{\sigma}$ associated with $f$.
Parabolic implosion has deep
 consequences in complex dynamics; for instance it explains the
discontinuity of Julia sets at parabolic parameters \cite{Douady94Julia} and it plays a key role in
Shishikura's proof that the boundary of the Mandelbrot set has Hausdorff dimension two \cite{shishikura1998hausdorff}.
We refer to \cite{peters2020parabolic} for an overview of the theory and
to \cite{BC12area, cheraghi2015satellite, inou2006renormalization} for further
 consequences and applications.

Parabolic implosion techniques
have also been developed in higher dimension, where these phenomena
 appear
naturally in the study of semi-parabolic or tangent-to-the-identity dynamics.  In particular, they
have been used to construct genuinely higher--dimensional phenomena such as wandering Fatou
components for polynomial skew products \cite{astorg2014two, astorg2019wandering, astorg2022dynamics},
 and to obtain discontinuity statements for
two--dimensional families 
\cite{bedford2017semi, dujardin2015stability, bianchi2019parabolic, astorg2026parabolic}
and structural properties for the chaotic sets
 \cite{astorg2025horn}.

The purpose of this paper is to study an analogue of parabolic implosion in a general
\emph{non-autonomous} setting, where the perturbation is allowed to vary at each iterate.  This type
of problem is motivated both by higher--dimensional skew-product dynamics and by random
models.
  In one complex dimension, a first
result in this direction is due to Vivas
\cite{vivas2020non},
 see also the recent work
\cite{HSV26}.
Here we develop a general additive non-autonomous theory
for arbitrary parabolic germs and obtain an explicit description of the limiting Lavaurs phase.

\subsection{Non-autonomous perturbations and main result}

Let
\begin{equation}\label{eq:int:f}
f(w)=w+w^2+O(w^3)
\end{equation}
be a holomorphic germ at the origin. For each integer $n$, we consider sequences
of perturbations $\varepsilon_{k,n}$ and define the non-autonomous iteration
\begin{equation}\label{eq:int:wj}
w_{k+1}^{(n)}=f(w_k^{(n)})+\varepsilon_{k,n}^2,
\qquad k=0,\dots,n-1.
\end{equation}

We are interested in the asymptotic behavior of $w_n^{(n)}$
 as $n\to\infty$, under
perturbations of Lavaurs type
\begin{equation}\label{eq:ejn}
\varepsilon_{k,n}
=
\frac{\pi}{n}
+
\frac{\pi}{n^2}\sigma_{k,n}
+
O\Big(\frac{1}{n^{2+\alpha}}\Big),
\end{equation}
where 
$\alpha>0$ can be taken arbitrarily small (in particular, 
for simplicity, we will always assume that $\alpha<1$).
In contrast with the autonomous case, 
now
 the perturbation not only depends on $n$ but is
also
allowed to vary with
the time index $k$. 
Our main result shows that the non-autonomous system still converges to a Lavaurs
map, whose phase is obtained as an explicit weighted average of the
perturbation parameters.

Let $L_u$ denote the classical Lavaurs map associated with $f$ and phase $u\in \mathbb C$.
This map depends only on $f$ and $u$ and
has an explicit expression, see Section \ref{ss:lavaurs}.

\begin{thm}
\label{th:main}
Let
$f$, 
 $\{w_k^{(n)}\}$, and
 $\varepsilon_{k,n}$
  be as in \eqref{eq:int:f}, 
  \eqref{eq:int:wj},
  and \eqref{eq:ejn}. 
Assume that the $(\sigma_{k,n})$ as in \eqref{eq:ejn} are 
uniformly bounded.
Then
\[
w_n^{(n)}
=
L_{u_n}(w_0)
+
o(1),
\qquad n\to\infty,
\]
locally uniformly on the parabolic basin $\mathcal B_f$,
where the phase $u_n$ is given by
\[
u_n
=
\frac{1}{n}
\sum_{k=0}^{n-1}
\sigma_{k,n}\, G\!\left(\frac{k+1}{n}\right),
\qquad
G(x)=2\sin^2(\pi x).
\]
\end{thm}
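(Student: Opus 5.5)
We follow the classical three-stage proof of Lavaurs' theorem (entrance in attracting Fatou coordinates, passage through the eggbeater in a perturbed Fatou coordinate, exit through the repelling Fatou coordinate) and track the extra drift from the non-constant $\sigma_{k,n}$.

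\textbf{Approximate Fatou coordinate.} For $\varepsilon$ small, let $\phi_\varepsilon$ be an approximate Fatou coordinate for $f_\varepsilon(w)=f(w)+\varepsilon^2$, normalized as in the classical theory. To leading order it is
\[
\phi_\varepsilon(w)=\frac{1}{\varepsilon}\arctan\!\Big(\frac{w}{\varepsilon}\Big)+(\text{correction from the }w^3\text{ terms}),
\]
chosen so that $\phi_\varepsilon\circ f_\varepsilon=\phi_\varepsilon+1+O(\text{small})$ uniformly on a "gate" region. As $\varepsilon\to0$ it converges to the attracting coordinate $\phi^\iota$ on the incoming side and to $\phi^o+\pi/\varepsilon$ on the outgoing side. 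We use the fixed reference value $\varepsilon=\varepsilon_n^0=\pi/n$.

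\textbf{Change in one step.} The key computation is the effect on the reference coordinate of replacing $\varepsilon_n^0$ by $\varepsilon_{k,n}$ in a single step:
\[
\phi_{\varepsilon_n^0}\big(f(w)+\varepsilon_{k,n}^2\big)-\phi_{\varepsilon_n^0}\big(f(w)+(\varepsilon_n^0)^2\big)
\approx \phi'_{\varepsilon_n^0}(w)\,\big(\varepsilon_{k,n}^2-(\varepsilon_n^0)^2\big).
\]
Here $\phi'_\varepsilon(w)\approx 1/(w^2+\varepsilon^2)$ and $\varepsilon_{k,n}^2-(\varepsilon_n^0)^2=2\pi^2\sigma_{k,n}/n^3+O(n^{-3-\alpha})$. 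Along the orbit inside the gate, $w_k\approx \varepsilon\tan(\varepsilon(k-k_0))$, so
\[
\frac{1}{w^2+\varepsilon^2}=\frac{\cos^2(\varepsilon(k-k_0))}{\varepsilon^2}=\frac{n^2}{\pi^2}\cos^2\!\Big(\pi\frac{k-k_0}{n}\Big).
\]
The total time spent is $n$ up to $O(1)$, with the crossing at $k\approx n/2$, and $\cos^2(\pi(x-1/2))=\sin^2(\pi x)$. Hence step $k$ adds
\[
\frac{2\sigma_{k,n}}{n}\sin^2\!\Big(\frac{\pi(k+1)}{n}\Big)
\]
to the Fatou coordinate, up to small error. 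The index shift $k+1$ is harmless modulo $o(1)$, but we keep it to match the statement. The $O(n^{-2-\alpha})$ errors contribute in total $\sum_k n^{2}\cos^2(\cdot)\,O(n^{-3-\alpha})=O(n^{-\alpha})$. Summing the main terms shows that the orbit sits exactly where the autonomous orbit with $\sigma\equiv0$ would, shifted by $u_n$ in Fatou coordinate. Since the reference coordinate's exit is $\phi^o+\pi/\varepsilon_n^0=\phi^o+n$, we get $w_n=(\phi^o)^{-1}(\phi^\iota(w_0)+u_n)+o(1)=L_{u_n}(w_0)+o(1)$, matching the convention for $L_\sigma$ in Section \ref{ss:lavaurs}.

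\textbf{Main obstacle.} The hard step is making the linearization rigorous uniformly over $n$ steps. The one-step errors must be small relative to $1/n$ where the orbit is near the origin, and controllable where $|w|$ is of order $1$, where $\phi'$ is bounded and the $\sigma$-terms are $O(n^{-3})$. Following the standard inductive scheme, we prove by induction on $k$ that $\phi_{\varepsilon_n^0}(w_k)=\phi_{\varepsilon_n^0}(w_0)+k+S_k+O(n^{-\alpha'})$, where $S_k$ is the partial weighted sum. The bounds available are:
\begin{itemize}
\item the second-order Taylor remainder $|\phi''|\,|\Delta|^2\lesssim (n^3)(n^{-6})$ per step;
\item the classical estimate $\phi\circ f_\varepsilon-\phi-1=O(|w|+|\varepsilon|)$, with its known summability along gate orbits.
\end{itemize}
Uniform boundedness of $\sigma$ keeps the perturbed orbit within $O(1)$ in Fatou coordinate of the unperturbed one, hence inside the gate, which closes the induction. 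Local uniformity in $w_0$ then follows from uniform convergence of Fatou coordinates on compacts, with the entrance and exit phases (finitely many steps, where $\varepsilon$-variation is negligible) handled as in Lavaurs' proof.
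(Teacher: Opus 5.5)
Your central computation is the same as the paper's. The drift $\phi_{\pi/n}'(w_{k+1})\,(\varepsilon_{k,n}^2-\pi^2/n^2)\approx \frac{2\pi^2\sigma_{k,n}}{n^3}\,(w_{k+1}^2+\pi^2/n^2)^{-1}$ is exactly $n\,H_{k,n}(w_{k+1})$. The paper obtains this term as the displacement $f_k(\zeta^\pm)-\zeta^\pm=\delta_k/n^3+\dots$ of the approximate fixed points that enter $\psi$. Evaluating it along $w_{k+1}\approx-\frac{\pi}{n}\cot(\pi(k+1)/n)$ gives $\frac{\sigma_{k,n}}{n}G(\frac{k+1}{n})$, as you say.

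The gap is in the part you delegate to ``the classical theory''. As stated, your scheme does not produce an $o(1)$ error, for two reasons.
\begin{enumerate}
\item An explicit approximate coordinate $\frac1\varepsilon\arctan(w/\varepsilon)+(\text{correction})$ does not converge to $\phi^\iota$ on compact subsets of the attracting petal. After normalization it converges only to the asymptotic expansion $-1/w+(1-a)\log(-w)$, which agrees with $\phi^\iota$ up to $o(1)$ only as $w\to0$. So the entrance, and symmetrically the exit, cannot be done in finitely many steps: the initialization error $\phi_{\pi/n}(w_0)-\phi^\iota(w_0)$ is $O(1)$, not $o(1)$.
\item The defect bound $\phi\circ f_\varepsilon-\phi-1=O(|w|+|\varepsilon|)$ is not summable to $o(1)$ along a gate orbit. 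Indeed $\sum_{k<n}|\varepsilon_{k,n}|\approx\pi$ and $\sum_k|w_k|\approx\sum_k\frac{\pi}{n}|\cot(\pi k/n)|\sim 2\log n$. Even after the $(1-a)\log$-correction the defect is $O(|w_k|^2)$, and since $w_k\approx-1/k$ for small $k$, its sum from $k=0$ is still of order $1$.
\end{enumerate}
Hence your induction $\phi(w_k)=\phi(w_0)+k+S_k+O(n^{-\alpha'})$ cannot be closed with the two bounds you list.

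The missing idea is the two-scale matching used in the paper. The paper uses the corrected coordinate $\phi=\chi\circ\psi$, with $\chi(W)=W-\frac{1-a}{n}\log\sin(\pi W)$, whose increments are $\frac1n+H_{k,n}+O(w_k^2/n)+O(n^{-2-\alpha})+\dots$ (Proposition~\ref{prop:estimate-central}). Crucially, it uses $\phi$ only for $k_n\le k\le n-k_n$, with $k_n\to\infty$.
\begin{itemize}
\item For $k\le k_n$ the orbit is followed with the true $\phi^\iota$. Step $k$ costs $O((n|w_k|)^{-2})=O(k^2/n^2)$, for a total of $O(k_n^3/n^2)$.
\item Since $w_{k_n}\approx-1/k_n\to0$, \eqref{eq:phi-precise} then gives $n\phi(w_{k_n})=\phi^\iota(w_{k_n})-(1-a)\log(\pi/n)+o(1)$ (Proposition~\ref{p:entering}). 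The same matching is done with $\phi^o$ at time $n-k_n$.
\item In the middle range the defects, multiplied by $n$, add up to $\sum_{k\ge k_n}|w_k|^2\lesssim 1/k_n$. There is an additional $O(n\log(n/k_n)/k_n^2)$ cost from inverting $\chi$ when evaluating $H_{k,n}$.
\end{itemize}
These estimates force $k_n^3/n^2\to0$ and $n\log(n/k_n)/k_n^2\to0$, e.g.\ $k_n=\lfloor n^\beta\rfloor$ with $1/2<\beta<2/3$.

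Alternatively, you may have meant genuine Fatou coordinates $\Phi_n$ of $f+\pi^2/n^2$ on a gate. These have zero defect and, suitably normalized, converge to $\phi^\iota$ and $\phi^o+n$ on compacts, so finitely many entrance and exit steps would then be fine. But the proof would then rest on two facts you do not establish, and neither follows from the bounds you list:
\begin{itemize}
\item a uniform derivative estimate such as $\Phi_n'(w)=(w^2+\pi^2/n^2)^{-1}+o(n^2)$ along the gate, needed to replace $\Phi_n'$ by the explicit weight;
\item the fact that the perturbed orbit stays in the domain of $\Phi_n$.
\end{itemize}
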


In particular, whenever $u_n\to u$, the iterates converge to the Lavaurs map $L_u$.
We observe here that the function $G$ is universal: it does not depend on the higher-order terms of
$f$.

\subsection{Consequences}

We list here a few consequences
 of 
Theorem \ref{th:main}.
As a first application 
 we recover the following result, which is the core technical part of \cite{astorg2014two}.

\begin{coro}[Proposition A, \cite{astorg2014two}]\label{cor:propA}
	Take $F(z,w)=(p(z), q(w)+\frac{\pi^2}{4}z)$, where $p(z)=z-z^2+O(z^3)$ and $q(w)=w+w^2+O(w^3)$.
	Then 
	$$F^{2n+1}(p^{n^2}(z), w) = (0, L_0(w)+o(1))$$
	with local uniform convergence on $\bcal_p \times \bcal_q$.
\end{coro}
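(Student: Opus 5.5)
The plan is to apply Theorem~\ref{th:main} to the second coordinate of $F$, treating the first coordinate as a non-autonomous perturbation. Fix $z$ in a compact subset of $\bcal_p$ and put $N=2n+1$. Set $z_k:=p^{n^2+k}(z)$ for $k=0,\dots,2n$. Then the second coordinate of $F^{k}(p^{n^2}(z),w)$ is exactly the non-autonomous orbit
\[
w_{k+1}=q(w_k)+\eps_{k,N}^2,\qquad \eps_{k,N}:=\frac{\pi}{2}\sqrt{z_k},
\]
where we take the principal branch, which is legitimate since $z_k\to 0$ tangentially to $\R_{>0}$. The first coordinate is $p^{n^2+2n+1}(z)$, and it tends to $0$ uniformly on compacts of $\bcal_p$. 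So everything reduces to checking \eqref{eq:ejn} with bounded $\sigma_{k,N}$ and computing the limit of the phase $u_N$.

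\textbf{Step 1: asymptotics of the first coordinate.} Using the attracting Fatou coordinate of $p$ (of the form $\phi(z)=1/z + c\log z + O(1)$, with $\phi\circ p=\phi+1$), we get, uniformly on compacts of $\bcal_p$,
\[
p^m(z)=\frac1m+O\Big(\frac{\log m}{m^2}\Big).
\]
For $m=n^2+k$ with $0\le k\le 2n$ this gives
\[
z_k=\frac{1}{n^2}\Big(1-\frac{k}{n^2}\Big)+O\Big(\frac{\log n}{n^4}\Big).
\]
This is the step I expect to need the most care. The logarithmic correction must be controlled uniformly in $z$ and in $k$. The point is that it only enters at relative order $\log n/n^2$, so it lands in the error term.

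\textbf{Step 2: identify $\sigma_{k,N}$.} Taking square roots,
\[
\eps_{k,N}=\frac{\pi}{2n}-\frac{\pi k}{4n^3}+O\Big(\frac{\log n}{n^3}\Big).
\]
Next compare with
\[
\frac{\pi}{N}=\frac{\pi}{2n}-\frac{\pi}{4n^2}+O(n^{-3}).
\]
This gives
\[
\eps_{k,N}=\frac{\pi}{N}+\frac{\pi}{N^2}\,\sigma_{k,N}+O\Big(\frac{\log N}{N^{3}}\Big),\qquad \sigma_{k,N}=\frac{N^2}{4n^2}\Big(1-\frac{k}{n}\Big)=1-\frac{2k}{N}+O\Big(\frac1N\Big).
\]
The $\sigma_{k,N}$ are uniformly bounded, and the remainder is $O(N^{-2-\alpha})$ for any $\alpha<1$. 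Hence Theorem~\ref{th:main} applies, with $f=q$ and $N=2n+1$ iterations.

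\textbf{Step 3: the phase.} Since $\sigma_{k,N}=h(k/N)+O(1/N)$ with $h(x)=1-2x$, Riemann sums give
\[
u_N=\frac1N\sum_{k=0}^{N-1}\sigma_{k,N}\,G\Big(\frac{k+1}{N}\Big)\longrightarrow \int_0^1 (1-2x)\,2\sin^2(\pi x)\,dx=0.
\]
The integral vanishes because $G(1-x)=G(x)$ while $h(1-x)=-h(x)$.

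\textbf{Step 4: conclude.} By Theorem~\ref{th:main}, $w_N=L_{u_N}(w)+o(1)$ locally uniformly in $w\in\bcal_q$. The Lavaurs map $L_u$ is obtained by composing the attracting Fatou coordinate, the translation by $u$, and the inverse repelling Fatou coordinate, so it depends continuously on $u$, locally uniformly on $\bcal_q$. Therefore $L_{u_N}\to L_0$, which gives $F^{2n+1}(p^{n^2}(z),w)=(0,L_0(w)+o(1))$.

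Uniformity in $z$ follows from the uniformity in Step 1. The constants in the $O(N^{-2-\alpha})$ term and the bound on $\sigma_{k,N}$ are uniform over the compact set of $z$, and the proof of Theorem~\ref{th:main} depends only on these bounds.
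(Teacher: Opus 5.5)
Your proposal is correct and follows essentially the paper's argument: both apply Theorem~\ref{th:main} to the fiber orbit with $\eps_k=\frac{\pi}{2}\sqrt{p^{n^2+k}(z)}$ and read off $\sigma$ from $p^m(z)=\frac{1}{m+O(\log m)}$. The difference is cosmetic. The paper takes $N=2n$, gets $\sigma_j=-2j/N$ and phase $-1$, and then uses one extra iterate $q\circ L_{-1}=L_0$; you take $N=2n+1$ directly, absorb the shift into $\sigma_{k,N}=1-\frac{2k}{N}+O(\frac1N)$, and get phase $0$ from the symmetry $G(1-x)=G(x)$. You also make explicit the uniformity in $z$, which the paper leaves implicit.
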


We also recover a more general version of the main result of \cite{vivas2020non}, which was 
proved in the particular case where $f(w)=\frac{w}{1-w}$, 
see also \cite{HSV26}.
Observe that, for this choice of $f$, the maps $\phi^\iota$ and $\phi^o$
as in Section \ref{ss:prelim:germs} are equal to $-1/w$
(see \eqref{eq:phi-precise} for the general expression), 
hence the map $L_0$ is equal to the identity.

\begin{coro}\label{cor:vivas}
Let $f$, $\{w_k^{(n)}\}$, and $\varepsilon_{k,n}$ be as in \eqref{eq:int:f}, \eqref{eq:int:wj}, 
	and \eqref{eq:ejn},
	 and assume that
\begin{equation}\label{eq:symm-sigma}
\sigma_{k,n}+\sigma_{n-2-k,n}=O\!\left(\frac1n\right)
\qquad\text{for every } n\in\mathbb N \text{ and } 0\le k\le n-2.
\end{equation}
Then
\[
w_n^{(n)} = L_0(w_0)+o(1).
\]
\end{coro}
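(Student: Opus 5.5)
The plan is to apply Theorem \ref{th:main} directly and show that the phase $u_n$ tends to $0$. The conclusion then follows from continuity of $u\mapsto L_u$ in the phase, locally uniformly on $\mathcal B_f$. Recall from Section \ref{ss:lavaurs} that $L_u$ is obtained by composing a fixed map with a translation by $u$ in Fatou coordinates, so it depends holomorphically on $u$. Consequently $L_{u_n}(w_0)-L_0(w_0)\to 0$ locally uniformly whenever $u_n\to 0$. We also note that \eqref{eq:symm-sigma} forces $\sigma_{k,n}$ to be bounded if and only if the other half is bounded. Since the hypotheses of Theorem \ref{th:main} are assumed, the $\sigma_{k,n}$ are uniformly bounded, say $|\sigma_{k,n}|\le M$.

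To show $u_n\to0$, split off the last term of the sum defining $u_n$: the term $k=n-1$ is $\frac1n\sigma_{n-1,n}G(1)=0$, since $G(1)=2\sin^2\pi=0$. Hence
\[
u_n=\frac1n\sum_{k=0}^{n-2}\sigma_{k,n}\,G\!\left(\tfrac{k+1}{n}\right).
\]
The involution $k\mapsto n-2-k$ preserves $\{0,\dots,n-2\}$ and sends $\frac{k+1}{n}$ to $\frac{n-1-k}{n}=1-\frac{k+1}{n}$. Because $G(1-x)=2\sin^2(\pi-\pi x)=G(x)$, the weights are exactly invariant under this involution. Symmetrizing the sum then gives
\[
2u_n=\frac1n\sum_{k=0}^{n-2}\bigl(\sigma_{k,n}+\sigma_{n-2-k,n}\bigr)G\!\left(\tfrac{k+1}{n}\right),
\]
and each bracket is $O(1/n)$ by \eqref{eq:symm-sigma}, with $0\le G\le 2$. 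Therefore $|u_n|\le \frac{1}{2n}\cdot (n-1)\cdot 2\cdot \frac{C}{n}=O(1/n)\to 0$.

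Combining the two steps gives $w_n^{(n)}=L_{u_n}(w_0)+o(1)=L_0(w_0)+o(1)$, locally uniformly on $\mathcal B_f$. The only step that needs care is the exact matching of indices: the shift $n-2-k$ in \eqref{eq:symm-sigma} pairs with the argument $(k+1)/n$ of $G$ precisely because of the $+1$, and the unpaired index $k=n-1$ is harmless because $G$ vanishes at $1$. The continuity of $L_u$ in $u$ is standard from the explicit formula recalled in Section \ref{ss:lavaurs}.
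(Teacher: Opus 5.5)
Your proposal is correct and follows essentially the same route as the paper: reduce to $u_n\to 0$ via Theorem \ref{th:main}, drop the $k=n-1$ term because $G(1)=0$, symmetrize with $G(x)=G(1-x)$ under the involution $k\mapsto n-2-k$, and bound the result by $O(1/n)$. Your only addition is that you spell out the continuity of $u\mapsto L_u$, locally uniformly on $\mathcal B_f$, which the paper leaves implicit.
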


We can also see the ${\sigma_{k,n}}$'s
 as random variables. For instance, we have the following probabilistic
version of Theorem \ref{th:main}.

\begin{coro}\label{cor:random}
Let $f$, $(w_{k}^{(n)})$,
 and $\varepsilon_{k,n}$ 
 be as in \eqref{eq:int:f},
 \eqref{eq:int:wj},
  and \eqref{eq:ejn}, where
 $(\sigma_{k,n})= (\sigma_k)$
  is a sequence of uniformly  bounded random variables such that
	 \[\frac{1}{n} \sum_{k=0}^{n-1} \sigma_k \to u \in \C
	 \quad \text{ almost surely}.\]
	 Then,
for every
$w_0 \in \bcal_f$,
the sequence of random variables $(w_n^{(n)})$
	converges to $L_u(w_0)$ almost surely.
\end{coro}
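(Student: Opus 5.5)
We deduce Corollary \ref{cor:random} from Theorem \ref{th:main}, applied pathwise. The phase given there is $u_n=\frac1n\sum_{k=0}^{n-1}\sigma_k G\big(\frac{k+1}{n}\big)$, and the hypothesis only controls the unweighted averages $\frac1n\sum_{k<n}\sigma_k$. The plan is therefore to show that, on the almost-sure event where these Cesàro averages converge to $u$, the weighted averages converge to $u\int_0^1 G(x)\,dx$. Since $G(x)=2\sin^2(\pi x)=1-\cos(2\pi x)$, we have $\int_0^1 G=1$, so $u_n\to u$.

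Fix an outcome $\omega$ in the full-measure event $\Omega_0$ on which $S_n:=\sum_{k<n}\sigma_k(\omega)$ satisfies $S_n/n\to u$. Set $M:=\sup_k|\sigma_k|$, which we may take to be deterministic or at least finite on $\Omega_0$; uniform boundedness gives this. For this fixed $\omega$ the sequence $(\sigma_k(\omega))$ is a deterministic bounded sequence, so Theorem \ref{th:main} applies and gives $w_n^{(n)}=L_{u_n}(w_0)+o(1)$.

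To identify the limit of $u_n$, we use Abel summation. Writing $G_k:=G(\frac{k+1}{n})$,
\[
\frac1n\sum_{k=0}^{n-1}\sigma_kG_k=\frac1n S_nG_{n-1}-\frac1n\sum_{k=0}^{n-2}S_{k+1}(G_{k+1}-G_k).
\]
The boundary term vanishes because $G(1)=0$. Since $G$ is Lipschitz, $|G_{k+1}-G_k|\le C/n$. We write $S_{k+1}=(k+1)u+r_{k+1}$ with $r_m=o(m)$, so $|r_m|\le \eta m$ for $m\ge m_0(\eta)$, and $|r_m|\le 2Mm$ always. The error contribution is then bounded by
\[
\frac1n\sum_k|r_{k+1}|\frac Cn\le \frac{C}{n^2}\big(2Mm_0^2+\eta n^2\big)\to C\eta .
\]
Since $\eta$ is arbitrary, this contribution tends to $0$. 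The main term $-\frac un\sum_k (k+1)(G_{k+1}-G_k)$ is a Riemann–Stieltjes sum for $-u\int_0^1 x\,dG(x)$. Integrating by parts, this equals $u\int_0^1G=u$. This gives $u_n(\omega)\to u$.

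It remains to pass from $L_{u_n}$ to $L_u$, which is the one step needing care. The paper notes that $w_n^{(n)}\to L_u$ whenever $u_n\to u$, but made explicit this requires continuity of $u\mapsto L_u(w_0)$. From the explicit expression in Section \ref{ss:lavaurs}, $L_u$ is $(\phi^o)^{-1}$ composed with a translation by $u$ (up to normalization) composed with $\phi^\iota$. Hence $L_u(w_0)$ depends holomorphically, and in particular continuously, on $u$ wherever it is defined. The only subtlety is the domain of definition: $L_{u_n}(w_0)$ should be understood as in Theorem \ref{th:main}, and continuity at $u$ holds on the locus where $L_u(w_0)$ is defined. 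I expect this, together with making the $o(1)$ in the theorem uniform in the bounded sequence (the theorem's constants depend only on the bound $M$ and the $O(n^{-2-\alpha})$ constant), to be the only technical obstacle.

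Combining these steps gives $w_n^{(n)}(\omega)\to L_u(w_0)$ for every $\omega\in\Omega_0$, that is, almost surely.
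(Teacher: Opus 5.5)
Your proposal is correct and follows the paper's proof: apply Theorem \ref{th:main} for each fixed outcome, and show $u_n\to u\int_0^1 G=u$ by summation by parts. The paper packages that step as Lemma \ref{lem:weighted-cesaro}, and your inline version, using $G(1)=0$ and the Lipschitz bound on $G$, is the same computation. The two ``obstacles'' you flag at the end are not real ones. First, $(\phi^o)^{-1}$ extends to an entire function, so $L_u(w_0)=(\phi^o)^{-1}(\phi^\iota(w_0)+u)$ is defined for every $u\in\C$ and $w_0\in\mathcal B_f$ and depends holomorphically on $u$. Second, since you fix $\omega$ before invoking the theorem, no uniformity of the $o(1)$ with respect to the sequence $(\sigma_k)$ is needed.
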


Finally, one can also consider the case where the $\sigma_k$'s
 arise (deterministically) from the action of a second dynamical system.

\begin{coro}\label{cor:measurableskp}
	Let $(\Omega, \mathcal{T}, \mu)$ be a probability space,  $T: \Omega \to \Omega$ be an ergodic transformation, and
	take  $\sigma \in L^\infty(\Omega, \mathcal{T}, \mu)$. Consider a	 sequence of measurable skew-products of the form 
\[f_n(z,w)=(T(z), q(w) + \eps_n(z)^2),
\quad 
\text{ where }
\eps_n(z) := \frac{\pi}{n} + \frac{\pi}{n^2} \sigma(z) + O\Big(\frac{1}{n^{2+\alpha}}\Big).\]
Then for $\mu$-almost every
$z$, 
	$$\limn \pi_2 \circ f_n^n(z,w)= L_u(w), 
	\quad \text{ 	where }
	\quad u:=\int \sigma(z) d\mu(z)$$
and  $\pi_2: \Omega \times \C$ denotes the projection on the second coordinate.
\end{coro}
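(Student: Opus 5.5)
The plan is to reduce to Theorem~\ref{th:main} fiber by fiber, using the Birkhoff ergodic theorem to identify the limit of the phases. Fix $z\in\Omega$ and $w_0=w\in\bcal_q$. By definition of $f_n$, the second coordinate of $f_n^n(z,w)$ is $w_n^{(n)}$, where
\[
w^{(n)}_{k+1}=q(w^{(n)}_k)+\eps_n(T^kz)^2 ,\qquad w^{(n)}_0=w .
\]
This is exactly the iteration \eqref{eq:int:wj} with $\eps_{k,n}:=\eps_n(T^kz)$. Hence \eqref{eq:ejn} holds with $\sigma_{k,n}:=\sigma(T^kz)$. I will assume that the $O(n^{-2-\alpha})$ remainder is uniform in $z$, which is how the statement is to be read. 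Since $\sigma\in L^\infty$, after discarding a null set we have $|\sigma(T^kz)|\le\|\sigma\|_\infty$ for all $k$: the set $\bigcup_k T^{-k}\{|\sigma|>\|\sigma\|_\infty\}$ is null because $T$ preserves $\mu$. Theorem~\ref{th:main} therefore applies and gives
\[
\pi_2\circ f_n^n(z,w)=L_{u_n(z)}(w)+o(1),\qquad u_n(z)=\frac1n\sum_{k=0}^{n-1}\sigma(T^kz)\,G\!\left(\tfrac{k+1}{n}\right).
\]

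The key step is to show that $u_n(z)\to u\int_0^1G(x)\,dx$ for $\mu$-a.e.\ $z$, and that this limit equals $u$. The second part is immediate, since $\int_0^1 2\sin^2(\pi x)\,dx=1$. For the first part, let $S_k=\sum_{j<k}\sigma(T^jz)$. By Birkhoff's ergodic theorem, $S_k/k\to u$ for a.e.\ $z$. Together with the uniform bound $|S_k|\le k\|\sigma\|_\infty$, this gives $\max_{k\le n}|S_k-ku|=o(n)$: split into $k\le\delta n$ and $k>\delta n$, then let $\delta\to0$.

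Next I apply Abel summation. Using $G(1)=0$ and $S_0=0$,
\[
\sum_{k=0}^{n-1}\sigma(T^kz)G\!\left(\tfrac{k+1}{n}\right)=-\sum_{k=1}^{n-1}S_k\Big(G\big(\tfrac{k+1}{n}\big)-G\big(\tfrac kn\big)\Big).
\]
Now replace each $S_k$ by $ku$. Since $G$ is Lipschitz, the total variation $\sum_k|G(\frac{k+1}n)-G(\frac kn)|$ is $O(1)$, so this replacement costs only $o(n)$. The resulting main term is $-u\sum_{k=1}^{n-1}k\,\Delta G_k$. Summing by parts once more, it equals $u\sum_{k=1}^{n}G(k/n)+O(1)=nu\int_0^1G+O(1)$, by a Riemann sum estimate. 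Dividing by $n$ gives $u_n(z)\to u$.

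The final step is to pass from $L_{u_n}$ to $L_u$. I will use the explicit form of the Lavaurs maps from Section~\ref{ss:lavaurs}. There, $L_v$ is obtained by composing the attracting Fatou coordinate, a translation by an amount depending affinely on $v$, and the inverse repelling Fatou parametrization. Consequently $(v,w)\mapsto L_v(w)$ is continuous, indeed holomorphic in $v$, locally uniformly on $\bcal_q$. Hence $L_{u_n(z)}(w)\to L_u(w)$, and combined with the display above this yields the corollary, even locally uniformly in $w$.

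I expect the main obstacle to be the weighted ergodic average, namely handling the non-constant weights $G((k+1)/n)$. The Abel summation above resolves it, given the $o(n)$ uniformity of the Birkhoff sums. A secondary point is the uniformity in $z$ of the remainder term in $\eps_n$, which must be implicitly assumed for Theorem~\ref{th:main} to apply along each orbit.
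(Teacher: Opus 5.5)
Your proposal is correct and follows essentially the same route as the paper. The paper applies Corollary~\ref{cor:random} to $\sigma_k=\sigma(T^kz)$ after Birkhoff's theorem, and that corollary is Theorem~\ref{th:main} combined with the summation-by-parts Lemma~\ref{lem:weighted-cesaro}, which you reprove inline for $g=G$ using $G(1)=0$; your added points (the null-set bound on $\sigma(T^kz)$, the continuity step $L_{u_n(z)}\to L_u$, and the need for the $O(n^{-2-\alpha})$ remainder to be uniform in $z$) make explicit what the paper leaves implicit.
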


Finally, 
in the spirit
of the discontinuity of the Julia sets in one-dimensional dynamics, 
we have the following result about the limits of Julia sets
(i.e., the supports of the unique measure of maximal entropy)
for endomorphisms of $\mathbb P^2 = \mathbb P^2(\mathbb C)$.

\begin{coro}	\label{cor:skewproduct}
Let $F: \ptwo \to \ptwo$ be a
holomorphic
	 endomorphism 
which, in some affine chart,
can be written in the form
	$F(z,w) = (p(z), q(w))$,
	where $p$ is a rational map and $q$ is a polynomial map of the form $q(w)=w+w^2+O(w^3)$.
	Let $F_n$ be a sequence of 
holomorphic endomorphisms of $\mathbb P^2$	
 which, in the same affine chart, have  the form
	$$F_n(z,w) = F(z,w) + \left(0,  \left( \frac{\pi}{n} + \frac{a(z)}{n^2} \right)^2\right).$$
	Assume that
	there exist three periodic points 
	$z^{(1)},z^{(2)},z^{(3)}\in J(p)$
	with periods $m_1,m_2,m_3$ such that the three numbers \[ A_j:=\frac{1}{m_j}\sum_{\ell=0}^{m_j-1}  a\!\left(p^\ell(z^{(j)})\right), \qquad j=1,2,3, \] are not collinear in $\C$.
	Then there exists 
a nonempty
	open subset $U \subset \C$ such that
	\[
	J(p) \times U \subset\ \liminf_{n\to\infty} J(F_n),
	\]
	where $J(F_n)$ denotes the Julia set of $F_n$.
\end{coro}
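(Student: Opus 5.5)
The plan is to reduce to Theorem \ref{th:main} along base orbits in $J(p)$, then produce many Lavaurs phases from the three periodic cycles, and finally turn a good phase into a Julia point through the fiberwise description of $J(F_n)$. Since $F_n$ is a skew product $F_n(z,w)=(p(z),q_{n,z}(w))$ with $q_{n,z}(w)=q(w)+(\pi/n+a(z)/n^2)^2$, I will use the standard description of the Julia set of a polynomial-like skew product as $J(F_n)=\overline{\bigcup_{z\in J(p)}\{z\}\times J_{n,z}}$. Here $J_{n,z}$ is the fiber Julia set of the non-autonomous sequence $q_{n,p^{k}(z)}$. So it suffices to find a nonempty open $U$ such that, for every $(z,w)\in J(p)\times U$ and every $n$ large, there is a point of $J_{n,z'}$ close to $w$, for some $z'\in J(p)$ close to $z$.

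\emph{Step 1 (phases along orbits).} For a base point $z'$, the fiber dynamics is exactly \eqref{eq:int:wj} with $\varepsilon_{k,n}$ as in \eqref{eq:ejn}, taking $\sigma_{k,n}=a(p^k(z'))/\pi$ and no error term. Since $J(p)$ is compact and $a$ is bounded near it, Theorem \ref{th:main} applies and gives $q_{n,p^{n-1}z'}\circ\cdots\circ q_{n,z'}=L_{u_n(z')}+o(1)$ on compacts of $\mathcal B_q$, uniformly in $z'$. Here $u_n(z')=\frac1{n\pi}\sum_k a(p^kz')G(\frac{k+1}{n})$. If $z'=z^{(j)}$, periodicity and $\int_0^1G=1$ give $u_n\to A_j/\pi$. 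More generally, suppose the orbit of $z'$ shadows the cycle of $z^{(1)}$ for a fraction $t_1$ of the time window $[0,n]$ in a prescribed way, then the cycle of $z^{(2)}$, and so on, with transitions of length $o(n)$. Then $u_n(z')$ converges to a weighted combination $\frac1\pi\sum_j c_jA_j$, with $c_j\ge0$ and $\sum c_j=1$. Since $G$ is positive on $(0,1)$, the weights $c_j=\int_{I_j}G$ obtained from a partition of $[0,1]$ into intervals $I_j$ realize every point of the simplex. The non-collinearity hypothesis then makes the set $\Delta$ of achievable limit phases contain a nonempty open triangle.

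\emph{Step 2 (shadowing in the base).} Realizing such itineraries requires points $z'\in J(p)$, arbitrarily close to any given $z\in J(p)$, whose orbit first enters a neighbourhood of $z^{(1)}$ in boundedly many steps. The orbit then stays near the cycles of the $z^{(j)}$ for prescribed long times, with short transitions between them. I plan to get this from density of iterated preimages of each $z^{(j)}$ in $J(p)$ together with local expansion near the cycles, possibly after replacing the $z^{(j)}$ by nearby repelling periodic points. Repelling cycles are dense in $J(p)$ and $A_j$ depends continuously enough for non-collinearity to survive. Concretely, I pull back small discs around the cycles by inverse branches, which gives a specification-type property with $o(n)$ transition times. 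This uniform control near every $z\in J(p)$ is the step I expect to be the main obstacle, especially the bookkeeping of uniformity in $z$ and of the $O(1)$ and $o(n)$ errors in $u_n$.

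\emph{Step 3 (from phases to Julia points).} Recall $L_u=(\phi^o)^{-1}\circ(\phi^\iota+u)$, up to the normalisation in Section \ref{ss:lavaurs}. Choose a point $\zeta\in J(q)$ in the domain of $(\phi^o)^{-1}$, and a $w_*\in\mathcal B_q$ with $\phi^o(\zeta)-\phi^\iota(w_*)$ in the interior of $\Delta$. Let $U$ be a small disc around $w_*$; then for each $w\in U$ there is $u\in\Delta$ with $L_u(w)=\zeta$. Now suppose $J_{n,z'}$ avoided a neighbourhood of $w$ for a family of nearby base points $z'$ whose phases $u_n(z')$ sweep a neighbourhood of $u$. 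Then the maps $w\mapsto w^{(n)}_n$ would form a normal family there. Their limits $L_{u_n(z')}$ send an open set of $(z',w)$ across $\zeta\in J(q)$, and iterating further by the unperturbed-like dynamics near $\zeta$ gives non-normality, which is a contradiction. Hence $J(F_n)$ meets every neighbourhood of $(z,w)$ for $n$ large, i.e.\ $J(p)\times U\subset\liminf J(F_n)$.
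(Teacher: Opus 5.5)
The main gap is in Step 3. After time $n$ the fiber maps over $p^n(z')$ are still $q+\varepsilon^2$ with $\varepsilon\approx\pi/n$, at all later times. So they are close to $q$ only on bounded time windows. That gives you a point near $\zeta$ which escapes, since $\zeta\in J(q)=\partial\{w:\,q^k(w)\to\infty\}$ and escape beyond a fixed radius happens in bounded time. But a point of the fiber Julia set near $\zeta$ also requires a point near $\zeta$ whose orbit under the perturbed fiber dynamics stays bounded for \emph{all} times. Without it, the whole neighbourhood may lie in the fiber basin of infinity, where the iterates are normal, and there is no contradiction.

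What is missing is the lower semicontinuity $J(q)\subset\liminf_n J_{n,y_n}$, uniformly in the base points $y_n$. The paper takes this from the continuity of the fiber measures $\mu_z$. You can also prove it directly:
\begin{itemize}
\item take a repelling cycle of $q$ passing close to $\zeta$;
\item it persists under the $O(n^{-2})$ non-autonomous perturbation, because nested pullbacks of a small disc by inverse branches of the perturbed $m$-blocks give a bounded orbit;
\item connectedness of $B(\zeta,r)$, which then contains both an escaping and a bounded orbit, forces $J_{n,y_n}\cap B(\zeta,r)\neq\emptyset$.
\end{itemize}
With this in hand, Step 3 needs neither phases sweeping a neighbourhood nor any normality in $(z',w)$. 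For $w\in U$, put $u=\phi^o(\zeta)-\phi^\iota(w)$ and choose $z'_n\in J(p)$ near $z$ with $u_n(z'_n)\to u$. Let $Q_n:=q_{n,p^{n-1}z'_n}\circ\cdots\circ q_{n,z'_n}$. Applying Theorem \ref{th:main} to the single array $\sigma_{k,n}=a(p^kz'_n)/\pi$ gives $Q_n\to L_u$ near $w$; this also disposes of uniformity in $z'$. By Hurwitz, $Q_n(B(w,\epsilon))\supset B(\zeta,r)$ for $n$ large. Finally pull back, using $J_{n,z'_n}=Q_n^{-1}(J_{n,p^nz'_n})$.

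The second gap is the reduction to repelling cycles in Step 2. The hypothesis allows some $z^{(j)}$ to be parabolic or Cremer. A repelling periodic point close to $z^{(j)}$ generally has a much larger period and an orbit wandering over $J(p)$, so its average of $a$ need not be close to $A_j$. There is no continuity of the kind you invoke. You would need either to approximate the cycle measure of $z^{(j)}$, in the value of $\int a$, by repelling cycle measures, or to shadow the non-repelling cycle directly. Direct shadowing works for a parabolic cycle: points of $J(p)$ in a repelling petal are drawn to it by the local inverse branch and spend only $O_\epsilon(1)$ steps outside its $\epsilon$-neighbourhood. For a Cremer cycle it is unclear. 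The paper's route is insensitive to this, since $v\cdot a=c+h-h\circ p$ with $h$ continuous forces $v\cdot A_j=c$ for periodic points of any type.

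The paper then obtains an open set of phases from the pressure of $\tau_1\re a+\tau_2\im a$, using the inverse function theorem for $\nabla P$. These phases are realized at Birkhoff-generic points of fully supported equilibrium states. Compared with that, your shadowing route, once both gaps are closed (for instance for repelling cycles), is more elementary. It needs neither H\"older regularity of $a$ nor thermodynamic formalism, and it produces the explicit triangle $\pi^{-1}\mathrm{conv}(A_1,A_2,A_3)$ of limit phases. However, its base points $z'_n$ depend on $n$. So it yields only the statement about $J(F_n)$, not the fiberwise inclusion of Proposition \ref{p:FJ}.
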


Corollary \ref{cor:skewproduct} will follow from a more general result
about the limit \emph{Julia-Lavaurs sets} of perturbations as in the statement, 
see Proposition \ref{p:FJ}, as well as from some thermodynamics formalism arguments, 
see Lemma \ref{lem:rotation-interior}.
Observe that no 
hyperbolicity
assumption is made on
the rational map $p$.
As a special case of Corollary \ref{cor:skewproduct}, we can consider
a rational map $p$  whose Julia set is equal to the Riemann sphere
(e.g., a Lattés map).
 In this case, 
 when $\deg p \geq 4$, we can always choose
 the function
  $a(\cdot)$
   so that the non-collinearity condition is satisfied, and
we obtain that
$\liminf_{n\to\infty} J(F_n)$ contains
 a nonempty open subset of $\mathbb P^2$.

\subsection{Structure of the paper}
In
Section~\ref{s:prelim} 
we recall
 the necessary background on Fatou coordinates and classical
parabolic implosion, as well as set some notation for the rest of the paper.
In Section \ref{s:coordinates} we construct approximate Fatou coordinates adapted to the
non-autonomous perturbations
and give the main error terms with respect to actual 
(non-autonomous) Fatou coordinates.
In Section \ref{s:proof} we prove
Theorem \ref{th:main}.
Sections \ref{s:corollaries}
and \ref{s:cor-new}
are devoted to the proofs of the corollaries of the main theorem.

\subsection*{Acknowledgements}

This work was partially done during the authors' participation in the thematic semester
\emph{Holomorphic Dynamics and Geometry of Surfaces} 
which took place in Toulouse in Spring 2025. We would like to thank the organizers
and the Institut de Mathématiques de
Toulouse for the warm welcome and the excellent work conditions.

This project has received funding from
 the French government through the Programme
 Investissement d'Avenir
(ANR QuaSiDy /ANR-21-CE40-0016,
ANR PADAWAN /ANR-21-CE40-0012-01,
ANR DynAtrois / ANR-24-CE40-1163,
ANR TIGerS/ANR-24-CE40-3604)
and from the MIUR Excellence Department Project awarded to the Department of Mathematics, University of Pisa, CUP I57G22000700001.
The second
author is affiliated to the GNSAGA group of INdAM.
Both authors were part of the PHC Galileo project G24-123.

\section{Preliminaries and notations}\label{s:prelim}

In this section we recall the necessary background on parabolic germs,
Fatou coordinates, and classical (autonomous) parabolic implosion.
We also fix notation that will be used throughout the paper.

\subsection{Parabolic germs and Fatou coordinates}\label{ss:prelim:germs}

Let
\[
f(w)=w+w^2+aw^3 + O(w^4)
\]
be a holomorphic germ at the origin.
The fixed point at $w=0$ is parabolic with multiplier $1$ and one attracting
and one repelling direction.  For a sufficiently small $r>0$, define the 
\emph{attracting} and \emph{repelling petals}
 as
\[
P^\iota = \{|w+r|<r\} \quad \text{ and } \quad
P^o  = \{|w-r|<r\}  \]
The attracting petal $P^\iota$ is forward invariant and we have $f^n (w)\to 0$
for every $w\in P^\iota$. The 
\emph{parabolic basin}
 $\mathcal B_f$
consists of all points $w$ with $f^{n}(w)\to 0$ and is equal to 
$\cup_n f^{-n} (P^\iota)$.

It is classical that there exist attracting and repelling Fatou coordinates
on the petals,
that is, holomorphic maps
$\phi^{\iota} \colon \mathcal P^\iota\to \mathbb C$ and
 $\phi^{o}\colon P^o\to \mathbb C$
such that
\begin{equation}\label{eq:fatou-eq}
\phi^{\iota}(f(w))=\phi^{\iota}(w)+1,
\qquad
\phi^{o}(f(w))=\phi^{o}(w)+1.
\end{equation}
The image of $P^\iota$  by $\phi^\iota$ (resp.\ the image of 
$P^o$  by $\phi^o$)
contains a right 
(resp.\ left) half plane.
Moreover, 
$\phi^\iota$ and $\phi^o$ can be chosen\footnote{The maps $\phi^\iota$ and $\phi^o$
are unique up to an additive constant, which we fix 
by requiring the asymptotics
\eqref{eq:phi-precise}, where $\log(\cdot)$ 
denotes the principal branch of the logarithm.}
 to satisfy 
\begin{equation}
\label{eq:phi-precise}
\begin{cases}
\phi^\iota(w)=-\frac{1}{w} +(1-a) \log (- w) + o(1)
\\
\phi^o(w)=-\frac{1}{w} 
+
(1-a) \log w + o(1),
\end{cases}
\quad w\to 0.
\end{equation}
Thanks to \eqref{eq:fatou-eq}, $\phi^\iota$ can 
also  be extended to $\mathcal B_f$. Similarly,
the inverse $(\phi^o)^{-1}$ can be extended to the complex plane.

\subsection{Lavaurs maps}\label{ss:lavaurs}

The now
classical Douady--Lavaurs construction
\cite{lavaurs1989systemes,Douady94Julia}
 associates to $f$
a one-parameter family of holomorphic maps $(L_u)_{u\in\C}$
on $\mathcal B_f$
defined by
\[
L_u
=
(\phi^{o})^{-1}
\circ
T_u
\circ
\phi^{\iota},
\]
where $T_u(\zeta)=\zeta+u$ denotes translation by $u$.
These maps arise as limits of suitable perturbations of $f$.
More precisely, if one considers autonomous perturbations
\[
f_{\varepsilon}(w)=f(w)+\varepsilon^2
\]
and a sequence $\{\varepsilon_n\}$ with
$\varepsilon_n=\pi/n + \pi u/n^2 + o(1/n^2)$,
  then
\[
f_{\varepsilon_n}^{\circ n}
\longrightarrow
L_u
\]
locally uniformly on $\mathcal B_f$.

\section{Approximate Fatou coordinates for non-autonomous perturbations}
\label{s:coordinates}

In this section we construct approximate Fatou coordinates adapted to the
non-autonomous 
iteration
\eqref{eq:int:wj}, where
the  $\varepsilon_{k,n}$
are as in \eqref{eq:ejn}.
The goal is to conjugate the dynamics
of $f_{k,n}:= f + \varepsilon_{k,n}^2$
 to an approximate translation, with an error that can be explicitly
controlled with respect to $k$ and $n$.
 In order to simplify some notations, we will
  drop the index $n$ whenever possible and denote $f_{k}=f_{k,n}$ and $w_{k}=w_k^{(n)}$.
Throughout this section,
 we fix a sequence of integers $(k_n)_{n\ge1}$ such that
$1\le k_n\le n/2$, $k_n\to\infty$, and $k_n=o(n)$.
The precise choice of $k_n$ 
needed in the proof of Theorem \ref{th:main}
will be made in Section \ref{s:proof}.

\subsection{The coordinate $\psi$}
We begin with the following
 lemma, which
  gives an approximation of the fixed points of the maps $f_k$.
  Recall that we assume $0<\alpha<1$ in \eqref{eq:ejn}.

\begin{lem}\label{l:zeta}
Set $\zeta^\pm:= \pm \frac{i\pi}{n} + \frac{a\pi^2}{2n^2}$.
	Then 
	\[f_k(\zeta^\pm) = \zeta^\pm + \frac{\delta_k}{n^3}+ O\Big(
	\frac{1}{n^{3+\alpha}}
	\Big),
\qquad \text{ where } \qquad \delta_k:=2\pi^2 \sigma_k.\]
\end{lem}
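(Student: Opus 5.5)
This is a direct Taylor expansion: compute $f(\zeta^\pm)-\zeta^\pm+\varepsilon_k^2$ up to order $n^{-3}$ and show that the orders $n^{-2}$ and $n^{-3}$ cancel except for the $\sigma_k$ contribution.

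\textbf{Size of the remainders.} Write $\zeta=\zeta^\pm$, so $|\zeta|=O(1/n)$. Then
\[
f(\zeta)-\zeta=\zeta^2+a\zeta^3+O(\zeta^4),
\qquad O(\zeta^4)=O(n^{-4}),
\]
and this is absorbed in $O(n^{-3-\alpha})$ because $\alpha<1$.

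\textbf{Expanding $\zeta^2$ and $a\zeta^3$.} We have
\[
\zeta^2=-\frac{\pi^2}{n^2}\pm\frac{i a\pi^3}{n^3}+O(n^{-4}).
\]
For the cubic term, $\zeta^3=\left(\pm\frac{i\pi}{n}\right)^3+O(n^{-4})=\mp\frac{i\pi^3}{n^3}+O(n^{-4})$, so
\[
a\zeta^3=\mp\frac{ia\pi^3}{n^3}+O(n^{-4}).
\]
This cancels the $n^{-3}$ term of $\zeta^2$. The shift $\frac{a\pi^2}{2n^2}$ in $\zeta^\pm$ was chosen exactly so that this cancellation happens. Hence
\[
f(\zeta)-\zeta=-\frac{\pi^2}{n^2}+O(n^{-4}).
\]

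\textbf{Expanding the perturbation.} Squaring \eqref{eq:ejn} with $\sigma_k$ uniformly bounded gives
\[
\varepsilon_k^2=\frac{\pi^2}{n^2}+\frac{2\pi^2\sigma_k}{n^3}+O(n^{-4})+O\Big(\frac{1}{n}\cdot\frac{1}{n^{2+\alpha}}\Big)
=\frac{\pi^2}{n^2}+\frac{2\pi^2\sigma_k}{n^3}+O(n^{-3-\alpha}).
\]

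\textbf{Conclusion.} Adding the two expansions,
\[
f_k(\zeta^\pm)=\zeta^\pm+\frac{2\pi^2\sigma_k}{n^3}+O(n^{-3-\alpha}),
\]
which is the claim with $\delta_k=2\pi^2\sigma_k$.

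All implied constants are uniform in $k$, because the $\sigma_k$ are uniformly bounded and the $O$-term in \eqref{eq:ejn} is assumed uniform. The only points needing care are the bookkeeping of signs in the cubic cancellation and the use of $\alpha<1$ to absorb the $O(n^{-4})$ terms.
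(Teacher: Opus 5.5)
Your proof is correct and follows the paper's own argument: a direct expansion of $f(\zeta^\pm)-\zeta^\pm+\varepsilon_{k,n}^2$ to order $n^{-3}$. In it the $n^{-2}$ terms cancel, and the shift $\frac{a\pi^2}{2n^2}$ makes the $n^{-3}$ cross term of $\zeta^2$ cancel against $a\zeta^3$, leaving only $2\pi^2\sigma_k/n^3$. The only cosmetic difference is that the paper starts from an undetermined (possibly $k$-dependent) coefficient $\alpha_2^\pm(k)/n^2$ and then solves for it, whereas you substitute the prescribed value directly.
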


\begin{proof}
Let us set
 $\zeta_k^{\pm} := \pm \frac{i\pi}{n} + \frac{\alpha_2^\pm(k)}{n^2}$.
  A direct computation gives
\[\begin{aligned}
f_k (\zeta_k^\pm) - \zeta_{k+1}^{\pm}
 = &
\zeta_k^\pm + ( \zeta_k^\pm)^2 + a (\zeta_k^\pm)^3 + O( (\zeta_k^\pm)^4)  + \eps_{k,n}^2 - \zeta_{k+1}^{\pm}\\
 = & \pm \frac{i\pi}{n} + \frac{\alpha_2^\pm(k)}{n^2}- \frac{\pi^2}{n^2}
\pm \frac{2i\pi \alpha_2^\pm (k)}{n^3} \mp a \frac{i\pi^3}{n^3}		\\
& + \frac{\pi^2}{n^2} + \frac{2\pi^2\sigma_k}{n^3}
 \mp \frac{i\pi}{n} - \frac{\alpha_2^\pm (k+1)}{n^2}
  + O\Big(\frac{1}{n^{3+\alpha}}
  \Big)\\
 =  &  \frac{\alpha_2^\pm (k) - \alpha_2^\pm (k+1)}{n^2}  
+
  \frac{2\pi^2\sigma_k \pm 2i\pi \alpha_2^\pm (k) \mp ai\pi^3}{n^3}
  + O\Big(
  \frac{1}{n^{3+\alpha}}
  \Big).
\end{aligned}\]
Choosing $\alpha_2^\pm (k) = \frac{a\pi^2}{2}$ for all $k\geq 0$, 
we obtain $\zeta^{\pm}_k = \zeta^\pm$ and 
\[
f_k (\zeta_k^\pm) - \zeta_{k+1}^{\pm}
=
\frac{2\pi^2 \sigma_k}{n^3} + O\Big(\frac{1}{n^{3+\alpha}}
\Big),
\]
which is the desired expression.
\end{proof}

Observe that, in particular, $\zeta^\pm$ as above do not depend on $k$
(we allowed for a dependence $\alpha_2(k)$ in the proof to show that
adding such a dependence would not have improved the estimates).

\begin{defi}
We denote
 \[\psi(w):=\frac{1}{2i\pi} \log \Big( \frac{w-\zeta^+}{w-\zeta^-}\Big)
 \quad \text{ and }
 \quad
	A_k := A_k(w):=\psi(w_{k+1}) - \psi(w_k).\]
\end{defi}
{Here and in the rest of the paper, we denote by
 $\log (\cdot)$ 
  the principal branch of the logarithm.}
  Note that the map $\psi$ sends the real axis to the segment $(0,1)$.
 Circles through $\zeta^+$ and $\zeta^-$ are mapped to vertical lines in the strip above.

The map $\psi$ is a first approximation of the non-autonomous Fatou coordinates
that we will use. We will now
 give an estimate for 
the error term $A_k$, for $w$ in a suitable domain. 
We begin 
by giving the following expression for the inverse of $\psi$.

\begin{lem}\label{lem:cot}
	We have
	$$\psi^{-1}(W) = -\frac{\pi}{n} \cot(\pi W) + O\Big(\frac{1}{n^2}\Big).$$
\end{lem}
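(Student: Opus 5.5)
We invert $\psi$ explicitly and then expand in powers of $1/n$. Setting $W=\psi(w)$, we have $e^{2i\pi W}=\frac{w-\zeta^+}{w-\zeta^-}$. Solving this linear fractional equation for $w$ gives
\[
w=\frac{\zeta^+-\zeta^- e^{2i\pi W}}{1-e^{2i\pi W}} .
\]
We then substitute $\zeta^\pm=\pm\frac{i\pi}{n}+c$, where $c:=\frac{a\pi^2}{2n^2}$. The common part $c$ passes through the Möbius expression unchanged: the numerator contributes $c(1-e^{2i\pi W})$, and this cancels against the denominator. The remaining part is
\[
\frac{i\pi}{n}\,\frac{1+e^{2i\pi W}}{1-e^{2i\pi W}} .
\]

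Next we rewrite this using the standard identity
\[
\frac{1+e^{2i\pi W}}{1-e^{2i\pi W}}=\frac{\cos(\pi W)}{-i\sin(\pi W)}=i\cot(\pi W),
\]
obtained by multiplying numerator and denominator by $e^{-i\pi W}$. This gives
\[
\frac{i\pi}{n}\cdot i\cot(\pi W)=-\frac{\pi}{n}\cot(\pi W).
\]
Hence we obtain the exact formula
\[
\psi^{-1}(W)=-\frac{\pi}{n}\cot(\pi W)+\frac{a\pi^2}{2n^2}.
\]
The last term is a constant, independent of $W$, and it is $O(1/n^2)$ uniformly.

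This argument has no genuine obstacle. The only care needed is the branch of the logarithm. Since $\psi^{-1}$ only involves $e^{2i\pi W}$, the formula holds for every $W$ with $e^{2i\pi W}\neq 1$, so it is independent of the branch choice. When the lemma is applied later, $W$ stays in the strip where $\psi$ is defined with the principal branch, and the identity holds there as stated.
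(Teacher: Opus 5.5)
Your proposal is correct and follows the paper's proof: both invert $\psi$ explicitly as a M\"obius expression in $e^{2i\pi W}$ (the paper simply writes it with $e^{\pm i\pi W}$) and arrive at the exact identity $\psi^{-1}(W)=-\frac{\pi}{n}\cot(\pi W)+\frac{a\pi^2}{2n^2}$. Your version spells out the cancellation of the common term $\frac{a\pi^2}{2n^2}$ and the independence from the branch of the logarithm, which the paper leaves implicit.
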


\begin{proof}
Recall that $\cot z = i \frac{e^{iz} + e^{-iz}}{e^{iz}- e^{-iz}}$. Moreover,
by the expression of $\psi$ we see that
\[
\psi^{-1} (W) = \frac{e^{i\pi W} \zeta^- - e^{-i\pi W} \zeta^+ }{e^{i\pi W} - e^{-i\pi W}}.
\]
It follows from the definition of
$\zeta^\pm$
that
\[
\psi^{-1} (W)  = -\frac{\pi}{n} \cot (\pi W) + \frac{a\pi^2}{2n^2},
\]
which gives the desired estimate.
\end{proof}

\begin{defi}
	We define the rectangle 
	$$R_n:=\{W \in \C : \re(W) \in \Big(\frac{k_n}{10n}, 1 - \frac{k_n}{10n}\Big), \quad 
	\im(W) \in (-1,1) \}.$$
\end{defi}

Observe that $R_n$ is contained in the image of $\psi$ for every $n$.
For a fixed $n$ we will mainly consider orbits $(w_k^{(n)})$ such that
$\psi(w_k^{(n)})\in R_n$ for the relevant times $k$.
The following lemma gives a simple estimate that will 
be able to apply to such orbits.

\begin{lem}\label{l:rectangle}
There exists a constant $c>0$
 such that
$|\psi^{-1}(W)-\zeta^\pm| \geq \frac{c}{n}$
for every
$n$ and
 $W \in R_n$.
\end{lem}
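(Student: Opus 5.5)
We use the exact formula from the proof of Lemma \ref{lem:cot}:
\[
\psi^{-1}(W) = -\frac{\pi}{n}\cot(\pi W) + \frac{a\pi^2}{2n^2},
\qquad
\zeta^\pm = \pm\frac{i\pi}{n} + \frac{a\pi^2}{2n^2}.
\]
The two constant terms $\frac{a\pi^2}{2n^2}$ cancel in the difference, so
\[
\psi^{-1}(W)-\zeta^\pm = -\frac{\pi}{n}\big(\cot(\pi W) \pm i\big).
\]
The problem is therefore reduced to the bound $|\cot(\pi W)\pm i|\ge c'$ on $R_n$, with $c'>0$ independent of $n$. We then take $c=\pi c'$.

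Writing $z=\pi W$, we compute
\[
\cot z + i = \frac{2i e^{iz}}{e^{iz}-e^{-iz}},
\qquad
\cot z - i = \frac{2i e^{-iz}}{e^{iz}-e^{-iz}}.
\]
Hence
\[
|\cot z \pm i| = \frac{2 e^{\mp \im z}}{|e^{iz}-e^{-iz}|} = \frac{e^{\mp\im z}}{|\sin z|}.
\]
On $R_n$ we have $|\im z|<\pi$, so the numerator is at least $e^{-\pi}$. The denominator is bounded above uniformly, since $|\sin z|\le \cosh(\im z)\le\cosh\pi$. This gives $c'=e^{-\pi}/\cosh\pi$.

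The real part does not need to be controlled at all. The only possible degeneracy would be $\cot(\pi W)\to\mp i$, which happens only as $\im W\to\pm\infty$, and this is excluded by $|\im W|<1$. Near the poles of $\cot$ at $\re W\in\{0,1\}$ the quantity $|\cot z\pm i|$ only becomes large, which helps the bound. So no step is a genuine obstacle; the only care needed is keeping the exact expression for $\psi^{-1}$, rather than its $O(1/n^2)$ version, so that the constants cancel exactly.
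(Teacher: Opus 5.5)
Your proof is correct and follows the same route as the paper: the explicit formula for $\psi^{-1}$ from Lemma \ref{lem:cot} reduces the claim to a uniform lower bound for $|\cot(\pi W)\pm i|$ on the strip $|\im W|<1$. Your identity $|\cot z\pm i| = e^{\mp\im z}/|\sin z|$ turns the paper's terse remark that $\pm i$ is not in the image of $\cot$ into an explicit bound. That remark alone does not give uniformity, since $\cot(\pi W)\to\mp i$ as $\im W\to\pm\infty$, and your identity makes visible that the bound on $\im W$ is exactly what is needed.
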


\begin{proof}
By Lemma \ref{l:zeta},
it is enough to prove that 
\begin{center}
$\displaystyle |\psi^{-1} (W) \pm \frac{i\pi}{n}| \geq \frac{c}{n}$
for all $W$ with $|\im (W)| \leq 1$,
\end{center}
for some suitable constant $c$ as in the statement.
By Lemma \ref{lem:cot},
 this reduces to 
\begin{center}
$ |\cot(\pi W) \pm i| \geq c$ for some positive $c$.
\end{center}
The above holds since $\pm i$ are not in the image of the map
$W \mapsto \cot (\pi W)$, hence $n\zeta^{\pm}$
are 
uniformly far from 
 the image of
$\overline{R_n}$ under $n\psi^{-1}$.
\end{proof}

\begin{prop}\label{prop_Ak}
For every $n\in \mathbb N$, 
$0\leq k < n$, and
$w \in \mathcal B_f$ with $\psi(w_{k}), \psi(w_{k+1})\in R_n$
 	we have
	$$A_k (w)=\frac{1}{n}
	- (1-a) \frac{w_k}{n}+ H_{k,n}(w_{k+1})+O\Big(\frac{w_k^2}{n},
	\frac{1}{n^{2+\alpha}}
\Big)$$
	where
	\begin{equation}\label{eq:Hkn}
	H_{k,n}(w):=\frac{\delta_k}{2i\pi n^3} \left(\frac{1}{w-\zeta_+}- \frac{1}{w-\zeta^-}\right)
	\end{equation}
	and the estimate is locally uniform
	in $w \in \mathcal B_f$. 
\end{prop}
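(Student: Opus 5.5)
Write $w_{k+1}=f_k(w_k)$ and split the increment of $\psi$ into two parts:
\[
A_k=\big[\psi(f_k(w_k))-\psi(\tilde f_k(w_k))\big]+\big[\psi(\tilde f_k(w_k))-\psi(w_k)\big].
\]
Here $\tilde f_k$ is the comparison map that fixes $\zeta^\pm$ exactly. Rather than introduce $\tilde f_k$, I will compute directly with the factorization of $f_k(w)-\zeta^\pm$. The key algebraic step is
\[
f_k(w)-\zeta^\pm=f_k(w)-f_k(\zeta^\pm)+\frac{\delta_k}{n^3}+O(n^{-3-\alpha}),
\]
which follows from Lemma \ref{l:zeta}. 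Hence
\[
\frac{w_{k+1}-\zeta^+}{w_{k+1}-\zeta^-}
=\frac{f_k(w_k)-f_k(\zeta^+)+\delta_k n^{-3}}{f_k(w_k)-f_k(\zeta^-)+\delta_k n^{-3}}+\dots,
\]
and I treat the $\delta_k n^{-3}$ terms separately.

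\textbf{Main term.} Write $f(w)-f(\zeta)=(w-\zeta)\,g(w,\zeta)$ with $g(w,\zeta)=1+w+\zeta+a(w^2+w\zeta+\zeta^2)+\dots$. Then
\[
\frac{f_k(w)-f_k(\zeta^+)}{f_k(w)-f_k(\zeta^-)}=\frac{w-\zeta^+}{w-\zeta^-}\cdot\frac{g(w,\zeta^+)}{g(w,\zeta^-)}.
\]
Taking $\frac{1}{2i\pi}\log$, the ratio $g(w,\zeta^+)/g(w,\zeta^-)$ contributes
\[
\frac{1}{2i\pi}\log\frac{g(w,\zeta^+)}{g(w,\zeta^-)}.
\]
Expanding with $\zeta^+-\zeta^-=2i\pi/n$, this equals
\[
\frac{1}{2i\pi}\Big[(\zeta^+-\zeta^-)\big(1+a w\big)-(\zeta^+-\zeta^-)\,w+O\big(\tfrac{w^2}{n},\tfrac{1}{n^2}\big)\Big].
\]
The key identity behind this is $\zeta^++\zeta^-$ being $O(n^{-2})$: the linear terms in $\log g$ produce $(\zeta^+-\zeta^-)(1-w+\dots)+a(\zeta^+-\zeta^-)w$, using $\log(1+x)=x-x^2/2+\dots$ and the cross term $-(w+\zeta)^2/2$. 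The result is $\frac1n-(1-a)\frac{w_k}{n}+O(w_k^2/n,n^{-2})$. The $O(n^{-2})$ errors are absorbed into $O(n^{-2-\alpha})$ only if they cancel. I expect they do: the $\zeta^2$ contributions are symmetric, and $\zeta^++\zeta^-=a\pi^2/n^2$ combined with $w$ gives terms $O(w/n^2)$. I will check this term carefully, since the statement claims the error is $O(n^{-2-\alpha})$. If a genuine $n^{-2}$ term remains, it must be $k$-independent and would need to be absorbed into $1/n$ in the statement's normalization. The remaining $O(\cdot)$ from higher-order Taylor terms is $O(|w|^2/n)$, uniformly as long as $|w_k|$ is small. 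For $w_k$ not small, I use local uniformity: $\psi(w_k)\in R_n$ and compactness handle the bounded region.

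\textbf{Perturbation term.} Taking $\frac{1}{2i\pi}\log$ of $\big(1+\frac{\delta_k n^{-3}}{w_{k+1}-\zeta^+-\delta_k n^{-3}}\big)$ divided by the analogous $\zeta^-$ factor, and linearizing the logarithm, gives
\[
\frac{\delta_k}{2i\pi n^3}\Big(\frac{1}{w_{k+1}-\zeta^+}-\frac{1}{w_{k+1}-\zeta^-}\Big)=H_{k,n}(w_{k+1}).
\]
The quadratic remainder is $O\big(n^{-6}/|w_{k+1}-\zeta^\pm|^2\big)=O(n^{-4})$, by Lemma \ref{l:rectangle}, which gives $|w_{k+1}-\zeta^\pm|\ge c/n$ since $\psi(w_{k+1})\in R_n$. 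The $O(n^{-3-\alpha})$ error of Lemma \ref{l:zeta} similarly contributes
\[
O\big(n^{-3-\alpha}\cdot n\big)=O(n^{-2-\alpha}),
\]
using the same lower bound $|w_k-\zeta^\pm|,|w_{k+1}-\zeta^\pm|\ge c/n$.

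\textbf{Main obstacle.} The main difficulty is branch and uniformity bookkeeping. Because $\psi(w_k),\psi(w_{k+1})\in R_n$ have real parts in $(0,1)$, the principal branch of the logarithm is consistent, with no $2\pi i$ jumps. The expansions must also hold with constants uniform in $w$ near $0$ at the scale $|w|\gtrsim 1/n$. This is where Lemma \ref{l:rectangle} is used repeatedly.
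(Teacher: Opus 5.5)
Your decomposition is essentially the paper's. Factoring $f(w)-f(\zeta)=(w-\zeta)g(w,\zeta)$ isolates the main term $\frac{1}{2i\pi}\bigl(\log g(w_k,\zeta^+)-\log g(w_k,\zeta^-)\bigr)$, which is the paper's $A'_k$. Your perturbation term, using $\eta^\pm=f_k(\zeta^\pm)-\zeta^\pm=\delta_k n^{-3}+O(n^{-3-\alpha})$ and $|w_{k+1}-\zeta^\pm|\ge c/n$, is the paper's Step~2 and is correct.

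The gap is in the main term. You reach $\frac1n-(1-a)\frac{w_k}{n}+O(w_k^2/n,\,n^{-2})$ and then only \emph{expect} the $n^{-2}$ part to cancel. That cancellation is the whole content of the claimed $O(n^{-2-\alpha})$ error. An $O(n^{-2})$ error per step, summed over about $n$ steps and multiplied by $n$ as in Section~\ref{ss:end}, would shift the Lavaurs phase by $O(1)$. Your fallback of absorbing a $k$-independent $n^{-2}$ term ``into $1/n$'' is not available: the statement fixes the leading term as exactly $1/n$, and the phase formula depends on that. Your heuristic about the symmetric $\zeta^2$ terms points at the right mechanism, but it is not carried out.

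The missing step is the paper's parity argument. Write $\zeta^\pm=m\pm d$ with $m=\frac{a\pi^2}{2n^2}$ and $d=\frac{i\pi}{n}$. Let $h_w(\zeta):=\log g(w,\zeta)$, which is holomorphic on a fixed bidisc around $(0,0)$ since $g(0,0)=f'(0)=1$. The difference $h_w(m+d)-h_w(m-d)$ is odd in $d$, so by Cauchy estimates
\[
h_w(m+d)-h_w(m-d)=2d\,h_w'(m)+O(|d|^3)=2d\,h_w'(0)+O(|d||m|+|d|^3)=\frac{2i\pi}{n}\Bigl(\frac1w-\frac1{f(w)}\Bigr)+O(n^{-3}),
\]
uniformly in $w$. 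Here $h_w'(0)=\partial_\zeta g(w,0)/g(w,0)=\frac1w-\frac1{f(w)}=1-(1-a)w+O(w^2)$. The paper phrases this as: $\xi^\pm(-t)=\xi^\mp(t)$, so $\mathcal A'(x,t)$ is odd in $t$ and has no $t^2$ term. This gives the main term with error $O(w_k^2/n,\,n^{-3})$, with no cross terms of size $w_k/n^2$ left over.

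A smaller point concerns uniformity. The Taylor expansion needs $w_k$ in a fixed small disc. This does not come from compactness in $\mathcal B_f$; it comes from the hypothesis $\psi(w_k)\in R_n$. Indeed, Lemma~\ref{lem:cot} gives $|w_k|\lesssim \frac1n|\cot(\pi\psi(w_k))|+O(n^{-2})\lesssim 1/k_n\to 0$. So the case ``$w_k$ not small'' never arises for large $n$.
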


\begin{proof}
By definition, we have
\[
A_k
=
\frac{1}{2i\pi}\log\frac{w_{k+1}-\zeta^+}{w_{k+1}-\zeta^-}
-
\frac{1}{2i\pi}\log\frac{w_k-\zeta^+}{w_k-\zeta^-}.
\]
Define
\[
A'_k:=
\frac{1}{2i\pi}\log\frac{w_{k+1}-f_k(\zeta^+)}{w_{k+1}-f_k(\zeta^-)}
-
\frac{1}{2i\pi}\log\frac{w_k-\zeta^+}{w_k-\zeta^-}.
\]
We split the proof into two main 
steps, where we estimate $A'_k$ 
(the main term)
and $A_k-A'_k$
(the correction term), respectively.

\medskip

\noindent
\textbf{Step 1. Estimate of the main term $A'_k$.}
We claim that we have
\begin{equation}\label{eq:estimate_A'k}
A'_k
=
\frac{1}{n}
-
(1-a)\frac{w_k}{n}
+
O\Big(\frac{w_k^2}{n},\frac{1}{n^3}\Big).
\end{equation}
for every $n$ sufficiently large and every $0\leq k < n$.
 
\begin{proof}[Proof of Step 1]
Since $w_{k+1}=f_k(w_k)$, we can rewrite $A'_k$ 
as
\[
A'_k
=
\frac{1}{2i\pi}\log\frac{f_k(w_k)-f_k(\zeta^+)}{f_k(w_k)-f_k(\zeta^-)}
-
\frac{1}{2i\pi}\log\frac{w_k-\zeta^+}{w_k-\zeta^-}.
\]
As $f_k$ differs from $f$ by an additive constant, we deduce
\[
A'_k
=
\frac{1}{2i\pi}\log\frac{f(w_k)-f(\zeta^+)}{f(w_k)-f(\zeta^-)}
-
\frac{1}{2i\pi}\log\frac{w_k-\zeta^+}{w_k-\zeta^-}.
\]
Equivalently, we have
\[
A'_k
=
\frac{1}{2i\pi}\log\frac{f(w_k)-f(\zeta^+)}{w_k-\zeta^+}
-
\frac{1}{2i\pi}\log\frac{f(w_k)-f(\zeta^-)}{w_k-\zeta^-}.
\]

Set
\[
\xi^\pm(t):=\pm i\pi t+\frac{a\pi^2}{2}t^2,
\qquad t:=\frac{1}{n},
\]
so that $\zeta^\pm=\xi^\pm(1/n)$. Consider the function
\[
Q(x,\xi):=\frac{f(x)-f(\xi)}{x-\xi}.
\]
Since $f$ is holomorphic, $Q$ extends holomorphically across the diagonal $x=\xi$, and
\[
Q(0,0)=f'(0)=1.
\]
Therefore, on a sufficiently small neighborhood of $(0,0)$, 
we may choose a holomorphic
branch of $\log Q(x,\xi)$. 
We then define
\[
\mathcal A'(x,t)
:=
\frac{1}{2i\pi}\Big(\log Q(x,\xi^+(t))-\log Q(x,\xi^-(t))\Big).
\]
This is a holomorphic function of $(x,t)$ in a neighborhood of $(0,0)$, independent of
$k$ and $n$, and by construction we have
\[
A'_k=\mathcal A'(w_k,1/n).
\]

We now expand $\mathcal A'(x,t)$ at $t=0$. First observe that
$\xi^\pm(-t)=\xi^{\mp}(t)$,
hence
$\mathcal A'(x,-t)=-\mathcal A'(x,t)$.
So $\mathcal A'(x,t)$ is odd in $t$, and in particular all even Taylor coefficients in $t$
vanish.
Next, 
differentiating with respect to $t$, we get
\[
\partial_t \log Q(x,\xi^\pm(t))
=
-(\xi^\pm)'(t)
\left(
\frac{f'(\xi^\pm(t))}{f(x)-f(\xi^\pm(t))}
-
\frac{1}{x-\xi^\pm(t)}
\right).
\]
Since $\xi^\pm(0)=0$, $(\xi^\pm)'(0)=\pm i\pi$, and $f'(0)=1$, we obtain
\[
\partial_t \log Q(x,\xi^\pm(t))\Big|_{t=0}
=
\pm i\pi\left(\frac{1}{x}-\frac{1}{f(x)}\right).
\]
Therefore, we obtain
\[
\partial_t\mathcal A'(x,0)
=
\frac{1}{x}-\frac{1}{f(x)}.
\]
We conclude that 
the Taylor expansion 
of $\mathcal A'$
at $t=0$ has the
form
\begin{equation}\label{eq:development}
\mathcal A'(x,t)
=
\left(\frac{1}{x}-\frac{1}{f(x)}\right)t
+O(t^3),
\end{equation}
where in particular the error is uniform
 for $x$ in a sufficiently small fixed neighbourhood of $0$.

\medskip

We now
verify
 that $x=w_k$ belongs to such a neighbourhood, 
 for every $n$ sufficiently large. 
 Indeed, by the assumption
$\psi(w_k)\in R_n$ and the formula for $\psi^{-1}$, we have
\[
w_k=\psi^{-1}(\psi(w_k))
=
-\frac{\pi}{n}\cot(\pi\psi(w_k))+O\Big(\frac{1}{n^2}\Big).
\]
Since $\psi(w_k)\in R_n$, the distance of $\psi(w_k)$ to $\mathbb Z$ is bounded below by
$k_n/(10n)$, hence
\[
\left|\cot(\pi\psi(w_k))\right| \le C\frac{n}{k_n}
\]
for some constant $C>0$ independent of $k$ and $n$. Thus
\[
w_k=O\Big(\frac{1}{k_n}\Big)+O\Big(\frac{1}{n^2}\Big).
\]
Since $k_n\to\infty$, it follows that for $n$ large enough all such $w_k$ belong to a fixed
small neighbourhood of $0$. 

\medskip
Thanks to the above,
 we can 
 evaluate \eqref{eq:development} 
 at
$(x,t)=(w_k,1/n)$ and obtain
\begin{equation}\label{eq:A'k-pre}
A'_k
=
\left(\frac{1}{w_k}-\frac{1}{f(w_k)}\right)\frac{1}{n}
+
O\Big(\frac{1}{n^3}\Big).
\end{equation}

Using the expansion
$f(w)=w+w^2+aw^3+O(w^4)$,
we get
\[
\frac{1}{f(w)}
=
\frac{1}{w}\,\frac{1}{1+w+aw^2+O(w^3)}
=
\frac{1}{w}\Big(1-w+(1-a)w^2+O(w^3)\Big),
\]
hence
\[
\frac{1}{w}-\frac{1}{f(w)}
=
1-(1-a)w+O(w^2).
\]
Applying this at $w=w_k$ and inserting it into
 \eqref{eq:A'k-pre}, 
we obtain
\eqref{eq:estimate_A'k}.
\end{proof}

\noindent
\textbf{Step 2. Estimate of the correction  term $A_k-A'_k$.}
We claim that we have
\begin{equation}\label{eq:diff}
A_k-A'_k=H_{k,n}(w_{k+1})+O\Big(\frac{1}{n^{2+\alpha}}\Big)
\end{equation}
for every $n$ sufficiently large and every $0\leq k < n$.

\begin{proof}[Proof of Step 2]
By definition, we have
\[
A_k-A'_k
=
\frac{1}{2i\pi}\log\frac{w_{k+1}-\zeta^+}{w_{k+1}-\zeta^-}
-
\frac{1}{2i\pi}\log\frac{w_{k+1}-f_k(\zeta^+)}{w_{k+1}-f_k(\zeta^-)},
\]
which we can rewrite as
\begin{equation}\label{eq:diff-pre}
A_k-A'_k
=
\frac{1}{2i\pi}
\left(
\log\frac{w_{k+1}-\zeta^+}{w_{k+1}-f_k(\zeta^+)}
-
\log\frac{w_{k+1}-\zeta^-}{w_{k+1}-f_k(\zeta^-)}
\right).
\end{equation}

Set
\[
\eta^\pm_{k,n}:=f_k(\zeta^\pm)-\zeta^\pm.
\]
By Lemma
\ref{l:zeta},
 we have
\[
\eta^\pm_{k,n}
=
\frac{\delta_k}{n^3}+O\Big(\frac{1}{n^{3+\alpha}}\Big),
\]
where the error term is uniform
 in $k$. Since the sequence $(\delta_k)$ is uniformly bounded, this implies
\begin{equation}\label{eq:2:eta}
\eta^\pm_{k,n}=O\Big(\frac{1}{n^3}\Big),
\end{equation}
where again the error term is
uniform in $k$.
On the other hand, since $\psi(w_{k+1})\in R_n$, Lemma
\ref{l:rectangle} gives
\begin{equation}\label{eq:2:rect}
|w_{k+1}-\zeta^\pm|\ge \frac{c}{n}
\end{equation}
for some $c>0$ independent of $k$ and $n$. Hence
\[
\left|\frac{\eta^\pm_{k,n}}{w_{k+1}-\zeta^\pm}\right|
\le C\frac{1/n^3}{1/n}
=
O\Big(\frac{1}{n^2}\Big)
\]
uniformly in $k$ and $n$. In particular, for $n$ large enough
we obtain
\[
\log\frac{w_{k+1}-\zeta^\pm}{w_{k+1}-f_k(\zeta^\pm)}
=
-\log\left(1-\frac{\eta^\pm_{k,n}}{w_{k+1}-\zeta^\pm}\right)
=
\frac{\eta^\pm_{k,n}}{w_{k+1}-\zeta^\pm}
+
O\!\left(\frac{|\eta^\pm_{k,n}|^2}{|w_{k+1}-\zeta^\pm|^2}\right).
\]
Using again
\eqref{eq:2:eta} and \eqref{eq:2:rect},
 the
error term above satisfies
\[
O\!\left(\frac{|\eta^\pm_{k,n}|^2}{|w_{k+1}-\zeta^\pm|^2}\right) 
=
O\Big(\frac{1/n^6}{1/n^2}\Big)=O\Big(\frac{1}{n^4}\Big).
\]
Moreover, we also have
\[
\frac{\eta^\pm_{k,n}}{w_{k+1}-\zeta^\pm}
=
\frac{\delta_k}{n^3}\frac{1}{w_{k+1}-\zeta^\pm}
+
O\!\left(\frac{1}{n^{3+\alpha}}\cdot \frac{1}{|w_{k+1}-\zeta^\pm|}\right)
=
\frac{\delta_k}{n^3}\frac{1}{w_{k+1}-\zeta^\pm}
+
O\!\left(\frac{1}{n^{2+\alpha}}\right).
\]
Since $O(n^{-4})=O(n^{-2-\alpha})$ (recall that  we always assume
$0<\alpha<1$), we conclude that
\begin{equation}\label{eq:diff-part}
\log\frac{w_{k+1}-\zeta^\pm}{w_{k+1}-f_k(\zeta^\pm)}
=
\frac{\delta_k}{n^3}\frac{1}{w_{k+1}-\zeta^\pm}
+
O\!\left(\frac{1}{n^{2+\alpha}}\right),
\end{equation}
uniformly in $k$ and $n$.

\medskip

Combining \eqref{eq:diff-pre}  and \eqref{eq:diff-part}, we obtain
\[
A_k-A'_k
=
\frac{\delta_k}{2i\pi n^3}
\left(
\frac{1}{w_{k+1}-\zeta^+}
-
\frac{1}{w_{k+1}-\zeta^-}
\right)
+
O\Big(\frac{1}{n^{2+\alpha}}\Big).
\]
By the definition of $H_{k,n}$, this
gives \eqref{eq:diff} and concludes the proof.
\end{proof}

Combining
the estimates in Steps 1 and 2,
we obtain
\[
A_k
=
\frac{1}{n}
-
(1-a)\frac{w_k}{n}
+
H_{k,n}(w_{k+1})
+
O
\left(\frac{w_k^2}{n},\frac{1}{n^3},\frac{1}{n^{2+\alpha}}\right).
\]
Since $1/n^3=O(1/n^{2+\alpha})$, this simplifies to
\[
A_k
=
\frac{1}{n}
-
(1-a)\frac{w_k}{n}
+
H_{k,n}(w_{k+1})
+
O 
\left(\frac{w_k^2}{n},\frac{1}{n^{2+\alpha}}\right),
\]
which is the desired estimate.
Observe that
all the implicit constants above are independent of $k$ and $n$, and are uniform for
$w$ 
 in a compact subset of $\mathcal B_f$. This proves the local uniformity in
$w$.
\end{proof}

\subsection{The coordinate $\phi$}
We will now 
post-compose
 $\psi$ with a further suitable change of coordinate
to
get rid of the error term in $(1-a)w_k/n$ appearing in 
Proposition \ref{prop_Ak}. Observe that this step is not necessary when $a=1$, as $\phi=\psi$ in this case.

\begin{defi}
We set
\[\chi(W):=W - \frac{1}{n} (1-a) \log \sin(\pi W),
\quad
 \phi := \chi \circ \psi,
 \quad  \text{and}
 \quad
\tilde A_k:= \phi (w_{k+1}) - \phi (w_k).\]
\end{defi}

For convenience,
we will denote
$U_k := \psi (w_k)$
 and  $W_k := \phi (w_k)$.

\begin{lem}\label{l:psi-phi}
The following properties hold.
\begin{enumerate}
\item 
For every $n$ and every $W\in R_n$, the inverse branch of $\phi$
defined by $\phi^{-1}(W):=\psi^{-1}(\chi^{-1}(W))$ satisfies
\[
\phi^{-1}(W)= -\frac{\pi}{n}\cot(\pi W)+O\!\left(\frac1{n^2}\right)
+O\!\left(\frac{\log(n/k_n)}{k_n^2}\right).
\]
\item 
There exists a constant $c>0$ such that
\[
|\phi^{-1}(W)-\zeta_\pm|\ge \frac{c}{n}
\qquad\text{for every $n$ and every $W\in R_n$.}
\]
\end{enumerate}
\end{lem}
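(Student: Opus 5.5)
Both items follow from controlling how far $\chi^{-1}$ moves points of $R_n$, combined with Lemma \ref{lem:cot} and Lemma \ref{l:rectangle}.

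\textbf{Item (1).} We work on the slightly enlarged rectangle $R'_n:=\{\re(W)\in(\frac{k_n}{20n},1-\frac{k_n}{20n}),\ |\im W|<2\}$. On $R'_n$ we use $|\sin(\pi W)|\gtrsim k_n/n$ and $|\log\sin(\pi W)|=O(\log(n/k_n))$, since $\sin(\pi W)$ is bounded above on $R'_n$. Hence
\[
\chi(W)-W=O\Big(\frac{\log(n/k_n)}{n}\Big),\qquad
\chi'(W)-1=-\frac{(1-a)\pi}{n}\cot(\pi W)=O\Big(\frac{1}{k_n}\Big).
\]
Since $k_n\to\infty$, $\chi$ is a small perturbation of the identity with derivative uniformly close to $1$.

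We construct $\chi^{-1}$ on $R_n$ as a fixed point. Write $\chi^{-1}(W)=V$ with
\[
V=W+\frac{1-a}{n}\log\sin(\pi V),
\]
and apply the contraction principle to $V\mapsto W+\frac{1-a}{n}\log\sin(\pi V)$ on the disc of radius $\frac{k_n}{20n}$ about $W$. This disc lies in $R'_n$ because $\log(n/k_n)/n=o(k_n/n)$. The map has Lipschitz constant $O(1/k_n)<1$ there and sends the disc into itself. It follows that $V=\chi^{-1}(W)$ exists, is holomorphic, and satisfies $|V-W|=O(\log(n/k_n)/n)$.

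Then $\phi^{-1}(W)=\psi^{-1}(V)=-\frac{\pi}{n}\cot(\pi V)+O(n^{-2})$ by Lemma \ref{lem:cot}. By the mean value theorem,
\[
\Big|\frac{\pi}{n}\cot(\pi V)-\frac{\pi}{n}\cot(\pi W)\Big|\le \frac{\pi^2}{n}\,|V-W|\,\sup|\sin(\pi\,\cdot)|^{-2},
\]
where the supremum is over the segment $[W,V]\subset R'_n$ and is $O(n^2/k_n^2)$. The difference is therefore
\[
O\Big(\frac1n\cdot\frac{\log(n/k_n)}{n}\cdot\frac{n^2}{k_n^2}\Big)=O\Big(\frac{\log(n/k_n)}{k_n^2}\Big),
\]
which gives the claimed expansion.

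\textbf{Item (2).} Since $V=\chi^{-1}(W)$ lies in a rectangle like $R'_n$ with $|\im V|\le 2$, the argument of Lemma \ref{l:rectangle} applies to $V$. Indeed $\cot(\pi\cdot)$ omits $\pm i$, and on $\{|\im V|\le 2\}$ its values stay uniformly away from $\pm i$ (as $\im V\to\pm\infty$, $\cot\to\mp i$, but this does not happen on a bounded strip). Thus $|\cot(\pi V)\pm i|\ge c$. Combined with $\psi^{-1}(V)=-\frac{\pi}{n}\cot(\pi V)+\frac{a\pi^2}{2n^2}$, this gives $|\phi^{-1}(W)-\zeta^\pm|\ge c/n$.

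\textbf{Main obstacle.} The delicate point is the well-definedness of the inverse branch, namely that the contraction disc stays inside $R'_n$, away from the zeros of $\sin$ and within the domain of the principal logarithm. I would settle the branch issue by noting that $\sin(\pi V)$ avoids the negative real axis when $\re V\in(0,1)$ and $|\im V|$ is bounded, or else by fixing the branch continuously along the contraction.
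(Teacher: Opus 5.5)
Your proposal is correct. For item (1) it is the paper's argument: show $|\chi^{-1}(W)-W|=O(\log(n/k_n)/n)$, then use $|\frac{d}{dZ}\cot(\pi Z)|=O(n^2/k_n^2)$ along $[W,\chi^{-1}(W)]$. Your contraction step, however, fills a hole the paper leaves open. The paper bounds $\log\sin(\pi W)$, but inverting $W=U-\frac{1-a}{n}\log\sin(\pi U)$ requires a bound on $\log\sin(\pi U)$. The derivative estimate also requires the segment $[W,U]$ to stay at distance $\gtrsim k_n/n$ from $\mathbb Z$. Your disc argument localizes $U$ a priori, and the observation $\re\sin(\pi V)>0$ for $\re V\in(0,1)$ settles the branch question.

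For item (2) your route differs from the paper's. The paper derives it from item (1) together with the bound $|\cot(\pi W)\pm i|\ge c$ at $W$ itself. That only works if the error $O(n^{-2})+O(\log(n/k_n)/k_n^2)$ in (1) is $o(1/n)$, i.e.\ under the second condition in \eqref{eq:cond-kn}. You instead use the exact identity $\phi^{-1}(W)-\zeta^{\pm}=-\frac{\pi}{n}\bigl(\cot(\pi V)\pm i\bigr)$ with $V=\chi^{-1}(W)$. This needs only $|\im V|$ bounded, so it does not depend on the size of the error in (1).

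One thing you should make explicit: your map sends the disc of radius $k_n/(20n)$ into itself only if $\log(n/k_n)=o(k_n)$. The containment of the disc in $R'_n$ is automatic from the choice of radius; it is the self-map property that needs this condition. The condition does not follow from the standing assumptions of Section \ref{s:coordinates}, since $k_n\to\infty$, $k_n=o(n)$ allows $k_n=\lfloor\log\log n\rfloor$. It does follow from \eqref{eq:cond-kn}, because $\log(n/k_n)/k_n=\frac{n\log(n/k_n)}{k_n^2}\cdot\frac{k_n}{n}$. The paper's proof of (1) tacitly uses the same condition, so this is a hypothesis to state, not a flaw specific to your approach.
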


\begin{proof}
(1)
Fix $W\in R_n$ and set $U:=\chi^{-1}(W)$, so that $\phi^{-1}(W)=\psi^{-1}(U)$.
Since $\re (W) \in\bigl(\frac{k_n}{10n},1-\frac{k_n}{10n}\bigr)$ and $|\im( W)|<1$,
 we have
$|\sin(\pi W)|\asymp {\rm dist}(W,\mathbb Z)\gtrsim \frac{k_n}{n}$, hence
\[|\log\sin(\pi W)|\lesssim 1+\log(n/k_n),\]
 where the implicit constants
are independent of $n$ and $W$.
Therefore, from $W=U-\frac{1-a}{n}\log\sin(\pi U)$ we obtain
\[
U=W+O\!\left(\frac{1+\log(n/k_n)}{n}\right).
\]

By Lemma
\ref{lem:cot},
we have
\[
\phi^{-1}(W)=\psi^{-1}(U)
=-\frac{\pi}{n}\cot(\pi W)+O\!\left(\frac{1}{n^2}\right)
+O\!\left(\frac{1+\log(n/k_n)}{k_n^2}\right),
\]
where we used that $|\cot'(\pi Z)|=O\bigl((n/k_n)^2\bigr)$ on $R_n$ and the previous bound on $|U-W|$.

\medskip

(2)
The bound follows
using the development
 $\zeta_\pm=\pm i\pi/n+O(1/n^2)$
 (see Lemma \ref{l:zeta})
  and the lower bound
for
$|\cot(\pi W)\pm i|$
given by Lemma \ref{l:rectangle}.
\end{proof}

\begin{prop}\label{prop:estimate-central}
For every $n\in \mathbb N$, 
$0\leq k < n$, and
$w \in \mathcal B_f$ with $W_k, W_{k+1}\in R_n$
	we have
	$$\tilde A_k = \frac{1}{n}+  H_{k,n}(-\frac{\pi}{n}\cot(\pi W_{k+1})) 
	+
	O\Big(\frac{w^2_k}{n},
	 \frac{1}{n^{2+\alpha}},
	 \frac{\log(n/k_n)}{n\,k_n^2}\Big).$$
\end{prop}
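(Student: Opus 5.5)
The plan is to write $\tilde A_k = \chi(U_{k+1})-\chi(U_k)$ and split it as
\[
\tilde A_k = A_k - \frac{1-a}{n}\Big(\log\sin(\pi U_{k+1})-\log\sin(\pi U_k)\Big).
\]
I then insert the expansion of $A_k$ from Proposition \ref{prop_Ak}. One point needs care first: Proposition \ref{prop_Ak} requires $U_k,U_{k+1}\in R_n$, while here we are given $W_k,W_{k+1}\in R_n$. By the estimate in the proof of Lemma \ref{l:psi-phi}, $U=W+O((1+\log(n/k_n))/n)$, which is much smaller than the margin $k_n/(10n)$. So either the estimates behind Proposition \ref{prop_Ak} hold verbatim on a slightly enlarged rectangle (they only used Lemma \ref{l:rectangle} and $|\cot|\lesssim n/k_n$), or one shrinks the constant $10$. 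I will take the first route and note that all bounds survive.

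\textbf{Main cancellation.} Since $A_k = \frac1n - (1-a)\frac{w_k}{n} + O(\cdot)$ with $A_k=U_{k+1}-U_k=O(1/n)$, I Taylor expand
\[
\log\sin(\pi U_{k+1})-\log\sin(\pi U_k) = \pi\cot(\pi U_k)\,A_k + O\big(A_k^2 \sup|\csc^2(\pi\cdot)|\big).
\]
On $R_n$ we have $|\csc^2|\lesssim (n/k_n)^2$, so the remainder is $O(1/k_n^2)$; after multiplying by $\frac{1-a}{n}$ it becomes $O(1/(nk_n^2))$. For the main term, Lemma \ref{lem:cot} gives $\pi\cot(\pi U_k)= -n w_k + O(1/n)$. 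Hence
\[
\frac{1-a}{n}\,\pi\cot(\pi U_k)\,A_k = -(1-a)\frac{w_k}{n} + O\Big(\frac{w_k^2}{n}\Big) + O\Big(\frac{1}{n^3}\Big)+\dots .
\]
Here I use $A_k = 1/n+O(w_k/n)+O(1/n^{2})$ (the $H$ term is $O(1/n^2)$ because $|w_{k+1}-\zeta^\pm|\ge c/n$). This exactly cancels the $-(1-a)w_k/n$ term coming from $A_k$.

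\textbf{The $H$-term.} It remains to replace $H_{k,n}(w_{k+1})$ by $H_{k,n}(-\frac{\pi}{n}\cot(\pi W_{k+1}))$. By Lemma \ref{l:psi-phi}(1),
\[
w_{k+1}=\phi^{-1}(W_{k+1}) = -\tfrac{\pi}{n}\cot(\pi W_{k+1}) + O(1/n^2) + O\big(\log(n/k_n)/k_n^2\big).
\]
Both points stay at distance $\gtrsim 1/n$ from $\zeta^\pm$, by Lemma \ref{l:rectangle} and Lemma \ref{l:psi-phi}(2). So $|H'_{k,n}|\lesssim n^{-3}\cdot n^2 = 1/n$ on the segment joining them (a convexity check in the $\cot$ picture, or use the disk of radius $c/(2n)$). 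The substitution error is therefore $O(1/n^3)+O(\log(n/k_n)/(nk_n^2))$, both of which are absorbed into the stated error.

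\textbf{Main obstacle.} The delicate step is the cancellation bookkeeping in the second paragraph. The remainders involve $(n/k_n)^2$ factors from $\csc^2$ and $\log(n/k_n)$ factors from replacing $U$ by $W$, and I must check that each one lands in $O(w_k^2/n,\ n^{-2-\alpha},\ \log(n/k_n)/(nk_n^2))$. In particular, the cross term $w_k\cdot A_k$'s $O(w_k/n)$ part gives $O(w_k^2/n)$, which is fine. The $H$ contribution inside $A_k$ multiplied by $\cot$ gives $\frac1n\cdot n|w_k|\cdot n^{-2}=O(|w_k|/n^2)$. Since $|w_k|\lesssim 1/k_n$ and $k_n\to\infty$, this is $o(1/n^{2})$, and it is absorbed using $|w_k|/n^2 \le w_k^2/n + 1/n^3$.
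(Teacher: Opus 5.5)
Your proposal is correct and follows the paper's proof. It uses the same splitting $\tilde A_k = A_k - \frac{1-a}{n}\bigl(\log\sin(\pi U_{k+1})-\log\sin(\pi U_k)\bigr)$, the same cancellation of the $(1-a)w_k/n$ term via $\pi\cot(\pi U_k)=-nw_k+O(1/n)$, and the same mean-value replacement of $H_{k,n}(w_{k+1})$ by $H_{k,n}\bigl(-\frac{\pi}{n}\cot(\pi W_{k+1})\bigr)$.

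The only difference is in the second-order term of the $\log\sin$ expansion. You bound it by $A_k^2\sup|\csc^2(\pi\,\cdot)|=O(1/k_n^2)$, whereas the paper uses the addition formula to get the sharper $O(w_k^2,1/n^2)$. Your cruder bound is still enough: after the factor $\frac{1-a}{n}$ it is absorbed into $\frac{\log(n/k_n)}{n k_n^2}$, since $\log(n/k_n)\ge\log 2$.

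Your observation that Proposition \ref{prop_Ak} needs $U_k,U_{k+1}\in R_n$ rather than $W_k,W_{k+1}\in R_n$ addresses a point the paper passes over silently. Note that comparing the shift $O\bigl((1+\log(n/k_n))/n\bigr)$ with the margin $k_n/(10n)$ uses $\log(n/k_n)=o(k_n)$, which holds under the later choice \eqref{eq:cond-kn}.
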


\begin{proof}
 By
 the definitions of
 $\phi$, 
 $\tilde A_k$, and $A_k$,  
 we see that
 \begin{equation}\label{eq:forAktilde}
 \begin{aligned}
\tilde A_k & = 
W_{k+1} - W_k=
A_k - \frac{1}{n}(1-a)
\big(
\log \sin (\pi U_{k+1}) - \log \sin (\pi U_k)
\big) \\
&=
\frac{1}{n}- (1-a) \frac{w_k}{n}+ H_{k,n}(w_{k+1})
- \frac{1}{n}(1-a)
\log \frac{ \sin (\pi U_{k+1})}{ \sin (\pi U_k)}
+O\Big(\frac{w_k^2}{n},
 \frac{1}{n^{2+\alpha}}\Big).
\end{aligned} 
 \end{equation}
Hence, we need to
compare 
$H_{k,n}(w_{k+1})$ with 
$ H_{k,n}(-\frac{\pi}{n}\cot(\pi W_{k+1}))$ and
 estimate the term
 $\log \frac{\sin (\pi U_{k+1})}{\sin (\pi U_k)}$.
 For convenience, we will set
 $\widehat w_k:=-\frac{\pi}{n}\cot(\pi W_k)$.

\medskip

 \noindent {\bf Claim 1.}
We have
\[
H_{k,n}(w_{k+1})=H_{k,n}(\widehat w_{ k+1})+O\!\left(\frac1{n^3},
\frac{\log(n/k_n)}{n\,k_n^2}\right).
\]

 \begin{proof}
By the definition
\eqref{eq:Hkn} of $H_{k,n}(w)$, we have
\[
H'_{k,n}(w)=
\frac{\delta_k}{2i\pi n^3}\left(-\frac{1}{(w-\zeta_+)^2}+\frac{1}{(w-\zeta_-)^2}\right).
\]
Since
$W_{k+1}\in R_n$, by Lemma
\ref{l:psi-phi}(2)
 we have $|w_{k+1}-\zeta_\pm|\ge c/n$.
Moreover, $\widehat w_{k+1}=\phi^{-1}(W_{k+1})+O(\frac{1}{n^2},\frac{\log(n/k_n)}{k_n^2})$ by Lemma

\ref{l:psi-phi}(1),
hence also
$|\widehat w_{k+1}-\zeta_\pm|\ge c'/n$ for some $c'>0$
(using Lemmas
\ref{l:rectangle} and \ref{l:psi-phi}(2)).
Therefore, for all $w$ on the segment joining $w_{k+1}$ to $\widehat w_{k+1}$ we have
$|w-\zeta_\pm|\ge c''/n$
for some $c''>0$, 
and thus
\[
|H'_{k,n}(w)|\le \frac{C}{n^3}\left(\frac{n^2}{(c'')^2}+\frac{n^2}{(c'')^2}\right)\le \frac{C_1}{n},
\]
for some positive constants $C,C_1$, where we used 
the fact that the sequence
 $(\delta_k)$ is uniformly bounded.
By the mean value theorem, we have
\[
|H_{k,n}(w_{k+1})-H_{k,n}(\widehat w_{k+1})|
\le \sup_{[w_{k+1},\widehat w_{k+1}]}|H'_{k,n}|\cdot |w_{k+1}-\widehat w_{k+1}|
\le \frac{C_1}{n}\cdot O\!\left(\frac{1}{n^2}, \frac{\log(n/k_n)}{k_n^2}\right),
\]
which gives the assertion.
\end{proof}

\noindent {\bf Claim 2.}
We have $ \log \sin (\pi U_{k+1}) - \log \sin (\pi U_k) = - w_k + O({w^2_k}, \frac{1}{n^2})$.

\begin{proof}
We have 
\begin{equation}\label{eq:cl2-0}
\begin{aligned}
 \log \sin (\pi U_{k+1}) - \log \sin (\pi U_k)
 & = 
 \log \frac{\sin (\pi ( U_{k}+A_k))}{\sin (\pi U_k)}\\
 &= 
 \log \frac{\cos (\pi A_k)\sin (\pi U_k) + \sin (\pi A_k) \cos (\pi U_k)}{\sin (\pi U_k)}\\
 & = 
  \log (\cos (\pi A_k) + \sin (\pi A_k) \cot (\pi U_k)).
 \end{aligned}
\end{equation}
 Since $A_k=O(1/n)$
(see Proposition \ref{prop_Ak}),
 we can expand
\[
\cos(\pi A_k)=1+O(A_k^2)\quad
\text{and}\quad
\sin(\pi A_k)=\pi A_k+O(A_k^3),
\]
which give
\begin{equation}\label{eq:cl2-1}
\cos(\pi A_k)+\sin(\pi A_k)\cot(\pi U_k)
=1+\pi A_k\cot(\pi U_k)+O\!\bigl(A_k^2(1+\cot^2(\pi U_k))\bigr).
\end{equation}
Recalling that $U_k = \psi (w_k)$,
we deduce 
from 
Lemma \ref{lem:cot}
that
\[
w_k=\psi^{-1}(U_k)=-\frac{\pi}{n}\cot(\pi U_k)+O\!\left(\frac1{n^2}\right),
\quad\text{hence}\quad
\cot(\pi U_k)=-\frac{n}{\pi}w_k+O\!\left(\frac1n\right).
\]
Combining this with
expansion of $A_k$ given in Proposition
\ref{prop_Ak} gives
\begin{equation}\label{eq:cl2-2}
\begin{aligned}
\pi A_k\cot(\pi U_k)
& =\pi\Bigl(\frac1n-\frac{(1-a)w_k}{n}
+O\Bigl(\frac{w_k^2}{n}
 \frac1{n^3}\Bigr)\Bigr)
\Bigl(-\frac{n}{\pi}w_k+O\!\Bigl(\frac1n\Bigr)\Bigr)\\
& =-w_k+(1-a)w_k^2+O(w_k^2)+O\!\left(\frac1{n^2}\right).
\end{aligned}
\end{equation}
Moreover, we also have
\begin{equation}\label{eq:cl2-3}
A_k^2(1+\cot^2(\pi U_k))
=O\!\left(\frac1{n^2}\right)+O\!\left(\frac{\cot^2(\pi U_k)}{n^2}\right)
=O\!\left(\frac1{n^2}\right)+O(w_k^2),
\end{equation}
using again
the expression $\cot(\pi U_k)=-(n/\pi)w_k+O(1/n)$.
Therefore, we deduce from
\eqref{eq:cl2-1},
 \eqref{eq:cl2-2}, and \eqref{eq:cl2-3} that
\[
\cos(\pi A_k)+\sin(\pi A_k)\cot(\pi U_k)
=1-w_k+(1-a)w_k^2+O(w_k^2)+O\!\left(\frac1{n^2}\right).
\]
We conclude 
from \eqref{eq:cl2-0} and the above expression that
\[
\log\sin(\pi U_{k+1})-\log\sin(\pi U_k)
=-w_k+O(w_k^2)+O\!\left(\frac1{n^2}\right),
\]
as claimed
\end{proof}

The assertion follows from \eqref{eq:forAktilde} and 
the above claims.
\end{proof}

\section{Proof of Theorem \ref{th:main}}\label{s:proof}

In this section we prove 
our main Theorem \ref{th:main}. The proof 
follows the orbits and is divided into three main steps. In Section 
\ref{ss:entering} we consider the first  $\sim k_n$ points of the orbits,
which still look close to
the autonomous system $f$.
In Section \ref{ss:passing} we study the central part of the orbit,
from $k\sim k_n$ to $k\sim n-k_n$, where the perturbations $\varepsilon_{k,n}$ dominate. This is 
where the estimates of the previous sections will be mainly used 
and 
the accumulated errors
 leading to the formula for the phase in the statement will appear.
 Finally, 
 in Section \ref{ss:exiting}
 we consider the last $\sim k_n$ points of the orbit, for which again the main contribution is given by the 
 dynamics of $f$.
 In Section \ref{ss:end} we will put all the estimates together, completing the proof of Theorem \ref{th:main}.

\medskip

We fix below a
 point $w_0\in \mathcal B_f$. All the estimates will be uniform
for $w$ in a given compact neighbourhood of $w_0$ in $\mathcal B_f$.
From now on, we will assume that the sequence $k_n$ satisfies
$n = o(k_n^2/{ \log (n/k_n)})$ 
and
$k_n =o(n^{2/3})$, i.e., that
\begin{equation}\label{eq:cond-kn}
\frac{k_n^3}{n^2},\, \frac{n
{\log (n/k_n)}}{k_n^2}\to 0, \qquad n\to \infty.
\end{equation}
For instance, we can
fix $1/2 < \beta < 2/3$ and 
take $k_n := \lfloor n^{\beta}\rfloor$.
Observe that, with these assumptions, the error term in 
Proposition \ref{prop:estimate-central} becomes $o(1/n^2)$.

\subsection{Entering the eggbeater}\label{ss:entering}
In this section we compare the readings
 of the first approximately $k_n$ points of an orbit
in the
approximate Fatou coordinate $\phi$ introduced above and
the actual Fatou
 coordinate
  $\phi^\iota$ (see Section \ref{s:prelim}). We start with the following lemma about
  $\phi^\iota$.

\begin{lem}\label{l:phi-iota-kn}
	We have
$w_{k_n} =- \frac{1+o(1)}{k_n}$
and	
	 $\phi^\iota(w_{k_n})=\phi^\iota(w_0)+k_n + o(1)$. 
\end{lem}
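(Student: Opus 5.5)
The idea is to compare the perturbed orbit $w_0,\dots,w_{k_n}$ with the unperturbed orbit $f^j(w_0)$, $0\le j\le k_n$. On this time scale the perturbations add up to only $k_n\varepsilon^2\sim k_n/n^2$, and the parabolic dynamics inside the attracting petal damps errors enough to keep them negligible. I will work in the attracting Fatou coordinate.

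\textbf{Step 1: reach the petal.} Since $w_0$ lies in a compact subset of $\mathcal B_f$, there is a fixed $N$ with $f^N(w_0)$ well inside $P^\iota$. The maps $f_k$ converge to $f$ uniformly as $n\to\infty$, since $\varepsilon_{k,n}^2=O(1/n^2)$. Hence $w_N=f^N(w_0)+O(1/n^2)$, uniformly in $w_0$, and so $\phi^\iota(w_N)=\phi^\iota(w_0)+N+O(1/n^2)$.

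\textbf{Step 2: induction in Fatou coordinates.} Set $Z_j:=\phi^\iota(w_j)$ for $N\le j\le k_n$. While $w_j$ stays in a slightly smaller petal, I will use
\[
Z_{j+1}=\phi^\iota(f(w_j)+\varepsilon_{j,n}^2)=Z_j+1+(\phi^\iota)'(f(w_j))\,\varepsilon_{j,n}^2+O\big(|(\phi^\iota)''|\,n^{-4}\big).
\]
From \eqref{eq:phi-precise} and Cauchy estimates, $(\phi^\iota)'(w)\sim 1/w^2$. If $Z_j=Z_N+(j-N)+o(1)$, then $w_j\approx -1/Z_j\approx -1/j$, so the error at step $j$ is $O(j^2/n^2)$. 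The accumulated error up to time $k_n$ is therefore
\[
O\Big(\sum_{j\le k_n} j^2/n^2\Big)=O(k_n^3/n^2)=o(1),
\]
by \eqref{eq:cond-kn}. This is exactly why the condition $k_n=o(n^{2/3})$ was imposed. A bootstrap (continuity) argument closes the induction: as long as the accumulated error is $\le 1$, the point $Z_j$ stays in a right half-plane where the asymptotics for $(\phi^\iota)^{-1}$ hold, namely $w=-1/Z+O(\log|Z|/|Z|^2)$, so the derivative bound applies. The $O(n^{-4})$ second-order terms need $|(\phi^\iota)''|\lesssim j^3$, which sums to $O(k_n^4/n^4)=o(1)$. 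Combining with Step 1 gives $\phi^\iota(w_{k_n})=\phi^\iota(w_0)+k_n+o(1)$.

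\textbf{Step 3: the first assertion.} Inverting $\phi^\iota$ at the point $Z=\phi^\iota(w_0)+k_n+o(1)$, whose modulus tends to infinity, gives
\[
w_{k_n}=-\frac{1}{Z}+O\Big(\frac{\log k_n}{k_n^2}\Big)=-\frac{1+o(1)}{k_n}.
\]

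The main obstacle is the bootstrap in Step 2. One needs uniform control of $(\phi^\iota)'$ on a region that reaches toward the parabolic point, so that the inverse Fatou asymptotics and the derivative bound hold along the whole perturbed orbit. The care needed is in keeping track that the orbit stays in the petal, away from the boundary, and that the errors remain uniform in $w_0$.
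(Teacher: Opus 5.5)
Your proposal is correct and follows essentially the same route as the paper: you work in the attracting Fatou coordinate, bound the error at each step by $O\big(1/(n^2|w_{j+1}|^2)\big)=O(j^2/n^2)$ using $w_j\approx -1/j$, and sum these to $O(k_n^3/n^2)=o(1)$ by \eqref{eq:cond-kn}. Your Step 1 (reaching the petal in a fixed number of steps) and your explicit bootstrap spell out details that the paper's short induction leaves implicit.
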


\begin{proof}
Recall that $w_{k+1} = f(w_k) + \eps^{2}_{k,n}$, and that $|\eps_{k,n}^2| = O(\frac{1}{n^2})$.
Moreover, we have $\phi^\iota(f(w)) = \phi^\iota (w)+1$ and $(\phi^\iota)'(w_k)=O(\frac{1}{w_k^2})$.
These identities and estimates give
\[
\begin{aligned}
\phi^\iota (w_{k+1}) &  =
 \phi^\iota ( f_k(w_k)) =
\phi^\iota( f(w_k) + \eps^2_{k,n})\\
& =
\phi^\iota (f(w_k))  + O( | \eps^2_{k,n}| |(\phi^\iota)' (f(w_k))|)\\
&= \phi^\iota (w_k) + 1 + O\Big(\frac{1}{ |w_{k+1}|^2 \cdot n^2}\Big).
\end{aligned}
\]
By induction and the asymptotic
\eqref{eq:phi-precise} of $\phi^\iota$, 
the above shows that $w_{k} =-\frac{1+o(1)}{k}$
for every $0\leq k \leq k_n$.
In particular, 
this holds for $k=k_n$
and we have 
\[
\phi^\iota (w_{k_n}) = 
\phi^\iota (w_0) + k_n + O\Big(\frac{k_n^3}{n^2}\Big).
\]
The assertion follows
from the assumption 
\eqref{eq:cond-kn}
 on $k_n$.
\end{proof}

\begin{prop}\label{p:entering}
	We have
	\begin{equation}
	\label{eq:formula-entering}
	n
 \phi(w_{k_n}) = \phi^\iota(w_{k_n}) 
- 
  (1-a) \log (\pi /n)+ o(1).
 \end{equation}
Moreover, 
we have $W_{k_n}, W_{k_n+1}\in R_n$ for every $n$ sufficiently large.
\end{prop}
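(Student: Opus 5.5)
Since $w_{k_n}=-\frac{1+o(1)}{k_n}$ is small but much larger than $1/n$, the plan is to expand $\phi=\chi\circ\psi$ directly at $w=w_{k_n}$ and compare with the asymptotic \eqref{eq:phi-precise} of $\phi^\iota$. Set $w:=w_{k_n}$. Then $|w|\asymp 1/k_n$ with $k_n=o(n)$, so $|\zeta^\pm/w|\asymp k_n/n\to0$. Expanding the logarithm gives
\[
\frac{w-\zeta^+}{w-\zeta^-}=\frac{1-\zeta^+/w}{1-\zeta^-/w},
\qquad
\log\frac{w-\zeta^+}{w-\zeta^-}=-\frac{\zeta^+-\zeta^-}{w}+O\Big(\frac{1}{n^2|w|^2}\Big)=-\frac{2i\pi}{nw}+O\Big(\frac{k_n^2}{n^2}\Big),
\]
modulo the choice of branch. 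Since $w$ lies close to the negative real axis (coming from the left), the quotient is close to $1$. The principal branch then gives $\psi(w)$ close to $0$ (not $1$), with $\re\psi(w)\approx 1/(\pi\cdot n|w|)\cdot\pi/\pi\approx k_n/(\pi n)$ to leading order. Hence $n\psi(w)=-\frac{1}{w}+O(k_n^2/n)$, and this error is $o(1)$ precisely because of \eqref{eq:cond-kn}, since $k_n^3/n^2\to0$ implies $k_n^2/n\to0$. To get the $o(1)$ error at the right scale I will keep the second-order term carefully. The next term in the expansion is of order $(\zeta^{+2}-\zeta^{-2})/w^2$, and $\zeta^{+2}-\zeta^{-2}=O(1/n^3)$ because $\zeta^\pm$ are conjugate-symmetric up to $O(n^{-2})$. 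The quadratic term therefore actually contributes $O(k_n^2/n^3)$ to $\psi$, and after multiplying by $n$ the error is $O(k_n^2/n^2)=o(1)$.

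Next I apply $\chi$. We have $n\phi(w)=n\psi(w)-(1-a)\log\sin(\pi\psi(w))$. Since $\psi(w)=-\frac{1}{nw}+o(1/n)$ is small, $\sin(\pi\psi(w))=\pi\psi(w)(1+O(\psi^2))$. Thus
\[
\log\sin(\pi\psi(w))=\log\frac{\pi}{n}+\log\Big(-\frac1w\Big)+o(1)=\log\frac{\pi}{n}-\log(-w)+o(1),
\]
with principal branches compatible because $-1/w$ is near the positive real axis. Combining,
\[
n\phi(w)=-\frac1w+(1-a)\log(-w)-(1-a)\log\frac{\pi}{n}+o(1).
\]
Comparing with \eqref{eq:phi-precise}, $\phi^\iota(w)=-\frac1w+(1-a)\log(-w)+o(1)$ as $w\to0$, and this yields \eqref{eq:formula-entering}. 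The main point to check is uniformity: \eqref{eq:phi-precise} is only known as $w\to 0$ inside the attracting petal, and $w_{k_n}$ does tend to $0$ there by Lemma~\ref{l:phi-iota-kn}, uniformly for $w_0$ in a compact set.

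For the membership $W_{k_n},W_{k_n+1}\in R_n$ I use the above. We have $W_{k_n}=\phi(w_{k_n})=\frac{1}{n}\big(\phi^\iota(w_0)+k_n+(1-a)\log(n/\pi)+O(1)\big)$. Its real part is therefore $\frac{k_n}{n}(1+o(1))$, which exceeds $k_n/(10n)$ and lies below $1-k_n/(10n)$, and its imaginary part is $O(\log n/n)\to0$. The same argument applies to $w_{k_n+1}$, because Lemma~\ref{l:phi-iota-kn} holds at time $k_n+1$ as well: its proof works for every $k\le k_n+1$, giving $w_{k_n+1}=-(1+o(1))/k_n$. I expect the main obstacle to be the bookkeeping of branches of $\log$ and of the $O(1/n^2)$ asymmetry of $\zeta^\pm$. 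In particular, one must verify that the term $a\pi^2/(2n^2)$ in $\zeta^\pm$ cancels in $\zeta^+-\zeta^-$ and only enters at relative order $k_n/n^2$. There is one subtlety: the expansion of $\psi^{-1}$ in Lemma~\ref{l:psi-phi} is not needed here, since we expand $\phi$ forward rather than its inverse.
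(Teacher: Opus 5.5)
Your route is the paper's: expand $\psi$ at $w=w_{k_n}$ using $|\zeta^\pm/w_{k_n}|\asymp k_n/n$, expand $\log\sin(\pi\psi(w_{k_n}))$, and compare with \eqref{eq:phi-precise}. Two of your choices differ from the paper and are fine. The multiplicative expansion $\sin(\pi\psi)=\pi\psi(1+O(\psi^2))$ is valid. Handling $W_{k_n+1}$ by rerunning the argument at time $k_n+1$, instead of the paper's one-step estimate $1/w_{k_n+1}=1/w_{k_n}-1+O(w_{k_n})$, is also valid, since the proof of Lemma~\ref{l:phi-iota-kn} covers $k\le k_n+1$.

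\textbf{The gap: the error estimate for $n\psi(w_{k_n})$ is wrong as written.} The implication ``$k_n^3/n^2\to0\Rightarrow k_n^2/n\to0$'' is false; take $k_n=n^{3/5}$. In fact the second condition in \eqref{eq:cond-kn}, together with $\log(n/k_n)\ge\log 2$, forces $k_n^2/n\to\infty$, so your crude bound $O(k_n^2/n)$ diverges. Your refinement treats only the quadratic term. Write
\[
\log\Big(1-\frac{\zeta^+}{w}\Big)-\log\Big(1-\frac{\zeta^-}{w}\Big)=-\sum_{m\ge1}\frac{(\zeta^+)^m-(\zeta^-)^m}{m\,w^m}.
\]
The $m=2$ term is indeed $O(n^{-3}|w|^{-2})$, but the odd terms do not cancel. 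We have $(\zeta^+)^3-(\zeta^-)^3=-2i\pi^3/n^3+O(n^{-5})$. So the $m=3$ term contributes $\frac{\pi^2}{3n^2w^3}$, of modulus $\asymp k_n^3/n^2$, to $n\psi(w_{k_n})$. Equivalently, $n\psi(w)=-\frac{n}{\pi}\arctan\frac{\pi}{n(w-c)}$ with $c=a\pi^2/(2n^2)$, and the cubic Taylor term of this has that size.

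Your claimed final error $O(k_n^2/n^2)$ is therefore incorrect. The correct statement is $n\psi(w_{k_n})=-1/w_{k_n}+O(k_n^2/n^2+k_n^3/n^2)$, since the tail $m\ge3$ is $O(n(k_n/n)^3)$. This is $o(1)$ precisely because of the first condition $k_n^3/n^2\to0$ in \eqref{eq:cond-kn}; that is where this hypothesis is genuinely used in the proposition. The paper's own display records only $O(k_n^2/n^2)$ and omits the same cubic term, but its hypothesis covers it.

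With this correction the rest of your argument goes through. One harmless slip: $\re\psi(w_{k_n})\approx k_n/(\pi n)$ should read $k_n/n$, consistent with your own $n\psi(w)=-1/w+o(1)$.
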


\begin{proof}
Let us start proving that
\begin{equation}\label{eq:estimate-psi-wkn}
n\psi  (w_{k_n}) = -\frac{1}{w_{k_n}} + o(1),
\qquad n\to \infty.
\end{equation}
Indeed, for all $k$
 we have
\[
n\psi
 (w_{k})
 =
n \frac{1}{2i\pi } \log \frac{w_k-\zeta^+}{w_k-\zeta^-}
= n \frac{1}{2i\pi } \log \frac{1-\frac{\zeta^+}{w_k}}{1-\frac{\zeta^-}{w_k}}
=n \frac{1}{2i\pi } 
\Big(
\log (1-\frac{\zeta^+}{w_k}) - \log (1-\frac{\zeta^-}{w_k}) \Big).
\]
By Lemma \ref{l:zeta}, we have
 $\zeta^+ -\zeta^- = \frac{2i\pi}{n}$. Moreover, 
 by Lemma \ref{l:phi-iota-kn} we also have
 $|1/ w_{k_n}|\sim k_n
 =o (n)$.
 Hence,
$\frac{\zeta^{\pm}}{w_{k_n}} = o(1)$ and
\[
n\psi
 (w_{k_n})
= n \frac{1}{2i\pi} \frac{\zeta^+ - \zeta^{-}}{w_{k_n}} +  n O(\frac{1}{ n^3 \cdot |w_{k_n}|^2})
=- \frac{1}{w_{k_n}} 
+ O\Big(\frac{k_n^2}{n^2}\Big)
=- \frac{1}{w_{k_n}} + o(1).
\]

Recall that $\phi = \chi \circ \psi$
and that $\phi^\iota$ satisfies \eqref{eq:phi-precise}.
 In order to establish
 \eqref{eq:formula-entering},
 we need to prove that
  \[\log \sin (\pi \psi (w_{k_n}) )=- \log (-w_{k_n}) + {\log (\pi/ n)} 
+  o(1)\]
  (observe that we do not need to do this estimate if $a=1$, as $\phi=\psi$ in that case).
By the 
estimate
\eqref{eq:estimate-psi-wkn}
for $\psi(w_{k_n})$
 and the facts that $w_{k_n}\sim 1/k_n$
and $\frac{k_n^3}{n^3} = o(\frac{1}{n})$ by the assumption \eqref{eq:cond-kn},
 we have
\[
\begin{aligned}
\log  \sin (\pi \psi (w_{k_n}) )
& =
\log  \sin(\pi  (- \frac{1}{n w_{k_n}} + o(\frac{1}{n})) )
 =  \log  (- \frac{\pi}{n w_{k_n}} + o(\frac{1}{n} ) + O(\frac{k_n^3}{n^3}))\\
& =  \log  (- \frac{\pi}{n w_{k_n}} + o(\frac{1}{n} ) )
=  \log  (- \frac{\pi}{n w_{k_n}} ) + \log (1 - w_{k_n} o(1) ).
\\
&=\log  (- \frac{\pi}{n w_{k_n}} ) + o(1)
 = - \log (-w_{k_n}) + \log (\pi/n) + o(1).
\end{aligned}
\]
This concludes the proof of \eqref{eq:formula-entering}.

\medskip

 Let us now show that
$W_{k_n} \in R_n$
for every $n$ sufficiently large.
By Lemma
\ref{l:phi-iota-kn} 
we have 
$w_{k_n}=-(1+o(1))/k_n$, hence $|\zeta_\pm|/|w_{k_n}|=O(k_n/n)=o(1)$.
Using the definition of $\psi$ we get
\[
\psi(w_{k_n})=\frac{1}{2i\pi}\log\frac{w_{k_n}-\zeta_+}{w_{k_n}-\zeta_-}
=\frac{k_n}{n}+o\!\left(\frac{k_n}{n}\right).
\]
Since $\chi(W)=W-\frac{1-a}{n}\log\sin(\pi W)$ satisfies $\chi(W)=W+O(\log(n/k_n)/n)$ on $R_n$,
it follows that $W_{k_n}:=\phi(w_{k_n})=\chi(\psi(w_{k_n}))\in R_n$ for $n$ large.

\medskip

Finally,
 from $w_{k_n+1}=f(w_{k_n})+\varepsilon_{k_n,n}^2$ and $f(w)=w+w^2+O(w^3)$ we have
$\frac{1}{w_{k_n+1}}=\frac{1}{w_{k_n}}-1+O(w_{k_n})$, hence
\[
\psi(w_{k_n+1})-\psi(w_{k_n})
=\frac{1}{n}+O\!\left(\frac{1}{n k_n}\right)=\frac{1}{n}+o\!\left(\frac{1}{n}\right).
\]
Therefore $W_{k_n+1}-W_{k_n}=\chi(\psi(w_{k_n+1}))-\chi(\psi(w_{k_n}))=\frac{1}{n}+o(1/n)$.
Since $W_{k_n}\sim k_n/n$
 lies at distance
at least $k_n/2n$ 
from 
the boundary of $R_n$, we also get $W_{k_n+1}\in R_n$.
This concludes the proof.
\end{proof}

\subsection{Passing through the eggbeater}\label{ss:passing}
As a result of the estimates of the previous section and the definitions of $\phi$
and $R_n$,
the element $w_{k_n}$ satisfies $\phi (w_{k_n})\in R_n$ as soon as $n$ is sufficiently large.
The goal of this section is to show that $\phi(w_k)$
stays in $R_n$ for every $k_n\leq k\leq n-k_n$, and to estimate the error made at every step with respect
to the autonomous system. This is the point where we use the control on the error terms 
in Section \ref{s:coordinates}.
We start with the following preliminary estimate.

\begin{lem}\label{l:prelim}
	For every $ k_n \leq j \leq n - k_n$, we have $W_j \in R_n$ and 
	$$W_j = W_{k_n} + \frac{j - k_n}{n}+ O\Big(\frac{j}{n^2}
\Big).
$$
\end{lem}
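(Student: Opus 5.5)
We argue by a bootstrap (continuity) induction on $j$, starting from Proposition~\ref{p:entering}, which gives $W_{k_n},W_{k_n+1}\in R_n$ for $n$ large. Assume that for some $j$ with $k_n\le j<n-k_n$ we already know $W_i\in R_n$ for all $k_n\le i\le j$. Then Proposition~\ref{prop:estimate-central} applies to every step $i$ with $k_n\le i<j$, giving
\[
W_{i+1}-W_i=\frac1n+H_{i,n}(\widehat w_{i+1})+O\Big(\frac{w_i^2}{n},\frac{1}{n^{2+\alpha}},\frac{\log(n/k_n)}{n k_n^2}\Big).
\]
We bound each term uniformly. Since $\widehat w_{i+1}=-\frac{\pi}{n}\cot(\pi W_{i+1})$ with $W_{i+1}\in R_n$, Lemma~\ref{l:rectangle} (in the form $|\cot(\pi W)\pm i|\ge c$) gives $|\widehat w_{i+1}-\zeta^\pm|\gtrsim 1/n$. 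Hence $H_{i,n}(\widehat w_{i+1})=O(\frac{1}{n^3}\cdot n)=O(1/n^2)$, using the boundedness of $(\delta_i)$. The term $\frac{\log(n/k_n)}{nk_n^2}$ is $o(1/n^2)$ by \eqref{eq:cond-kn}, and $n^{-2-\alpha}=o(n^{-2})$.

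The main obstacle is the term $w_i^2/n$. The pointwise bound $w_i=O(1/k_n)$, from the proof of Proposition~\ref{prop_Ak}, only gives $w_i^2/n=O(1/(nk_n^2))$, which is $o(1/n^2)$ again by \eqref{eq:cond-kn} since $n/k_n^2\to 0$. So every step error is $O(1/n^2)$ with a constant independent of $i,j,n$ and locally uniform in $w_0$. Summing from $k_n$ to $j$ gives
\[
W_{j+1}=W_{k_n}+\frac{j+1-k_n}{n}+O\Big(\frac{j}{n^2}\Big),
\]
which is the claimed estimate at $j+1$. If the crude bound on $w_i^2$ were insufficient, we would instead sum $|\cot(\pi W_i)|^2/n^3\lesssim n^{-3}\sum_i \mathrm{dist}(W_i,\mathbb Z)^{-2}$ along the nearly arithmetic progression $W_i$, which yields $O(1/(nk_n))$ in total. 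With the crude bound this refinement is not needed.

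It remains to close the induction by checking $W_{j+1}\in R_n$. For the real part, Proposition~\ref{p:entering} and its proof give $\re W_{k_n}=\frac{k_n}{n}(1+o(1))$, so
\[
\re W_{j+1}=\frac{j+1}{n}+o\Big(\frac{k_n}{n}\Big)+O\Big(\frac1n\Big).
\]
For $j+1\le n-k_n$ this lies in $\big(\frac{k_n}{10n},1-\frac{k_n}{10n}\big)$ when $n$ is large. For the imaginary part, $\im W_{k_n}=o(k_n/n)$ by the same computation, and the accumulated error is $O(1/n)$, so $|\im W_{j+1}|<1$. This completes the induction for all $k_n\le j\le n-k_n$. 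All implicit constants are uniform for $w_0$ in a compact subset of $\mathcal B_f$, which proves the lemma.
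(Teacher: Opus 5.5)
There is a genuine gap in how you close the induction. At stage $j$ your hypothesis is $W_i\in R_n$ for $k_n\le i\le j$. Proposition~\ref{prop:estimate-central} needs both $W_i$ and $W_{i+1}$ in $R_n$, so it is available only for the steps $k_n\le i\le j-1$, and summing those increments controls $W_j$, not $W_{j+1}$. When you write ``summing from $k_n$ to $j$ gives $W_{j+1}=\dots$'' you are using the proposition at step $i=j$, which presupposes $W_{j+1}\in R_n$. You then use the resulting formula to verify $W_{j+1}\in R_n$, which is circular. In discrete time there is no continuity that closes a bootstrap for free. The hypothesis $W_{j+1}\in R_n$ really is used: it supplies $|w_{j+1}-\zeta^\pm|\ge c/n$ in Step 2 of Proposition~\ref{prop_Ak} and in Claim 1 of Proposition~\ref{prop:estimate-central}. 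So it must be obtained independently.

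What is missing is a rough one-step bound that does not assume $W_{j+1}\in R_n$; this is exactly the paper's ``one-step invariance'' step.
\begin{itemize}
\item From $W_j\in R_n$ and Lemma~\ref{l:psi-phi} you have $|w_j|\lesssim 1/k_n$ and $|w_j-\zeta^\pm|\ge c/n$.
\item Then $w_{j+1}-w_j=w_j^2+O(w_j^3)+O(n^{-2})$.
\item Since $\psi'(w)=\frac{1}{n(w-\zeta^+)(w-\zeta^-)}$ satisfies $|\psi'|\lesssim \frac{1}{n\max(|w|,1/n)^2}$ near $w_j$, you get $\psi(w_{j+1})-\psi(w_j)=O(1/n)$.
\item Since $\chi'(U)=1+O(1/k_n)$ on the relevant region, also $W_{j+1}-W_j=O(1/n)$.
\item The formula for $W_j$ that you do legitimately have puts $W_j$ at distance $\gtrsim k_n/n$ from $\partial R_n$ when $j<n-k_n$. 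Since $1/n=o(k_n/n)$, this gives $W_{j+1}\in R_n$.
\end{itemize}
Only after this may you apply Proposition~\ref{prop:estimate-central} at step $j$ to get the sharp increment $\frac1n+O(n^{-2})$. With this step inserted, the rest of your argument matches the paper and is fine: the bound $H_{i,n}=O(n^{-2})$, the estimate $w_i^2/n=O(1/(nk_n^2))=o(n^{-2})$, and the summation of per-step errors with uniform constants.
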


Observe that,
by 
the assumptions 
\eqref{eq:cond-kn} on $k_n$, we have
$\frac{\log (n/k_n)}{k_n^2}=o(1/n)$,
 hence errors of this magnitude 
can be  considered negligible in the proof below.

\begin{proof}
We argue by induction on $j$ in the range $k_n\le j\le n-k_n$.

\smallskip
\noindent\textbf{Base step.}
By Proposition~\ref{p:entering} we have $W_{k_n},W_{k_n+1}\in R_n$.
The estimate is trivial for $j=k_n$, and the proof of Proposition~\ref{p:entering} yields
\[
W_{k_n+1}=W_{k_n}+\frac1n+O\!\left(\frac1{n^2}\right),
\]
so it holds for $j=k_n+1$ as well.

\smallskip
\noindent\textbf{Induction step.}
Fix $J$ with $k_n\le J\le n-k_n-2$ and assume that
\begin{itemize}
\item $W_j\in R_n$ for all $k_n\le j\le J+1$, and
\item for all $k_n\le j\le J+1$,
\[
W_j = W_{k_n}+\frac{j-k_n}{n}+O\!\left(\frac{j}{n^2}
\right).
\]
\end{itemize}
We prove that 
(for every $n$ sufficiently large)
we have $W_{J+2}\in R_n$ and
 the estimate holds for $j=J+2$.
Observe that
we  first need to prove $W_{J+2}\in R_n$ by a rough one-step estimate, since
 Proposition~\ref{prop:estimate-central} is stated under the a priori assumption
 $W_{J+2}\in R_n$.
 
\smallskip
\noindent\emph{Step 1 (one--step invariance): $W_{J+2}\in R_n$.}
From the induction hypothesis we have
\[
\re (W_{J+1})=\re (W_{k_n})
+\frac{J+1-k_n}{n}+O\!\left(\frac{J+1}{n^2}
\right),
\]
hence for $n$ large we have
\[
{\rm dist}(W_{J+1},\partial R_n)\ge \frac{k_n}{20n}.
\]

Write $w_{J+2}=f(w_{J+1})+\varepsilon_{J+1,n}^2$.
Then
\[
w_{J+2}-w_{J+1}=w_{J+1}^2+O(w_{J+1}^3)+O(1/n^2).
\]
Since $W_{J+1}\in R_n$, Lemma~\ref{l:psi-phi}(2)
 gives $|w_{J+1}-\zeta_\pm|\ge c/n$
and Lemma~\ref{l:psi-phi}(1)
gives $|w_{J+1}|\lesssim 1/k_n$ on $R_n$
(we use here that  $|\cot(\pi W_{J+1})|=O(n/k_n)$). Therefore
\[
|w_{J+2}-w_{J+1}|\;\lesssim\; \frac1{k_n^2}+\frac1{n^2}.
\]

Denote as above $U_{J+1}:=\psi(w_{J+1})$.
 Differentiating the definition of $\psi$ yields
\[
\psi'(w)=\frac{1}{n(w-\zeta_+)(w-\zeta_-)}.
\]
Using $|w_{J+1}-\zeta_\pm|\ge c/n$, we get $|\psi'(w_{J+1})|\lesssim
\frac{1}{n\max(|w_{J+1}|,1/n)^2}$, hence
\[
|\psi'(w_{J+1})|\cdot |w_{J+2}-w_{J+1}|\;\lesssim\;\frac1n.
\]
A Taylor expansion gives $A_{J+1}=\psi(w_{J+2})-\psi(w_{J+1})=O(1/n)$.

Recall that we denote
 $W=\chi(U)$.
On $R_n$ we have $|\cot(\pi U)|\lesssim n/k_n$, hence
\[
\chi'(U)=1-\frac{1-a}{n}\pi\cot(\pi U)=1+O(1/k_n),
\]
so $\chi$ is $1+o(1)$--Lipschitz on $R_n$. Therefore
\[
|W_{J+2}-W_{J+1}|
=|\chi(U_{J+2})-\chi(U_{J+1})|
\le (1+o(1))|A_{J+1}|
=O(1/n).
\]
Since ${\rm dist}(W_{J+1},\partial R_n)\ge k_n/(20n)$,
 we conclude that $W_{J+2}\in R_n$
for every $n$ 
sufficiently 
large, as desired.

\smallskip
\noindent\emph{Step 2: estimate for $W_{J+2}$.}
Since
 $W_{J+1},W_{J+2}\in R_n$, we may apply
Proposition~\ref{prop:estimate-central} at time $J+1$ to get
(as observed above, the term
 $\frac{\log (n/k_n)}{k_n^2}$
is negligible with respect to $1/n$)
\[
W_{J+2}-W_{J+1}
=\frac1n+O\!\left(\frac1{n^2}\right),
\]
since $H_{J+1,n}(\cdot)=O(1/n^2)$ on $R_n$ (by Lemma \ref{l:psi-phi}(2).
Combining this with the induction hypothesis for $W_{J+1}$ yields
\[
W_{J+2}
= W_{k_n}+\frac{J+2-k_n}{n}+O\!\left(\frac{J+2}{n^2}\right).
\]
This concludes the induction step and the proof.
\end{proof}

\begin{prop}\label{p:middle}
	For every 
	$ k_n \leq j \leq n - k_n$
	 we have $W_j \in R_n$ 
	and 
	$$W_j =
	 \left( W_{k_n} + \frac{j-k_n}{n}+ \sum_{k=k_n}^{j-1}
	 H_{k,n}\Big(
	 -\frac{\pi}{n} \cot
	\Big(\frac{{(k+1)}\pi}{n}\Big)\Big) \right)+ 
	{o\Big(\frac{1}{n}\Big)}.$$
\end{prop}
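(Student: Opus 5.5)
The membership $W_j\in R_n$ is already given by Lemma \ref{l:prelim}, so only the refined estimate needs proof. Since $W_k,W_{k+1}\in R_n$ for all $k_n\le k\le n-k_n-1$, we may apply Proposition \ref{prop:estimate-central} at every such $k$ and telescope:
\[
W_j-W_{k_n}=\sum_{k=k_n}^{j-1}\tilde A_k=\frac{j-k_n}{n}+\sum_{k=k_n}^{j-1}H_{k,n}\Big(-\frac{\pi}{n}\cot(\pi W_{k+1})\Big)+\sum_{k=k_n}^{j-1}O\Big(\frac{w_k^2}{n},\frac1{n^{2+\alpha}},\frac{\log(n/k_n)}{nk_n^2}\Big).
\]
The plan is to show that the error sum is $o(1/n)$, and that replacing $W_{k+1}$ by $(k+1)/n$ inside $H_{k,n}$ costs only $o(1/n)$ in total.

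For the error sum, the terms $1/n^{2+\alpha}$ and $\log(n/k_n)/(nk_n^2)$ add up to $O(n^{-1-\alpha})+O(\log(n/k_n)/k_n^2)$, which is $o(1/n)$ by \eqref{eq:cond-kn}. For $\sum w_k^2/n$ we use Lemma \ref{l:psi-phi}(1), which gives $w_k=-\frac{\pi}{n}\cot(\pi W_k)+o(1/n)$. By Lemma \ref{l:prelim}, $\re W_k\approx k/n$, so $|w_k|\lesssim \frac{1}{n\,{\rm dist}(k/n,\mathbb Z)}+\frac1n$. Hence
\[
\sum_k \frac{w_k^2}{n}\lesssim \frac1n\sum_{m\ge k_n}\frac{1}{m^2}+\frac{1}{n^2}\lesssim\frac{1}{nk_n},
\]
which is $o(1/n)$.

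The replacement step is the main obstacle. By Proposition \ref{p:entering}, $W_{k_n}=k_n/n+o(k_n/n)$, and this error is too crude. We refine it using \eqref{eq:formula-entering} together with Lemma \ref{l:phi-iota-kn}: $nW_{k_n}=k_n+\phi^\iota(w_0)-(1-a)\log(\pi/n)+o(1)$, so $W_{k_n}=k_n/n+O(\log n/n)$. Combined with Lemma \ref{l:prelim}, this gives $W_{k+1}=(k+1)/n+e_k$ with $e_k=O(\log n/n)+O(k/n^2)=O(\log n/n)$. Set $g(x)=H_{k,n}(-\frac{\pi}{n}\cot(\pi x))$. A direct computation from \eqref{eq:Hkn} with $\zeta^\pm=\pm i\pi/n+O(n^{-2})$ shows that
\[
\frac{1}{w-\zeta^+}-\frac{1}{w-\zeta^-}=\frac{\zeta^+-\zeta^-}{(w-\zeta^+)(w-\zeta^-)},
\]
and at $w=-\frac{\pi}{n}\cot(\pi x)$ this becomes $\frac{2i\pi/n}{(\pi^2/n^2)(\cot^2(\pi x)+1)}(1+O(1/n))=\frac{2in}{\pi}\sin^2(\pi x)(1+O(1/n))$. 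Therefore $g(x)=\frac{\delta_k}{\pi^2 n^2}\sin^2(\pi x)+O(n^{-3})$; this is precisely where $G$ will come from. The function $g$ is smooth in $x$ with derivative $O(1/n^2)$, uniformly on the strip $|\im x|<1$ near the real interval. Consequently each replacement costs $O(|e_k|/n^2)=O(\log n/n^3)$, and summing over at most $n$ indices gives $O(\log n/n^2)=o(1/n)$. The same bound absorbs the $O(n^{-3})$ discrepancy in the approximation of $\zeta^\pm$.

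Putting these together yields the stated formula. The delicate point is to avoid using the crude bound $W_{k_n}-k_n/n=o(k_n/n)$, which would give a total error of size $o(k_n/n^2)\cdot n$. That bound only makes the error $o(k_n/n)$, which is not $o(1/n)$. This is why the logarithmic-precision location of $W_{k_n}$ coming from Proposition \ref{p:entering} is needed.
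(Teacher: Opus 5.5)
Your proposal is correct, and its skeleton is the paper's. The paper's induction on $j$ is your telescoping of Proposition \ref{prop:estimate-central}, with each step's error bounded by $o(1/n^2)$ via $|w_k|\lesssim 1/k_n$ on $R_n$; you sum more finely and get $O(1/(nk_n))$. The paper then also replaces $\cot(\pi W_{k+1})$ by $\cot((k+1)\pi/n)$.

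The genuine difference is how that replacement is controlled.

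\emph{The paper's route.} It bounds the change in the argument of $H_{k,n}$, using the uniform estimate $|\cot'|=O(n^2/k_n^2)$ on $R_n$ together with $W_{k+1}-(k+1)/n=O(1/n)$. This offset is really $O(\log n/n)$ when $a\neq 1$, as you observe; that only turns $O(n/k_n^2)$ into $O(n\log n/k_n^2)$, which is still $o(1)$ under \eqref{eq:cond-kn}. So this step genuinely uses the lower bound on $k_n$.

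\emph{Your route.} You differentiate the composition $g(x)=H_{k,n}(-\frac{\pi}{n}\cot(\pi x))=\frac{\delta_k}{\pi^2 n^2}\sin^2(\pi x)\,(1+O(1/n))$, in which the poles of $\cot$ cancel. Hence $g$ is $O(n^{-2})$-Lipschitz on the whole strip, with no loss near the ends of $R_n$. This is cleaner and makes this step independent of the lower bound on $k_n$. It also already produces the weight $G$, which the paper only extracts in Section \ref{ss:end}.

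One correction to your closing paragraph. With your Lipschitz bound, the crude location $W_{k_n}=k_n/n+o(k_n/n)$ would already suffice. It costs $O(n^{-2})\cdot o(k_n/n)$ per term, hence $o(k_n/n^2)=o(1/n)$ in total. The refined $O(\log n/n)$ location is harmless but not needed for your argument. It is the paper's route, through the uniform bound on $\cot'$, that is sensitive to the size of this offset.
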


A more precise error term
in the expression above would be 
	$O\Big(\frac{1}{n^{1+\alpha}}, \frac{\log (n/k_n)}{k_n^2}\Big)$. This is
	indeed $o(1/n)$ because of the assumptions \eqref{eq:cond-kn} on $k_n$.
An error $o(1/n)$ will be enough for our final estimate in Section \ref{ss:end}.

\begin{proof}
We refine the estimate of Lemma \ref{l:prelim}.
 We only need to prove the formula. We again work by induction and prove that
\[W_j = \left( W_{k_n} + \frac{j-k_n}{n}+ 
\sum_{k=k_n}^{j-1} H_{k,n}(-\frac{\pi}{n} \cot(\frac{{(k+1)}\pi}{n})) \right)+
{o(\frac{j}{n^2})}\]
for every $j$ as in the statement.
For $j= k_n$ the estimate is trivial
(the sum is empty in this case).
 For $k_n < J < n - k_n$,
by 
 Lemma \ref{l:prelim}  we can apply 
 Proposition \ref{prop:estimate-central}, which gives
\[
\begin{aligned}
W_{J+1}   = W_J + \tilde A_J
= &
\left( W_{k_n} + \frac{J-k_n}{n}+ \sum_{k=k_n}^{J-1}
 H_{k,n}(-\frac{\pi}{n} \cot(\frac{{(k+1)} \pi}{n})) \right)
 {+ o(\frac{J}{n^2})}
\\
& +
\left(
\frac{1}{n}
+
H_{J,n}(-\frac{\pi}{n}\cot(\pi W_{J+1})) 
{+o(\frac{1}{n^2})}
\right),
\end{aligned}
\]
where we  used the 
assumption
\eqref{eq:cond-kn} on
$k_n$
to bound with $o(1/n^2)$ the error term in Proposition \ref{prop:estimate-central}
(here, we may get a more precise error term
$O\Big(\frac{1}{n^{2+\alpha}}, \frac{\log (n/k_n)}{n\, k_n^2}\Big)$
leading to the more precise error term mentioned above).
It follows that
\[
\begin{aligned}
W_{J+1}= 
W_{k_n} 
& + \frac{J+1-k_n}{n}\\
&+ 
\left( \sum_{k=k_n}^{J-1} 
H_{k,n}(-\frac{\pi}{n} \cot(\frac{{ (k+1)}\pi}{n})) 
+
H_{J,n}(-\frac{\pi}{n}\cot(\pi W_{J+1}))
\right)
+ o\Big(\frac{J}{n^2}\Big).
\end{aligned}
\]
To conclude, we
need to prove that
\[H_{J,n}(-\frac{\pi}{n}\cot(\pi W_{J+1}))-
H_{J,n}(-\frac{\pi}{n}\cot(\frac{{(J+1)}\pi}{n}))= o(\frac{1}{n^2})\]
(observe that each  term $H_{k,n}$ is $O(1/n^2)$).
By the definition of $H_{k,n}$, it is enough to prove that
\begin{equation}\label{eq:cot-cot}
\cot(\pi W_{J+1})  - \cot(\frac{{ (J+1)}\pi}{n}) = o(1)
\end{equation}
By Lemma
\ref{l:prelim},
 we have
$W_{J+1}=  \frac{J+1}{n} + O(\frac{1}{n})$. Hence, we have
\[
|\cot(\pi W_{J+1})  - \cot(\frac{{(J+1)}\pi}{n}) |
\lesssim  \cot' (\frac{(J+1)\pi}{n}) \cdot O(\frac{1}{n})
\]
Since $\cot (\cdot)$ has
 a simple pole at $0$, its derivative has a double pole at the same point. Since $J\geq k_n$, we deduce that
  $| \cot' (\frac{(J+1)\pi}{n})| = O(\frac{n^2}{k_n^2})$. Hence,
 \[ |\cot(\pi W_{J+1})  - \cot(\frac{{(J+1)}\pi}{n}) |
= O(\frac{n}{k_n^2}) = o(1),
 \]
 where in the last step we used the
 assumption \eqref{eq:cond-kn} on $k_n$. 
\end{proof}

We conclude this part with the following estimate for the point $w_{n-k_n}$, which we will need
to initialize the next part (we will actually need only the bound
$w_{n-k_n}=O(1/k_n)$, but the proof for the precise expression is essentially the same).

\begin{lem}\label{l:size-exit}
We have
\[
w_{n-k_n}=\frac{1}{k_n}+O\!\left(\frac{1}{n}\right).
\]
In particular, $w_{n-k_n}=O(1/k_n)$.
\end{lem}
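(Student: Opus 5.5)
The plan is to read off $w_{n-k_n}$ from its approximate Fatou coordinate $W_{n-k_n}=\phi(w_{n-k_n})$. Proposition~\ref{p:middle} gives this coordinate, and Lemma~\ref{l:psi-phi}(1) inverts $\phi$. There are three steps: locate $W_{k_n}$ precisely, transport it to time $n-k_n$, and invert.

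\emph{Step 1: position of $W_{k_n}$.} I would combine Proposition~\ref{p:entering} with Lemma~\ref{l:phi-iota-kn}:
\[
nW_{k_n}=\phi^\iota(w_{k_n})-(1-a)\log(\pi/n)+o(1)=\phi^\iota(w_0)+k_n-(1-a)\log(\pi/n)+o(1).
\]
Since $w_0$ ranges in a compact set, this gives $W_{k_n}=\frac{k_n}{n}+O\big(\frac{\log n}{n}\big)$. Proposition~\ref{p:entering} also gives $W_{k_n}\in R_n$.

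\emph{Step 2: transport to time $n-k_n$.} Apply Proposition~\ref{p:middle} with $j=n-k_n$; it guarantees $W_{n-k_n}\in R_n$. The sum of the $H_{k,n}$ terms has at most $n$ summands. By the definition \eqref{eq:Hkn}, the boundedness of $\delta_k$, and the bound $|\widehat w-\zeta^\pm|\gtrsim 1/n$ from Lemmas~\ref{l:rectangle} and~\ref{l:psi-phi}(2), each summand is $O(1/n^2)$. So the sum is $O(1/n)$, and
\[
W_{n-k_n}=1-\frac{k_n}{n}+O\Big(\frac{\log n}{n}\Big).
\]

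\emph{Step 3: inversion.} By Lemma~\ref{l:psi-phi}(1),
\[
w_{n-k_n}=-\frac{\pi}{n}\cot(\pi W_{n-k_n})+O\Big(\frac1{n^2},\frac{\log(n/k_n)}{k_n^2}\Big).
\]
Write $W_{n-k_n}=1-x$ with $x=\frac{k_n}{n}+O(\frac{\log n}{n})$, so that $x\asymp k_n/n$ and $-\cot(\pi(1-x))=\cot(\pi x)=\frac{1}{\pi x}+O(x)$. This yields
\[
-\frac{\pi}{n}\cot(\pi W_{n-k_n})=\frac{1}{nx}+O\Big(\frac{k_n}{n^2}\Big)=\frac{1}{k_n+O(\log n)}+O\Big(\frac{k_n}{n^2}\Big)=\frac1{k_n}+O\Big(\frac{\log n}{k_n^2}\Big)+O\Big(\frac{1}{n}\Big).
\]
The same computation applies verbatim if $x$ is complex with imaginary part $O(\log n/n)$.

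The main obstacle is the bookkeeping of the error terms, in particular the $O(\log n)$ shift coming from the $(1-a)\log(\pi/n)$ term. It is absorbed only thanks to \eqref{eq:cond-kn}. With the choice $k_n=\lfloor n^\beta\rfloor$ we have $\log(n/k_n)\asymp\log n$, so \eqref{eq:cond-kn} gives $\frac{\log n}{k_n^2}=o(1/n)$. This handles both the error from the logarithmic shift and the error in Lemma~\ref{l:psi-phi}(1). In addition, $k_n/n^2\le 1/n$. Collecting terms gives $w_{n-k_n}=\frac1{k_n}+O(1/n)$, and hence $w_{n-k_n}=O(1/k_n)$ since $k_n=o(n)$. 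All the estimates are uniform for $w_0$ in a compact subset of $\mathcal B_f$.
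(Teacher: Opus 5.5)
Your proof is correct and follows the same route as the paper: locate $W_{n-k_n}$, invert it through Lemma~\ref{l:psi-phi}(1), and expand $\cot(\pi(1-x))=-\frac{1}{\pi x}+O(x)$.

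Your bookkeeping is more careful than the paper's. The paper asserts $W_{n-k_n}=1-\frac{k_n}{n}+O(\frac1n)$ from Lemma~\ref{l:prelim}, which drops the $O(\frac{\log n}{n})$ shift of $W_{k_n}$ coming from the term $(1-a)\log(\pi/n)$. The extra error $O(\log n/k_n^2)$ that you track is $o(1/n)$ under \eqref{eq:cond-kn} for every admissible $k_n$, not only for $k_n=\lfloor n^\beta\rfloor$: since $k_n=o(n^{2/3})$, we have $\log n\le 3\log(n/k_n)$ for large $n$.
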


\begin{proof}
By Lemma~\ref{l:prelim} we have
\[
W_{n-k_n}=\frac{n-k_n}{n}+O\!\left(\frac{1}{n}\right)=1-\frac{k_n}{n}+O\!\left(\frac1n\right).
\]
Set $\eta_n:=1-W_{n-k_n}$, so that $\eta_n=\frac{k_n}{n}+O(1/n)$ and
$n\eta_n=k_n+O(1)$.
By Lemma \ref{l:psi-phi}(1), we have
\[
w_{n-k_n}=-\frac{\pi}{n}\cot(\pi W_{n-k_n})
+O\!\left(\frac1{n^2}\right)+O\!\left(\frac{\log(n/k_n)}{k_n^2}\right).
\]
Since $\cot(\pi(1-\eta))=-\cot(\pi\eta)=-(\pi\eta)^{-1}+O(\eta)$ as $\eta\to0$,
we obtain
\[
-\frac{\pi}{n}\cot(\pi W_{n-k_n})
=\frac{1}{n\eta_n}+O\!\left(\frac{\eta_n}{n}\right)
=\frac{1}{k_n}+O\!\left(\frac{1}{n}\right).
\]
This gives the assertion
(recall that, by
\eqref{eq:cond-kn}, 
$\frac{\log(n/k_n)}{k_n^2}$ is
negligible with respect to  $\frac{1}{n}$).
\end{proof}

\subsection{Exiting the eggbeater}\label{ss:exiting}
The next two estimates 
mirror those of 
 Proposition \ref{p:entering} and Lemma \ref{l:phi-iota-kn}.

\begin{prop}\label{p:exiting}
	We have
\[
n\bigl(\phi(w_{n-k_n})-1\bigr)
= \phi^o(w_{n-k_n}) 
 -
  (1-a)\log(\pi/n)
 + o(1).
\]
\end{prop}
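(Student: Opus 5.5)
The plan is to mirror the proof of Proposition \ref{p:entering}, working near the right end of the strip, where $\psi$ is close to $1$. Set $v:=w_{n-k_n}$. By Lemma \ref{l:size-exit} we have $v=\frac{1}{k_n}+O(\frac1n)$, so $|\zeta^\pm/v|=O(k_n/n)=o(1)$. Throughout, the assumptions \eqref{eq:cond-kn} on $k_n$ will absorb the error terms.

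\emph{First step: expansion of $\psi$.} I expect
\[
n\bigl(\psi(v)-1\bigr)=-\frac1v+o(1).
\]
The only new point compared to the entering case is the branch of the logarithm. Since $v$ is close to the positive real axis, the ratio $\frac{v-\zeta^+}{v-\zeta^-}$ lies near $1$ but crosses the cut differently. More precisely, $\psi(v)$ is close to $1$ rather than to $0$: the value $\psi(v)\in R_n$ is obtained by continuation along the orbit, and by Lemma \ref{l:prelim} its real part is near $1-k_n/n$. So the principal-log formula must be corrected by the integer $1$. I will check this by writing
\[
\psi(v)=1+\frac{1}{2i\pi}\Bigl(\log\bigl(1-\tfrac{\zeta^+}{v}\bigr)-\log\bigl(1-\tfrac{\zeta^-}{v}\bigr)\Bigr)
\]
for the relevant branch, and then expanding exactly as in Proposition \ref{p:entering}. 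This gives $-\frac{1}{nv}+O(\frac{k_n^2}{n^3})$, and the error $O(k_n^2/n^3)$ becomes $o(1)$ after multiplying by $n$. Getting this branch bookkeeping right, consistently with how $\phi$ was continued along the orbit in Lemma \ref{l:prelim}, is the main obstacle. Once it is done, the rest is a direct transcription of the entering computation.

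\emph{Second step: the correction $\chi$.} Since $\phi=\chi\circ\psi$, we have
\[
n\bigl(\phi(v)-1\bigr)=n\bigl(\psi(v)-1\bigr)-(1-a)\log\sin\bigl(\pi\psi(v)\bigr).
\]
Write $\psi(v)=1-\eta$, where $\eta=\frac{1}{nv}+o(\frac1n)$ is of size $k_n/n$. Then
\[
\sin(\pi\psi(v))=\sin(\pi\eta)=\pi\eta+O(\eta^3)=\frac{\pi}{nv}+o\Bigl(\frac1n\Bigr),
\]
using $k_n^3/n^3=o(1/n)$. Hence
\[
\log\sin(\pi\psi(v))=-\log v+\log(\pi/n)+o(1).
\]
Here $v$ has positive real part, so $\log v$ is the principal branch with no sign flip, matching the expression of $\phi^o$ in \eqref{eq:phi-precise}. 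I will also check that the branch of $\log\sin$ used in $\chi$ agrees with the one continued along the orbit, which is the same issue as in the first step.

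\emph{Conclusion.} Combining the two steps gives
\[
n\bigl(\phi(v)-1\bigr)=-\frac1v+(1-a)\log v-(1-a)\log(\pi/n)+o(1).
\]
By \eqref{eq:phi-precise}, $\phi^o(v)=-\frac1v+(1-a)\log v+o(1)$ as $v\to0$, and $v\to0$ here. This yields the statement.
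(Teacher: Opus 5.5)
Your proposal is correct and follows the paper's proof essentially step for step: expand $n(\psi(v)-1)=-1/v+o(1)$, deduce $\log\sin(\pi\psi(v))=-\log v+\log(\pi/n)+o(1)$ from $\sin(\pi\psi(v))=\pi/(nv)+o(1/n)$, and compare with the asymptotics \eqref{eq:phi-precise} of $\phi^o$. Your explicit branch correction $\psi(v)=1+\frac{1}{2i\pi}\bigl(\log(1-\zeta^+/v)-\log(1-\zeta^-/v)\bigr)$ is a more careful version of what the paper uses implicitly when it places $\psi(w_{n-k_n})$ near $1$, and one small slip remains in your error count: the expansion of $\psi(v)-1$ also contains a cubic term of order $1/(n^3|v|^3)\asymp k_n^3/n^3$, so after multiplying by $n$ the error is $O(k_n^3/n^2)$ rather than $O(k_n^2/n^2)$, which is still $o(1)$ by \eqref{eq:cond-kn}.
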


\begin{proof}
Set $\eta_n:=\psi(w_{n-k_n})-1$. Since $\psi(w_{n-k_n})\in (0,1)+i(-1,1)$ and
$w_{n-k_n}=O(1/k_n)$ (by Lemma \ref{l:size-exit})
 a Taylor expansion of the logarithm in the definition of $\psi$
shows that
\[
n\eta_n = -\frac{1}{w_{n-k_n}}+o(1).
\]
Moreover, we have
\[
\sin(\pi\psi(w_{n-k_n}))=\sin(\pi(1+\eta_n))=-\sin(\pi\eta_n)
= -\pi\eta_n+o(1/n),
\]
hence
\[
\log\sin(\pi\psi(w_{n-k_n})) = \log
\Bigl(\frac{\pi}{n\,w_{n-k_n}}\Bigr)+o(1)
= -\log w_{n-k_n}+\log(\pi/n)+o(1).
\]
Recalling 
that $\phi=\chi\circ\psi$,
 we get
\[
\begin{aligned}
n(\phi(w_{n-k_n})-1)
&=n(\psi(w_{n-k_n})-1)
-
(1-a)\log\sin(\pi\psi(w_{n-k_n}))\\
& = -\frac{1}{w_{n-k_n}}
+
(1-a)\log w_{n-k_n}-(1-a)\log(\pi/n)+o(1).
\end{aligned}
\]
Using the asymptotic 
\eqref{eq:phi-precise}
for $\phi^o$
gives the assertion.
\end{proof}

\begin{lem}\label{l:new-exit}
We have
\[
\phi^o(w_n)=\phi^o(w_{n-k_n})+k_n+o(1).
\]
\end{lem}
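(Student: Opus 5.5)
This mirrors Lemma \ref{l:phi-iota-kn}, now using the repelling Fatou coordinate $\phi^o$ on the last $k_n$ steps $n-k_n\le k< n$. The idea is to track $\phi^o(w_k)$ step by step and show that each step adds $1$ up to an error of size $O(1/(n^2|w_{k+1}|^2))$. Summing these errors over $k_n$ steps then gives $O(k_n^3/n^2)=o(1)$ by \eqref{eq:cond-kn}.

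\emph{A priori control.} By Lemma \ref{l:size-exit}, $w_{n-k_n}=\frac{1}{k_n}+O(1/n)$. This point is small and positive, so it lies in the repelling petal $P^o$, where $\phi^o(w)=-1/w+O(\log|w|)$; in particular $\phi^o(w_{n-k_n})=-k_n+O(\log k_n)$. I will show by induction on $j=0,\dots,k_n$ that $w_{n-k_n+j}$ stays in $P^o$ and satisfies
\[
\phi^o(w_{n-k_n+j})=\phi^o(w_{n-k_n})+j+O\Big(\frac{j\,k_n^2}{n^2}\Big).
\]
Because $\phi^o$ is univalent on the petal and has the asymptotics \eqref{eq:phi-precise}, this gives $w_{n-k_n+j}\approx -1/\phi^o(w_{n-k_n+j})$. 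Hence $|w_{n-k_n+j}|\asymp 1/(k_n-j+O(\log k_n))$ as long as $k_n-j$ is large, and $|w_{n-k_n+j}|$ stays bounded below near the end.

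\emph{One step.} Write $w_{k+1}=f(w_k)+\varepsilon_{k,n}^2$ with $\varepsilon_{k,n}^2=O(1/n^2)$. Then
\[
\phi^o(w_{k+1})=\phi^o(f(w_k))+O\big(|\varepsilon_{k,n}|^2\sup|(\phi^o)'|\big)=\phi^o(w_k)+1+O\Big(\frac{1}{n^2|w_{k+1}|^2}\Big),
\]
using $(\phi^o)'(w)=O(1/w^2)$ on the petal and $\phi^o\circ f=\phi^o+1$. Since the induction gives $|w_{k+1}|\gtrsim 1/k_n$, each error is $O(k_n^2/n^2)$. Summing over at most $k_n$ steps gives $O(k_n^3/n^2)=o(1)$, which yields $\phi^o(w_n)=\phi^o(w_{n-k_n})+k_n+o(1)$.

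\emph{Main obstacle.} The delicate point is the end of the orbit. There $\phi^o(w_k)$ approaches a bounded value of order $O(\log k_n)$, so $w_k$ is no longer small and may leave $P^o$. To handle this, I will run the induction in the $\phi^o$-coordinate: it is valid as long as $\phi^o(w_k)$ stays in a fixed left half-plane contained in $\phi^o(P^o)$, and it gives the lower bound $|w_k|\gtrsim 1/k_n$ there. For the last $O(\log k_n)$ steps, I will instead use that $w_n$ converges to $(\phi^o)^{-1}(\phi^o(w_{n-k_n})+k_n)$, a point in a compact set. This is an approximation of a fixed number of iterates of $f$ (or $O(\log k_n)$ iterates, each perturbed by $O(1/n^2)$), so its error is $o(1)$ by continuity. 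Equivalently, one can interpret $\phi^o(w_n)$ through the extension $\phi^o(w)=\phi^o(f^{-m}(w))+m$ along this orbit. It also has to be checked that the $O(\log k_n)$ term in $\phi^o(w_{n-k_n})$ is matched by $\phi^o(w_{n-k_n})+k_n=O(\log k_n)+$ bounded. Since Proposition \ref{p:exiting} and the middle estimates make this sum bounded, $w_n$ remains in a compact set.
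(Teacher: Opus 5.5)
Your overall strategy is the paper's. You use the one-step relation $\phi^o(w_{j+1})=\phi^o(w_j)+1+O\bigl(1/(n^2|w_{j+1}|^2)\bigr)$, obtained from $\phi^o\circ f=\phi^o+1$ and $(\phi^o)'(w)=O(w^{-2})$, and sum it over the last $k_n$ steps to get $O(k_n^3/n^2)=o(1)$. The paper bounds the sum by $n^{-2}\sum_{m\le k_n}m^2$, you by $k_n\cdot k_n^2/n^2$. You also treat the final iterates separately: there $w_j$ is no longer small, and $\phi^o(w_n)$ has to be read through $(\phi^o)^{-1}$ or through the extension $\phi^o(f^{-m}(\cdot))+m$. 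The paper passes over this point, and your instinct to address it is right.

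Two steps, however, fail as written.

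First, $\phi^o(w_{n-k_n})=-k_n+O(\log k_n)$ does not follow from Lemma \ref{l:size-exit}. An error $O(1/n)$ in $w_{n-k_n}\approx 1/k_n$ becomes an error $O(k_n^2/n)$ in $-1/w_{n-k_n}$. Condition \eqref{eq:cond-kn} forces $k_n^2/n\gg\log(n/k_n)$, which is of order $\log n$. With only that information, the orbit might reach the edge of the petal an unbounded number of steps before time $n$.

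Second, the fallback ``$O(\log k_n)$ iterates, each perturbed by $O(1/n^2)$, so $o(1)$ by continuity'' is not an argument. Continuity controls a fixed number of iterates, not a growing number, and the endpoint then need not lie in a compact set.

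Both issues disappear if you use from the outset the fact you only invoke in your last sentence. Lemma \ref{l:phi-iota-kn}, Proposition \ref{p:entering}, Lemma \ref{l:prelim} and Proposition \ref{p:exiting} give $\phi^o(w_{n-k_n})=\phi^\iota(w_0)-k_n+O(1)$, with the $(1-a)\log(\pi/n)$ terms cancelling. None of these results uses the present lemma, so there is no circularity.

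With this, fix a large $M$. Your induction runs as long as $\re\,\phi^o(w_{n-k_n+j})<-M$, that is, for all but a bounded number $M'$ of final steps, and $|w_{n-k_n+j}|\gtrsim 1/k_n$ throughout that range. The last $M'$ steps follow from $f_{j,n}\to f$ uniformly on compact sets together with $f^{M'}\circ(\phi^o)^{-1}=(\phi^o)^{-1}\circ T_{M'}$. There is no stretch of $O(\log k_n)$ steps. The same boundedness is also what actually underlies the paper's asymptotics \eqref{eq:w-exit}, which it derives somewhat quickly from Lemma \ref{l:size-exit}.
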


\begin{proof}
Arguing exactly
 as in Lemma
\ref{l:phi-iota-kn},
 but starting at time $n-k_n$,
and using the estimate for $w_{n-k_n}$ given by Lemma
\ref{l:size-exit},
  we obtain
\begin{equation}\label{eq:w-exit}
w_{n-k_n+\ell}=\frac{1+o(1)}{k_n-\ell}
\qquad\text{for }0\le \ell\le k_n-1.
\end{equation}
Since $\phi^o\circ f=\phi^o+1$, $(\phi^o)'(w)=O(1/w^2)$ near $0$, and
$|\varepsilon_{j,n}|^2=O(1/n^2)$, for every $n-k_n\le j\le n-1$ we get
\[
\begin{aligned}
\phi^o(w_{j+1})
 =
\phi^o(f(w_j)+\varepsilon_{j,n}^2)
& =
\phi^o(f(w_j))
+
O\!\left(\frac{1}{n^2}\,\big|(\phi^o)'(f(w_j))\big|\right)\\
& =
\phi^o(w_j)+1+O\!\left(\frac{1}{n^2|w_{j+1}|^2}\right).
\end{aligned}\]
Therefore,
\[
\phi^o(w_n)-\phi^o(w_{n-k_n})
=
k_n
+
O\!\left(
\frac{1}{n^2}\sum_{\ell=0}^{k_n-1}\frac{1}{|w_{n-k_n+\ell+1}|^2}
\right).
\]
Using 
\eqref{eq:w-exit},
we obtain
\[
\phi^o(w_n)-\phi^o(w_{n-k_n})
=
k_n
+
O\!\left(
\frac{1}{n^2}\sum_{m=1}^{k_n} m^2
\right)
=
k_n+O\!\left(\frac{k_n^3}{n^2}\right)
=
k_n+o(1),
\]
where in the last step we use the assumption 
\eqref{eq:cond-kn} on $k_n$.
\end{proof}

\subsection{End of the proof of Theorem \ref{th:main}}\label{ss:end}

We can now conclude the proof of Theorem \ref{th:main}. 
Applying Proposition
\ref{p:middle}
 with $j=n-k_n$
and multiplying by $n$,
 we obtain
\[
nW_{n-k_n}
=
nW_{k_n}
+n-2k_n
+
n\sum_{m=k_n}^{\,n-k_n-1}
H_{m,n}\!\left(-\frac{\pi}{n}\cot\!\left(\frac{(m+1)\pi}{n}\right)\right)
+o(1).
\]
By Proposition
\ref{p:entering} 
and Lemma
\ref{l:phi-iota-kn},
we have
\[
nW_{k_n}
=
\phi^\iota(w_{k_n})-(1-a)\log(\pi/n)+o(1)
=
\phi^\iota(w_0)+k_n-(1-a)\log(\pi/n)+o(1).
\]
On the outgoing side, Proposition
\ref{p:exiting} gives
\[
\phi^o(w_{n-k_n})
=
nW_{n-k_n}-n+(1-a)\log(\pi/n)+o(1).
\]
Combining the previous identities, we get
\[
\phi^o(w_{n-k_n})
=
\phi^\iota(w_0)-k_n
+
n\sum_{m=k_n}^{\,n-k_n-1}
H_{m,n}\!\left(-\frac{\pi}{n}\cot\!\left(\frac{(m+1)\pi}{n}\right)\right)
+o(1).
\]
Finally, 
 applying Lemma \ref{l:new-exit} we obtain
\[
\phi^o(w_n)
=
\phi^\iota(w_0)
+
n\sum_{m=k_n}^{\,n-k_n-1}
H_{m,n}\!\left(-\frac{\pi}{n}\cot\!\left(\frac{(m+1)\pi}{n}\right)\right)
+o(1).
\]
We can
 also extend the sum above
  to all indices $0\le m\le n-1$, since the omitted boundary terms contribute $o(1)$ after multiplication by $n$.
  Indeed, for the boundary indices one has
\[
\begin{cases}
G\!\left(\frac{m+1}{n}\right)=O\!\left(\frac{(m+1)^2}{n^2}\right)
\quad\text{for }m<k_n,\\
G\!\left(\frac{m+1}{n}\right)=O\!\left(\frac{(n-m)^2}{n^2}\right)
\quad\text{for }n-k_n\le m\le n-1.
\end{cases}\]
Hence the omitted contribution is
\[
O\!\left(\frac1n\sum_{m<k_n}\frac{(m+1)^2}{n^2}\right)
+
O\!\left(\frac1n\sum_{n-k_n\le m\le n-1}\frac{(n-m)^2}{n^2}\right)
=
O\!\left(\frac{k_n^3}{n^3}\right)
=o(1),
\]  
  where in the last step we used the assumption \eqref{eq:cond-kn} on $k_n$.
  As a result, 
  we obtain
\[
\phi^o(w_n)
=
\phi^\iota(w_0)
+
n\sum_{m=0}^{n-1}
H_{m,n}\!\left(-\frac{\pi}{n}\cot\!\left(\frac{(m+1)\pi}{n}\right)\right)
+o(1).
\]

To conclude, we 
need to compute 
 the sum on the right hand side.
It follows from the definition \eqref{eq:Hkn} of $H_{k,n}$
(see Lemma \ref{l:zeta} for the definition of $\delta_k$)
that
\begin{align*}
	H_{k,n}(w)&=\frac{2\pi^2 \sigma_k}{2i\pi n^3}  \frac{\zeta^+-\zeta^-}{w^2-(\zeta^+ + \zeta^-)w+\zeta^+ \zeta^- } 
	=\frac{2\pi^2 \sigma_k}{2i\pi n^3}  \frac{\frac{2i\pi}{n}}{w^2- \frac{a\pi^2}{n^2} w + \frac{\pi^2}{n^2} + O(\frac{1}{n^3})} \\
	&=\frac{2\pi^2 \sigma_k}{n^4}    \frac{1}{w^2-\frac{a\pi^2}{n^2}w+ \frac{\pi^2}{n^2} + O(\frac{1}{n^3}) }.
\end{align*}
Therefore, we have
\begin{align*}
	H_{k,n}
	\Big( - \frac{\pi}{n}
	& \cot
	\Big(\frac{(k+1)\pi}{n}\Big)\Big)\\
	&=
		\frac{2\pi^2 \sigma_k}{n^4} 
	\left( \frac{\pi^2}{n^2} \cot^2\Big(\frac{(k+1)\pi}{n}\Big)
	 + \frac{a\pi^3}{n^3} \cot\Big(\frac{(k+1)\pi}{n}\Big) 
	  + \frac{\pi^2}{n^2} + O(\frac{1}{n^3})  \right)^{-1}  \\
	&= 	\frac{2 \sigma_k}{n^2} 
	\left( \cot^2\Big(\frac{(k+1)\pi}{n}\Big) + \frac{a\pi}{n} \cot\Big(\frac{(k+1)\pi}{n}\Big)  +1+ O(\frac{1}{n})  \right)^{-1}.
\end{align*}	
It follows that 
\begin{align*}
	n \sum_{k=0}^{n-1}  H_{k,n}\Big(-\frac{\pi}{n}
	& \cot\Big(\frac{(k+1)\pi}{n}\Big)\Big) \\
	&= 
		\frac{2}{n} \sum_{k=0}^{n-1}  \sigma_k
	\left(
	 \cot^2\Big(\frac{(k+1)\pi}{n}\Big) + 
	 \frac{a\pi}{n} 
	 \cot\Big(\frac{(k+1)\pi}{n}\Big) 
	  +1+ O(\frac{1}{n})  \right)^{-1} \\
	&=  \left(	\frac{1 }{n} \sum_{k=0}^{n-1} G(\frac{(k+1)}{n}) \sigma_k \right) + o(1).
\end{align*}
The proof is complete.

\begin{rem}
We observe
that $\int_0^1 G(x) dx=1$, so that if $\sigma_k\equiv\sigma$ independently 
of $k$, 
	we do find again $\frac{ \sigma}{n} \sum_{k=0}^{n-1} G(\frac{k+1}{n})   = \sigma + o(1)$ 
	(as a Riemann sum), from which we recover the 
	usual
	statement of the autonomous parabolic implosion.
\end{rem}

\section{Proof of Corollaries \ref{cor:propA}--\ref{cor:measurableskp}}\label{s:corollaries}

\begin{proof}[Proof of Corollary \ref{cor:propA}]
	Fix
	 $n \in \N$. For $j \geq 0$, it will be convenient to set $\eps_j := \eps_{j,n}= \frac{\pi}{2}\sqrt{p^{n^2+j}(z)}$, 
	where $\sqrt{\cdot}$ denotes the square root with positive real part.
	For $z \in \mathcal B_p$,
	 we have $p^k(z) = \frac{1}{k+O(\ln k)}$ as $k\to \infty$,
	  so that 
	\begin{equation*}
		\eps_j=\frac{\pi}{2n}- \frac{\pi}{2n} \frac{j}{2n^2}+O\left(\frac{\ln n}{n^3}\right).
	\end{equation*}
Denote
$N:=2n$ and $\sigma_j:=-\frac{j}{n}=-\frac{2j}{N}$, so that 
	\begin{equation*}
		\eps_j = \frac{\pi}{N}+ \frac{\pi}{N^2} \sigma_j + O\left(\frac{\ln N}{N^3}\right).
	\end{equation*}
	Let $(z_j, w_j)=F^{j}(p^{n^2}(z), w)$. Then, by Theorem \ref{th:main}
	 we have $w_N = L_{u}(w) + o(1)$, 
	where $u:=\frac{1}{N} \sum_{j=0}^{N-1} \sigma_j G(\frac{j+1}{N})$.
Hence,
	\begin{align*}
		u&=-\frac{1}{N} \sum_{j=0}^{N-1}
		 \frac{2j}{N} G(\frac{j+1}{N}) =-\int_0^1 4 x \sin^2(\pi x) dx + o(1) =- 1 + o(1).
	\end{align*}
	Therefore $w_N=L_{-1}(w)+o(1)$, and so $w_{2n+1}=L_0(w)+o(1)$, as required.
\end{proof}

\begin{proof}[Proof of Corollary \ref{cor:vivas}]
By Theorem
\ref{th:main}, 
it suffices to prove that
\[
u_n:=\frac1n\sum_{k=0}^{n-1}\sigma_{k,n}\,G\!\left(\frac{k+1}{n}\right)\longrightarrow 0,
\qquad 
n\to \infty.
\]
Observe that, since $G(1)=2\sin^2(\pi)=0$, the term corresponding to $k=n-1$ in the sum above vanishes.
Moreover, 
since $G(x)=G(1-x)$ for every $x\in[0,1]$, for every $0\le k\le n-2$ we have
\[
G\!\left(\frac{k+1}{n}\right)
=
G\!\left(1-\frac{k+1}{n}\right)
=
G\!\left(\frac{n-1-k}{n}\right)
=
G\!\left(\frac{(n-2-k)+1}{n}\right).
\]
Therefore, we obtain
\[
u_n
=
\frac{1}{2n}\sum_{k=0}^{n-2}
\bigl(\sigma_{k,n}+\sigma_{n-2-k,n}\bigr)
\,G\!\left(\frac{k+1}{n}\right).
\]
Since $G$ is bounded on $[0,1]$,
using \eqref{eq:symm-sigma}
we obtain
\[
|u_n|
\le
\frac{\|G\|_\infty}{2n}
\sum_{k=0}^{n-2}
\left|\sigma_{k,n}+\sigma_{n-2-k,n}\right|
=
O\!\left(\frac1n\right).
\]
Hence $u_n\to 0$, and the conclusion follows from Theorem
\ref{th:main}.
\end{proof}

\begin{proof}[Proof of Corollary \ref{cor:random}]
By Theorem
\ref{th:main}, 
it suffices to prove that the phases
\[
u_n:=\frac1n\sum_{k=0}^{n-1}\sigma_{k,n}\,G\Big(\frac{k+1}{n}\Big)
\]
converge almost surely to $u$,
where $G(x)=2\sin^2(\pi x)$ is as in Theorem \ref{th:main}.

Set $b_j:=\sigma_{j-1}$ for $j\ge 1$. Then we have
\[
u_n=\frac1n\sum_{j=1}^{n} b_j\,G\!\left(\frac{j}{n}\right)
\quad
\text{ and }
\quad
\frac1n\sum_{j=1}^{n} b_j
=
\frac1n\sum_{j=0}^{n-1}\sigma_j
\longrightarrow u
\qquad\text{almost surely.}
\]
Therefore, Lemma
\ref{lem:weighted-cesaro}
 applied to the sequence $(b_j)$ and the function $g=G$
 gives
\[
u_n \longrightarrow u\int_0^1 G(x)\,dx
\qquad\text{almost surely.}
\]
Since
$\int_0^1 2\sin^2(\pi x)\,dx=1$,
we obtain $u_n\to u$ almost surely. Hence
we have
$w_n^{(n)}\to L_u(w_0)$
almost surely, for every $w_0\in B_f$. This concludes the proof.
\end{proof}

The following lemma is an elementary consequence of the discrete
integration by parts. We give a proof
for the reader's convenience. It will also
 be used in
the proof of Corollary \ref{cor:skewproduct}, see 
Proposition \ref{p:FJ}.
The assumption on the derivative of $g$ is not essential, 
but the proof is easier assuming it.

\begin{lem}\label{lem:weighted-cesaro}
Let $(b_k)_{k\ge1}$ be a bounded sequence such that
\[
\frac1n\sum_{k=1}^{n} b_k \longrightarrow L\in  \mathbb C \qquad (n\to\infty).
\]
Then for every $g\in C^1([0,1])$ one has
\[
\frac1n\sum_{k=1}^{n} b_k\,g
\Big(\frac{k}{n}\Big)
\longrightarrow
L\int_0^1 g(x)\,dx
\qquad (n\to\infty).
\]
\end{lem}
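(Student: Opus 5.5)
The plan is a discrete summation by parts (Abel summation), exploiting that the partial Cesàro averages of $(b_k)$ converge. Set $S_k:=\sum_{j=1}^{k} b_j$ for $k\ge 1$ and $S_0:=0$. By hypothesis $S_k = kL + o(k)$, and since $(b_k)$ is bounded, $|S_k|\le Ck$ for all $k$. Writing $b_k = S_k - S_{k-1}$, we will rewrite
\[
\frac1n\sum_{k=1}^{n} b_k\, g\Big(\frac kn\Big)
=\frac1n S_n\, g(1) - \frac1n\sum_{k=1}^{n-1} S_k\Big(g\Big(\frac{k+1}{n}\Big)-g\Big(\frac kn\Big)\Big).
\]
The first term tends to $L\,g(1)$.

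For the second term, use $g\in C^1$. The mean value theorem gives $g(\frac{k+1}{n})-g(\frac kn) = \frac1n g'(\xi_{k})$ with $\xi_k\in[\frac kn,\frac{k+1}n]$. The sum then becomes
\[
\frac1n\sum_{k=1}^{n-1}\frac{S_k}{n}\, g'(\xi_k).
\]
Next, replace $S_k/n$ by $L\,k/n$. The error is $\frac1n\sum_{k}\frac{|S_k-kL|}{n}\,\|g'\|_\infty$.

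The main (mild) obstacle is controlling this error, since $S_k - kL = o(k)$ is not uniform for small $k$. Fix $\varepsilon>0$ and choose $K$ such that $|S_k-kL|\le \varepsilon k$ for all $k\ge K$. The indices $k<K$ contribute at most $\frac1n\cdot K\cdot\frac{(C+|L|)K}{n}\to0$. The indices $k\ge K$ contribute at most $\varepsilon\,\|g'\|_\infty\frac1n\sum_k \frac kn\le\varepsilon\|g'\|_\infty$. Hence the error is $o(1)$.

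What remains is the Riemann sum $\frac{L}{n}\sum_{k=1}^{n-1}\frac kn\, g'(\xi_k)$. By uniform continuity of $g'$, it converges to $L\int_0^1 x g'(x)\,dx$. Integration by parts gives
\[
L\int_0^1 x g'(x)\,dx = L\Big(g(1)-\int_0^1 g(x)\,dx\Big).
\]
Subtracting this from $L\,g(1)$ yields the limit $L\int_0^1 g(x)\,dx$, which is the claim.
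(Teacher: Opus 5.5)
Your proof is correct and follows essentially the same route as the paper: Abel summation with $S_k=\sum_{j\le k}b_j$, the mean value theorem applied to $g$, replacing $S_k$ by $Lk$, and identifying the limit of the Riemann sum $\frac1n\sum_k \frac kn\, g'(\xi_k)$ with $\int_0^1 x g'(x)\,dx$, followed by an integration by parts. Your explicit $\varepsilon$--$K$ splitting justifies the replacement of $S_k$ by $Lk$, a step the paper only asserts; note that the bound for the indices $k<K$ should also carry the factor $\|g'\|_\infty$.
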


\begin{proof}
Set
$S_0:=0$
 and $S_j:=\sum_{k=1}^{j} b_k$ for $j\ge1$.
The assumption implies $S_j/j\to L$, hence $S_j=Lj+o(j)$.
As
$b_k=S_k-S_{k-1}$, a summation by parts gives
\[
\sum_{k=1}^{n} b_k\,g
\Big(\frac{k}{n}\Big)
=
S_n\,
g(1)-\sum_{k=1}^{n-1} S_k
\cdot \Big(g
\Big(\frac{k+1}{n}\Big)-g
\Big(\frac{k}{n}\Big)\Big),
\]
which gives
\[
\frac1n
\sum_{k=1}^{n} b_k\,g
\Big(\frac{k}{n}\Big)
=
L\,g(1)+o(1)
-\frac1n
\sum_{k=1}^{n-1} S_k\Big(g
\Big(\frac{k+1}{n}\Big)-g
\Big(\frac{k}{n}\Big)\Big),
\]
using that $S_n/n\to L$.
By the mean value theorem, we have
\[
g \Big(\frac{k+1}{n}\Big)-
g
\Big(\frac{k}{n}\Big)=\frac1n\,g'(\xi_{k,n})
\quad\text{for some }\xi_{k,n}\in(k/n,(k+1)/n),
\]
hence
\[
\frac1n\sum_{k=1}^{n-1} S_k\Big(g\Big(\frac{k+1}{n}\Big)-
g \Big(\frac{k}{n}\Big)\Big)
=
\frac{1}{n^2}\sum_{k=1}^{n-1} S_k\,g'(\xi_{k,n}).
\]
Using 
that $S_k=Lk+o(k)$ and
the boundedness of $g'$,
 we get
\[
\frac{1}{n^2}\sum_{k=1}^{n-1} S_k\,g'(\xi_{k,n})
=
\frac{L}{n^2}\sum_{k=1}^{n-1} k\,g'(\xi_{k,n})+o(1).
\]
Moreover,
\[
\frac{1}{n^2}\sum_{k=1}^{n-1} k\,g'(\xi_{k,n})
=
\frac1n\sum_{k=1}^{n-1}\frac{k}{n}\,g'(\xi_{k,n})
\longrightarrow
\int_0^1 x\,g'(x)\,dx,
\]
since $\xi_{k,n}\to k/n$ uniformly in $k$.
Therefore
\begin{equation}\label{eq:end-parts}
\frac1n\sum_{k=1}^{n} b_k\,g
\Big(\frac{k}{n}\Big)
\longrightarrow
L\,g(1)-L\int_0^1 x\,g'(x)\,dx.
\end{equation}
Finally,
an
 integration by parts gives
\[
g(1)-\int_0^1 x\,g'(x)\,dx=\int_0^1 g(x)\,dx.
\]
Hence, the limit in
 \eqref{eq:end-parts}
 is equal
 to $L\int_0^1 g(x)\,dx$. The assertion follows.
\end{proof}

\begin{proof}[Proof of Corollary \ref{cor:measurableskp}]
	 For a $\mu$-generic $z$,
	  we apply Corollary
	   \ref{cor:random} 
	   to the bounded sequence
$\sigma_k:=\sigma(T^k z)$.
Then, by Birkhoff's ergodic theorem, we have
\[
\frac1n\sum_{k=0}^{n-1}\sigma(T^k z)\longrightarrow \int_\Omega \sigma\, d\mu.
\]
Hence,
 the corresponding fiber dynamics converges to $L_u$, with
  $u =\int_\Omega \sigma\, d\mu$
   as desired.
\end{proof}

\section{Proof of Corollary \ref{cor:skewproduct}}
\label{s:cor-new}

Let $F$ be an endomorphism of $\mathbb P^2=\mathbb P^2 (\mathbb C)$ 
of algebraic degree $d\geq 2$
which is fibered over a rational map $p$ 
of degree $d$
on $\mathbb P^1= \mathbb P^1(\mathbb C)$,
 i.e.,
such that there exists a dominant rational map
$\pi\colon \mathbb P^2\dashrightarrow \mathbb P^1$
such that
\[
\pi \circ F = p\circ \pi.
\]
More explicitly,
after choosing homogeneous coordinates such that 
$\pi([z_1, z_2, w]) = [z_1, z_2]$,
 any such endomorphism can be written 
in the form
\[
F([z_1, z_2, w])
=
[p_1(z_1,z_2), p_2 (z_1, z_2), Q(z_1, z_2, w)]
\]
where $p=[p_1, p_2]$ is the expression of 
the rational map $p$ in homogeneous coordinates.
Geometrically, 
the map
$F$
sends each fiber of $\pi$
to another fiber, and the induced dynamics on the base $\mathbb P^1$, which parametrizes the
fibers,
 is given by 
$p$.
Recall from
\cite{jonsson1999dynamics, DT21}
 that the Julia set 
 $J(F)$
  and the equilibrium measure $\mu_F$
of $F$ 
admit
 a decomposition and a disintegration of the form
\[
J(F) = \bigcup_{z\in J(p)} J_z
\text{ and }
\mu_F = \int \mu_z d\mu_p (z)
\]
where $J_z\subset \pi^{-1} (z)$
and $\mu_z$ is a 
probability measure on
 $\pi^{-1} (z)$.

\begin{prop}\label{p:FJ}
Let $F: \ptwo \to \ptwo$ be a
holomorphic
	 endomorphism 
which, in some affine chart,
can be written in the form
	$F(z,w) = (p(z), q(w))$,
	where $p$ is a rational map and $q$ is a polynomial map of the form $q(w)=w+w^2+O(w^3)$.
	Let $F_n$ be a sequence of 
holomorphic endomorphisms of $\mathbb P^2$	
 which, in the same affine chart, have  the form
	$$F_n(z,w) = F(z,w) + \left(0,  \left( \frac{\pi}{n} + \frac{a(z)}{n^2} \right)^2\right).$$
Then,
for every ergodic $p$-invariant probability measure $\nu$
and
for $\nu$-a.e.\,
$z\in J(p)$
we have
\[
\liminf_{n\to\infty} J_z(F_n)\ \supset\ J_{\rm Lav}\!\left(q, \frac{1}{\pi}\int a\,d\nu\right),
\]
\end{prop}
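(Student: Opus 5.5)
The plan is to argue as in the classical Douady--Lavaurs lower semicontinuity proof, fiberwise, with Corollary \ref{cor:measurableskp} (and the weighted Ces\`aro Lemma \ref{lem:weighted-cesaro}) supplying the non-autonomous convergence to Lavaurs maps. I use the characterization
\[
J_{\rm Lav}(q,u)=\overline{\bigcup_{k\ge0}\{w\in\mathcal B_q^{(k)}:\ L_u^{k}(w)\in J(q)\}},
\]
where $\mathcal B_q^{(k)}$ is the set where $L_u^k$ is defined. Write $Q_{z,n}(w)=q(w)+\eps_n(z)^2$ for the fiber map over $z$, and $Q^{(m)}_{z,n}=Q_{p^{m-1}z,n}\circ\dots\circ Q_{z,n}$. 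Note that $\eps_n(z)=\frac{\pi}{n}+\frac{\pi}{n^2}\sigma(z)$ with $\sigma:=a/\pi$, which is bounded on $J(p)$. The only properties of fiber Julia sets I need are their invariance, $(Q^{(m)}_{z,n})^{-1}\big(J_{p^mz}(F_n)\big)=J_z(F_n)$, and the fact that $J_z(F_n)$ is the boundary of the fiberwise filled Julia set.

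\textbf{Step 1 (phases along blocks).} Fix a $\nu$-generic $z$, so that Birkhoff's theorem holds for $\sigma$ along its orbit. For every fixed $j\ge0$, the block averages $\frac1n\sum_{k=jn}^{(j+1)n-1}\sigma(p^kz)$ tend to $\int\sigma\,d\nu=u$; indeed they are differences of Birkhoff averages at times $(j+1)n$ and $jn$. Applying Lemma \ref{lem:weighted-cesaro} to the shifted sequence, with $g=G$, each block phase tends to $u$. Theorem \ref{th:main}, applied block by block, then gives $Q^{(n)}_{p^{jn}z,n}\to L_u$ locally uniformly on $\mathcal B_q$. Composing, and using that the limits are holomorphic and the convergence is locally uniform, I get
\[
Q^{(kn)}_{z,n}\to L_u^k
\]
locally uniformly on $\mathcal B_q^{(k)}$ for each fixed $k$. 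Between blocks the images stay in compact subsets of $\mathcal B_q$: they are close to $L_u^j(w)\in\mathcal B_q$ by hypothesis.

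\textbf{Step 2 ($J(q)$ is approximated uniformly over fibers).} I claim that every $x\in J(q)$ is a limit of points $x_n\in J_{z_n'}(F_n)$, uniformly over the base point $z'_n$. To see this, take repelling periodic points of $q$, which are dense in $J(q)$, and a small disc $D$ around one of them on which a branch $h$ of $q^{-\ell}$ is a strict contraction with $h(D)\Subset D$. Since all fiber maps are $O(1/n^2)$-close to $q$ uniformly in the base point, the corresponding inverse branches of $Q^{(\ell)}_{z',n}$ also map $D$ into itself and contract, for every $z'$. The nested intersection of the compositions along the fiber orbit then gives a point whose forward fiber orbit stays in $D$ and is uniformly expanding. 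Such a point cannot lie in the interior of the filled fiber Julia set, since the derivatives of the iterates there blow up; it is also in the filled set because its orbit is bounded. Hence it lies in $J_{z'}(F_n)$, within $\operatorname{diam} D$ of the repelling point.

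\textbf{Step 3 (Hurwitz).} Let $w$ satisfy $L_u^k(w)=x\in J(q)$, and take a small disc $\Delta\ni w$. Since $L_u^k$ is nonconstant, it is open, so $L_u^k(\Delta)$ contains a disc around $x$. By Step 1 and Hurwitz's theorem (equivalently, the argument principle), for large $n$ the image $Q^{(kn)}_{z,n}(\Delta)$ contains a slightly smaller fixed disc around $x$. By Step 2, applied with $z'_n=p^{kn}z$, that disc contains a point of $J_{p^{kn}z}(F_n)$. By invariance, $\Delta$ then meets $J_z(F_n)$. Since $\Delta$ is arbitrarily small, this shows $w\in\liminf_n J_z(F_n)$, and taking closures gives the inclusion in the statement.

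I expect Step 2 to be the main obstacle. It requires checking carefully that points with uniformly expanding non-autonomous orbits belong to the fiber Julia sets of a fibered endomorphism of $\ptwo$ (using \cite{jonsson1999dynamics}), uniformly in the fiber. The composition in Step 1 also needs some care: each block must start exactly where the previous one ends, and the error terms must be uniform in $j\le k$.
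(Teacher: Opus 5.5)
Your proof is correct. Its skeleton is the paper's: convergence of the fiber dynamics to Lavaurs maps, lower semicontinuity of fiber Julia sets, then the Douady--Lavaurs invariance/Hurwitz argument. You justify the two key ingredients differently and more completely.

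The paper only shows, via Birkhoff, Lemma~\ref{lem:weighted-cesaro} and Theorem~\ref{th:main}, that the first $n$ fiber iterates over $z$ converge to $L_u$. It then refers to ``the one-dimensional case'' and to lower semicontinuity of $J_z$, which it derives from continuity of the fiber measures $\mu_z$ \cite{jonsson1999dynamics, DT21}.

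Your Step~1 supplies what is needed for points of $J_{\rm Lav}(q,u)$ with $m\ge 2$. The later blocks start at base points $p^{jn}z$ that move with $n$, and their phases still tend to $u$ because the block sums are differences of Birkhoff sums. Strictly, Lemma~\ref{lem:weighted-cesaro} is stated for a fixed sequence, not one shifted by $jn$. However, its summation by parts only uses $S^{(n)}_m=mu+o(n)$ uniformly in $0\le m\le n$, and Birkhoff's theorem gives exactly that.

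Your Step~2 replaces the measure-theoretic input by a direct construction: uniformly expanding bounded fiber orbits near repelling cycles of $q$. These give points of $J_{z'}(F_n)$ near any point of $J(q)$, \emph{uniformly in the base point} $z'$. This uniformity is exactly what Step~3 needs, since the target fiber $p^{kn}z$ depends on $n$; the paper's sketch leaves this point implicit.

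Your route is elementary and self-contained, but it uses the product structure of $F$ (every fiber map is $O(n^{-2})$-close to $q$). The paper's route is shorter, given the cited results on fiber Green functions and measures.
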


As in \cite{lavaurs1989systemes},
given $u\in\mathbb C$,
the 
\emph{Julia-Lavaurs set} $J_{\rm Lav} (q,u)$
is defined as
\[
J_{\mathrm{Lav}}(q,u)
:=
\overline{\Bigl\{\,w \;:\; \exists m\ge 0 \text{ such that }
L_u^{\circ m}(w)\text{ is defined and }L_u^{\circ m}(w)\in J(q)\Bigr\}}.
\]
Observe that we have $J(q) \subset J_{\rm Lav}(q,u)$ for every $u\in\mathbb C$ and
$\cup_{u\in \mathbb C}J_{\rm Lav}(q,u)
= J \cup \mathcal B_q$, see for instance \cite{lavaurs1989systemes,Douady94Julia}.

\begin{proof}[Proof of Proposition \ref{p:FJ}]
For $z\in J_p$ 
and $n\ge1$, the fiber dynamics of $F_n$ over $z$ is
described by
 the non-autonomous
iteration
\[
w_{k+1}=q(w_k)+\varepsilon_{k,n}(z)^2,\qquad
\varepsilon_{k,n}(z):=\frac{\pi}{n}+\frac{a(p^k(z))}{n^2}
\qquad 0\le k\le n-1.
\]
Thus,
 it fits the framework of Theorem~\ref{th:main} 
 with
\[
\sigma_{k,n}(z)=\frac{1}{\pi}\,a(p^k(z)).
\]
Fix an ergodic $p$-invariant measure $\nu$ on $J_p$
 and let $z$ be $\nu$-generic.
Then, by Birkhoff's ergodic theorem, we have
\[
\frac{1}{n}\sum_{k=0}^{n-1} a(p^k(z))\ \longrightarrow\ \int a\,d\nu.
\]
Since $\int_0^1 G(t)\,dt=1$,
Lemma~\ref{lem:weighted-cesaro}
gives
\[
\frac1n\sum_{k=0}^{n-1}\sigma_{k,n}(z)\,
G\Big(\frac{k+1}{n}\Big)
=\frac{1}{\pi n}\sum_{k=0}^{n-1} a(p^k(z))\,G
\Big(\frac{k+1}{n}\Big)
\ \longrightarrow\ \frac{1}{\pi}\int a\,d\nu.
\]
Hence, by Theorem~\ref{th:main}, 
the
(non-autonomous) fiber dynamics
of $F_n$
 over such $z$ converges to the Lavaurs map
of phase $u=\frac{1}{\pi}\int a\,d\nu$. In particular,
we have
\[
\liminf_{n\to\infty} J_z(F_n)\ \supset\ J_{\rm Lav}(q,u).
\]
This inclusion is proved as in the one-dimensional case \cite{lavaurs1989systemes}
and is a consequence of the lower semi continuity of the fibered Julia sets,
 which in turn is a consequence
of the continuity 
(with respect to the map)
of the corresponding  measures $\mu_z$
\cite{jonsson1999dynamics, DT21}.
\end{proof}

Corollary
\ref{cor:skewproduct}
will follow combining the above proposition and the
following lemma.

\begin{lem}
\label{lem:rotation-interior}
Let $p\colon \mathbb P^1
\to \mathbb P^1$ be a rational map
of degree $d\geq 2$
and let $a\colon J(p)\to\C$
 be H\"older continuous. 
Assume that,
 for every $v\in \mathbb C\setminus \{0\}$, 
 the function $z\mapsto v\cdot a(z)$
  is not cohomologous to a constant on $J_p$,
  where $\cdot$ denotes the standard inner product on $\R^2 \simeq \C$.
  Then the set
\[ W:=\left\{\int_{J_p} a\,d\nu:\ \nu \ \text{ergodic $p$-invariant and }
{\rm supp}(\nu)= J_p\right\}\]
has nonempty interior in $\C$.
\end{lem}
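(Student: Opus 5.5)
The plan is to use the thermodynamic formalism for the Hölder potentials $t\cdot a$, $t\in\R^2$, on $J_p$, with $t\cdot a := t_1\re(a)+t_2\im(a)$. For every Hölder potential $\varphi$ on $J_p$, the equilibrium state $\nu_\varphi$ is unique, ergodic, and has full support $J_p$; this holds for rational maps of degree $d\ge2$ and Hölder potentials, say when $\sup\varphi-\inf\varphi<\log d$ or more generally for hyperbolic potentials. Set $P(t):=P(t\cdot a)$, the topological pressure, for $t$ in a small neighbourhood $B$ of $0\in\R^2$. For such $t$ the potential is hyperbolic, so the transfer operator has a spectral gap. Hence $P$ is real-analytic on $B$, and
\[
\nabla P(t)=\Bigl(\int \re(a)\,d\nu_t,\ \int \im(a)\,d\nu_t\Bigr), \qquad \nu_t:=\nu_{t\cdot a}.
\]
Identifying $\R^2\simeq\C$, this gives $\nabla P(t)=\int a\,d\nu_t\in W$ for every $t\in B$.

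It therefore suffices to show that $\nabla P\colon B\to\R^2$ is a local diffeomorphism near $0$, i.e.\ that the Hessian $D^2P(0)$ is nondegenerate. The second derivative in direction $v$ is the asymptotic variance:
\[
D^2P(0)[v,v]=\sigma^2_{\nu_0}(v\cdot a)=\lim_{n\to\infty}\frac1n\int\Bigl(S_n(v\cdot a)-n\!\int v\cdot a\,d\nu_0\Bigr)^2 d\nu_0,
\]
where $S_n$ denotes Birkhoff sums. By the standard Livšic-type characterization for rational maps, due to Przytycki–Urbański–Zdunik and Denker–Przytycki–Urbański, $\sigma^2(\psi)=0$ if and only if $\psi$ is cohomologous to a constant, with a continuous (or $L^2$) transfer function. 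The hypothesis excludes this for every $v\neq0$, so $D^2P(0)$ is positive definite.

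By the inverse function theorem, $\nabla P(B')$ then contains an open neighbourhood of $\nabla P(0)=\int a\,d\nu_0$ for a smaller ball $B'$. Here $\nu_0$ is the measure of maximal entropy. Since every point of this image lies in $W$, the interior of $W$ is nonempty.

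The main obstacle is the step that validates the analytic thermodynamic formalism for possibly non-hyperbolic $p$. Specifically, one needs real-analyticity of $P$ near $0$, the derivative formulas, and the variance-zero characterization of coboundaries. I would restrict to small $t$ so that $\sup(t\cdot a)-\inf(t\cdot a)<\log d$ and invoke the spectral gap results for such potentials due to Denker–Przytycki–Urbański and Haydn, or the work of Dinh–Sibony. One must also check that the notion of "cohomologous" in the hypothesis, taken with continuous or Hölder transfer functions, matches the one in the variance criterion; I expect this to require citing the regularity of transfer functions in that setting.
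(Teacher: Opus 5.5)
Your proposal is correct and follows the paper's argument: take the pressure $P(t)$ of $t_1\re(a)+t_2\im(a)$ for small $t$, identify $\nabla P(t)$ with $\int a\,d\nu_t$ for the ergodic, fully supported equilibrium states $\nu_t$, note that $\mathrm{Hess}\,P(0)$ is the asymptotic-variance form and is positive definite by the non-coboundary hypothesis, and conclude with the inverse function theorem. The analytic inputs you flag as the main obstacle are exactly the ones the paper cites from Bianchi--Dinh, Denker--Przytycki--Urba\'nski and Parry--Pollicott without further elaboration: uniqueness, full support and analyticity for small-oscillation potentials when $p$ is not hyperbolic, and the variance-zero criterion with the appropriate notion of cohomology.
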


\begin{proof} 
   For $\tau=(\tau_1,\tau_2)\in\R^2$ consider the real H\"older continuous
   potential \[ \varphi_\tau:=\tau_1\,\re (a)+\tau_2\,\im (a). \]
For every $\tau$ 
sufficiently small,
the potential $\varphi_\tau$ admits
   a unique equilibrium state $\nu_\tau$ for $\varphi_\tau$,
see for instance 
 \cite{BD23, DPU96}. 
 Moreover, $\nu_\tau$ is ergodic and has full support.
    Let $P(\tau):=P(\varphi_\tau)$ be the topological pressure.
    Again for $\tau$ sufficiently small,
  by \cite{BD24}
     the function $\tau\mapsto P(\tau)$
is real-analytic 
     and satisfies
     \[ \nabla P(\tau)=\left(\int \re(a)\,d\nu_\tau,\ \int \im(a)\,d\nu_\tau\right). \]
We observe here that the assumption {\bf (A)} in \cite{BD23,BD24} is not needed in our case, since 
     in dimension $1$ all periodic critical points are outside of the Julia set.
The assumption on $a$ implies that
$\det \mathrm{Hess}\,P(0)\neq 0$. 
To see this,  for
 $v\in\mathbb C\sim \R^2$,
define the function $g_v (z):=v\cdot a (z)$.
Then $g_v$ is H\"older-continuous and we have
\[
v^\top\,\mathrm{Hess}\,P(0)\,v = \sigma^2(g_v)\ge 0,
\quad
\text{ where} \quad
\sigma^2(g_v):= \int 
g_v^2 \, d\nu_0 + 2 \sum_{j=1}^\infty \int g_\nu (g_v \circ p^j )\,  d\nu_0\]
 denotes the asymptotic variance of $g_v$ with respect to $\nu_0$,
 see for instance 
\cite[Section 4]{PP90}
and \cite[Lemma 5.7]{BD24}.
If $\mathrm{Hess}\,P(0)$ were not positive definite, there would exist $v\neq 0$ with
$v^\top\mathrm{Hess}\,P(0)v=0$, hence $\sigma^2(g_v)=0$.
But,
again by \cite{PP90, BD24},
this implies
 that $g_v$ is cohomologous to a constant.
Therefore $\mathrm{Hess}\,P(0)$ is positive definite, 
hence invertible.

 In particular, by the inverse function theorem,
  $\nabla P$ is a local diffeomorphism near $0\in \mathbb \R^2$.
  Therefore,
   its
  image contains an open subset, as desired.
         \end{proof}

We can now prove Corollary
 \ref{cor:skewproduct}.

\begin{proof}[Proof of Corollary \ref{cor:skewproduct}]
We first observe that the assumption on $a$ in Corollary
\ref{cor:skewproduct} implies that of
 Lemma \ref{lem:rotation-interior}.

Indeed, fix $v\neq 0$ and suppose by contradiction that
$v\cdot a$ is cohomologous to a constant, i.e., that
the function
 $z\mapsto v\cdot a(z)=c+h-h\circ p$
 for some continuous function $h\colon J_p \to \mathbb C$.
 This implies that, 
for  $i=1,2,3$ we have
\[
v\cdot A_i
=\frac1{m_i}\sum_{j=0}^{m_i-1} v\cdot a(p^j (z^{(i)}))
=c.
\]
Thus the three points $A_1,A_2,A_3$
lie on the affine real line
\[
\{z\in\C:\ v\cdot z=c\},
\]
contradicting the assumption that they are not collinear.

By the above, we can apply Lemma~\ref{lem:rotation-interior}. 
Hence,
the set of phases
\[
W= \left\{\frac{1}{\pi}\int a\,d\nu:\ \nu \text{ ergodic, }{\rm supp}(\nu)=J_p\right\}
\]
contains a nonempty open set $\Omega\subset\C$. 
The Julia--Lavaurs set $J_{\rm Lav}(q,u)$ depends non-trivially 
on
 $u$
as the dependence is a translation in Fatou coordinates.
Hence,
 $\bigcup_{u\in\Omega}J_{\rm Lav}(q,u)$ contains a nonempty open subset $U\subset\C$.

The above,
Proposition \ref{p:FJ}, and the fact that the measures
in the definition of $W$ above have full support in $J(p)$
imply
 that every $z\in J_p$ can be approximated by
a sequence $z_m\in J_p$ with 
the property that, for every $m$, we have
\[
U \subset 
\liminf_{n\to\infty} J_{z_m}(F^n).
\]
As $J_{z_m} (F^n)\subset J(F^n)$
for every $n$ and $m$,
 the assertion follows from a diagonal argument.
\end{proof}

\printbibliography

\end{document}